\renewcommand{\subsection}{\subsubsection}
\renewcommand\appendix{\par
  \setcounter{section}{0}
   \renewcommand\thesection{Appendix \Alph{section}.}
 }
\newtheorem{theorem}{Theorem}[section]
\newtheorem{lemma}{Lemma}[section]
\newtheorem{proposition}{Proposition}[section]
\newtheorem{definition}{Definition}[section]
\newtheorem{remark}{Remark}[section]
\def\nt{|\hspace{-0.7pt}|\hspace{-0.7pt}|}
\begin{document}

\title{\bf Well-posedness of the free boundary problem in compressible elastodynamics}
\author{{\bf Yuri Trakhinin}\\
Sobolev Institute of Mathematics, Koptyug av. 4, 630090 Novosibirsk, Russia\\
and\\
Novosibirsk State University, Pirogova str. 2, 630090 Novosibirsk, Russia\\
E-mail: trakhin@math.nsc.ru
}

\date{
}
%
%
\maketitle
\begin{abstract}
We study the free boundary problem for the flow of a compressible isentropic inviscid elastic fluid. At the free boundary moving with the velocity of the fluid particles the columns of the deformation gradient are tangent to the boundary and the pressure vanishes outside the flow domain. We prove the local-in-time existence of a unique smooth solution of the free boundary problem provided that among three columns of the deformation gradient there are two which are non-collinear vectors at each point of the initial free boundary. If this non-collinearity condition fails, the local-in-time existence is proved under the classical Rayleigh-Taylor sign condition satisfied at the first moment. By constructing an Hadamard-type ill-posedness example for the frozen coefficients linearized problem we show that the simultaneous failure of the non-collinearity condition and the Rayleigh-Taylor sign condition leads to Rayleigh-Taylor instability.
\end{abstract}

\section{Introduction}
\label{s1}

We consider the equations of elastodynamics governing the motion of a compressible isentropic inviscid elastic fluid \cite{Daf,Gurt,Jos}:
\begin{equation}\label{7}
\left\{
\begin{array}{l}
 \partial_t\rho  +{\rm div}\, (\rho v )=0, \\[2pt]
 \partial_t(\rho v ) +{\rm div}\,(\rho v\otimes v  ) + {\nabla}p={\rm div}\,(\rho FF^{\top}),\\[6pt]
{\displaystyle \frac{{\rm d} F}{{\rm d}t}}=\nabla v F,
\end{array}
\right.
\end{equation}
where $\rho$ denotes the density, $v\in\mathbb{R}^3$  the velocity, $F\in \mathbb{M}(3,3)$ the deformation gradient, ${\rm d} /{\rm d} t =\partial_t+({v} \cdot{\nabla} )$ the material derivative, and the pressure $p=p(\rho)$ is a smooth function of $\rho$. Moreover, system \eqref{7} is supplemented by the identity ${\rm div}\,(\rho F^{\top})=0$ which is the set of the three divergence constraints
\begin{equation}\label{8}
{\rm div}\,(\rho F_j)=0
\end{equation}
($j=1,2,3$) on initial data for the Cauchy problem, where $F_j=(F_{1j},F_{2j},F_{3j})$ is the vector field corresponding to the $j$th column of the deformation gradient, i.e., one can show that if the initial data for \eqref{7} satisfy \eqref{8}, then the divergence constraints \eqref{8} hold for all $t > 0$. The first-order system \eqref{7} written in the Eulerian coordinates describes the motion of elastic waves in a compressible material for which the Cauchy stress tensor has the form $\rho FF^{\top}$ corresponding to the elastic energy $W(F)=\frac{1}{2}|F|^2$ for the Hookean linear elasticity. At last, we note that system \eqref{7} arises as the inviscid limit of the equations of compressible viscoelasticity \cite{Daf,Gurt,Jos} of Oldroyd type \cite{Old1,Old2} (see, e.g., \cite{HuWang1,HuWang2,HuWang3,HuWang4,Qian,QianZhang,Renardy} and references therein for various aspects of analysis of these equations).

Taking into account the divergence constraints \eqref{8}, we easily symmetrize system \eqref{7} by rewriting it as
\begin{equation}
\left\{
\begin{array}{l}
{\displaystyle\frac{1}{\rho c^2}\,\frac{{\rm d} p}{{\rm d}t} +{\rm div}\,{v} =0,}\\[6pt]
{\displaystyle\rho\, \frac{{\rm d}v}{{\rm d}t}+{\nabla}p -\rho\sum_{j=1}^{3}(F_j\cdot\nabla )F_j =0 ,}\\[6pt]
\rho\,{\displaystyle \frac{{\rm d} F_j}{{\rm d}t}-\rho \,(F_j\cdot\nabla )v =0,}
\end{array}\right. \label{9}
\end{equation}
where $c^2=p'(\rho)$ is the square of the sound speed. Equations \eqref{9} form the symmetric system
\begin{equation}
\label{10}
A_0(U )\partial_tU+\sum_{k=1}^{3}A_k(U)\partial_kU=0
\end{equation}
for $U=(p,v,F_1,F_2,F_3)$, with $A_0= {\rm diag} (1/(\rho c^2) ,\rho I_{12})$ and
\[
A_k=\begin{pmatrix}
{\displaystyle\frac{v_k}{\rho c^2}} & e_k & \underline{0} & \underline{0} & \underline{0} \\[7pt]
e_k^{\top}&\rho v_kI_3 & -\rho F_{k1}I_3 & -\rho F_{k2}I_3 & -\rho F_{k3}I_3 \\[3pt]
\underline{0}^{\top} &-\rho F_{k1}I_3 & \rho v_kI_3 & O_3 & O_3 \\
\underline{0}^{\top} &-\rho F_{k2}I_3 & O_3 & \rho v_kI_3 & O_3 \\
\underline{0}^{\top} &-\rho F_{k3}I_3 & O_3 & O_3 & \rho v_kI_3
\end{pmatrix},
\]
where $e_k=(\delta_{1k},\delta_{2k},\delta_{3k})$ and $\underline{0}=(0,0,0)$. Here and below $I_m$ and $O_m$ denote the unit and zero matrices of order $m$ respectively. In \eqref{10} we think of the density as a function of the pressure: $\rho =\rho (p)$, $c^2=1/\rho'(p)$. System \eqref{10} is symmetric hyperbolic if   $A_0>0$, i.e.,
\begin{equation}
\rho >0,\quad  \rho '(p)>0. \label{11}
\end{equation}

We now consider system \eqref{7} (or \eqref{10}) in a space-time domain $\Omega (t)$ whose boundary $\Gamma (t) =\{{\eta}(t,x)=0\}$ is to be determined and moves with the velocity of the material particles at the boundary, while the pressure vanishes outside the flow domain and the columns of the deformation gradient are tangent to the free boundary:
\begin{equation}
\frac{{\rm d}{\eta} }{{\rm d} t}=0,\quad p=0, \quad F_j\cdot N=0,\quad \mbox{on}\ \Gamma (t)\label{4}
\end{equation}
(for all $t\in [0,T]$), where $N=\nabla {\eta}$. Note that the conditions $(F_j\cdot N)|_{\Gamma}=0$ coming from the constraint equations \eqref{8} are not real boundary conditions and must be regarded as restrictions (boundary constraints) on the initial data. This fact together with the fact that identities \eqref{8} are preserved in $\Omega (t)$ was proved in \cite{HaoWang} (for incompressible elastic fluids with $\rho \equiv {\rm const}$) by passing to the Lagrangian coordinates and we will later present the proof in the Eulerian coordinates (see Proposition \ref{p1}).

As for the free boundary problem for the compressible Euler equations in \cite{Lind,Tcpam}, we will assume that the hyperbolicity conditions \eqref{11} are satisfied up to the boundary $\Gamma$. Since $p|_{\Gamma}=0$, this excludes the consideration of polytropic processes, i.e., the $\gamma$-law equation of state $p=A\rho^{\gamma}$ ($A>0,\ \gamma >1$). Moreover, it follows from \eqref{11} and $p|_{\Gamma}=0$ that
\begin{equation}
\rho|_{\Gamma}=\rho (p|_{\Gamma})=\rho (0)=\bar{\rho}_0>0,\label{rho1}
\end{equation}
i.e., as in \cite{Lind}, we assume that
\begin{equation}
p(\bar{\rho}_0)=0\quad \mbox{and}\quad p'(\rho) >0,\quad \mbox{for}\ \rho \geq \bar{\rho}_0,\label{rho2}
\end{equation}
where $\bar{\rho}_0$ is a non-negative constant.

Without the deformation gradient $F$, problem \eqref{7}, \eqref{4} becomes the free boundary problem for the compressible isentropic Euler equations. Under assumptions \eqref{rho1} and \eqref{rho2} corresponding to the case of compressible liquid, by using the Lagrangian framework the local-in-time existence of smooth solutions of this problem was proved by Lindblad \cite{Lind}, provided that the Rayleigh-Taylor sign condition
\begin{equation}
\frac{\partial p}{\partial N}\leq -\epsilon <0\quad \mbox{on}\ \Gamma (0)\label{5}
\end{equation}
holds and the initial domain $\Omega (0)$ is diffeomorfic to a ball, where $\partial /\partial N=(N\cdot\nabla )$ and $\epsilon$ is a fixed constant. For an unbounded initial domain, this result was recovered by Trakhinin \cite{Tcpam} in Eulerian coordinates and extended to non-isentropic flow. For the much more complicated case $\rho|_{\Gamma}=0$ when the hyperbolic system of compressible Euler equations  degenerates on the boundary, we refer the reader to the result of Coutand and Shkoller in \cite{Cout-Shkol} where the local-in-time existence of unique smooth solutions was proved for a $\gamma$-law gas flow satisfying the so-called physical vacuum condition (see also references in \cite{Cout-Shkol} for a series of preceding results in this direction). Regarding the incompressible Euler equations, the result analogous to that in \cite{Lind} was obtained by Lindblad \cite{Lind_incomp} and the case of an unbounded initial domain was considered by P. Zhang and Z. Zhang \cite{PZZhang}. We also refer to references in \cite{Lind_incomp,PZZhang} for a huge literature about the case of irrotational flow ($\nabla\times v =0$) known as the water wave problem.

Returning to our free boundary problem \eqref{7}, \eqref{4}, we note that for the case of incompressible elastic fluid with constant density it was studied by Hao and Wang in \cite{HaoWang} where a priori estimates in Sobolev norms of solutions were derived through a geometrical point of view of Christodoulou and Lindblad \cite{ChristLind} under the fulfilment of the Rayleigh-Taylor sign condition \eqref{5}. We also note that a more complicated free boundary problem for system \eqref{7} was recently studied by Chen, Hu and Wang \cite{ChanHuWang}. Namely, they studied the free boundary problem for compressible vortex sheets for the two-dimensional version of equations \eqref{7} on the linear level of constant coefficients. Necessary and sufficient conditions for the linear stability of the rectilinear vortex sheets were found by spectral analysis and a priori $L^2$ estimates were obtained in \cite{ChanHuWang} by the Kreiss symmetrization technique \cite{Kreiss}.

It is worth noting that the well-posedness of the incompressible counterpart of problem \eqref{7}, \eqref{4} was not established in \cite{HaoWang}. In this connection, our final goal is to prove the local-in-time existence of unique smooth solutions of problem \eqref{7}, \eqref{4} under appropriate ``stability'' conditions for the initial data. The results obtained in the present paper for problem \eqref{7}, \eqref{4} stay valid for its incompressible counterpart (with some natural modifications connected with the ``ellipticity'' of the unknown $p$  for the incompressible case). We prefer, however, to restrict ourselves to the compressible case. Moreover, it seems that the free boundary problem for incompressible elastic fluids could be more effectively treated by the approach based on the idea of Wu \cite{Wu1,Wu2} of getting an evolution problem of the free surface and applied recently by Sun, Wang and Zhang for current-vortex sheets \cite{SunWangZhang1} and the plasma-vacuum problem \cite{SunWangZhang2} in ideal incompressible magnetohydrodynamics (see also \cite{PZZhang} mentioned above). At the same time, in the present paper our result connected with Rayleigh-Taylor instability detected as ill-posedness for frozen coefficients is also obtained for the incompressible case.

Stabilization effects of the elasticity were established in \cite{ChanHuWang} for vortex sheets. For problem \eqref{7}, \eqref{4}, we also show that the elasticity plays a stabilization role. Namely, we manage to prove the local-in-time existence of a unique smooth solution of our free boundary problem provided that among the three vectors $F_1$, $F_2$ and $F_3$ there are two which are non-collinear at each point of the initial free boundary, i.e.,
\begin{equation}
\label{6}
\exists\ \mu,\nu\in\{1,2,3\},\ \mu\neq \nu\ :\quad
|F_{\mu}\times F_\nu|\geq \delta >0  \quad  \mbox{on}\ \Gamma (0),
\end{equation}
where $\delta$ is a fixed constant. That is, we show that the Rayleigh-Taylor sign condition \eqref{5} is not necessary for well-posedness. However, if the {\it non-collinearity condition} \eqref{6} fails, we prove well-posedness under the classical condition \eqref{5}. Moreover, by constructing an Hadamard-type ill-posedness example for the frozen coefficients linearized problem we show that the simultaneous  failure of the non-collinearity condition and the Rayleigh-Taylor sign condition leads to Rayleigh-Taylor instability.

It should be noted that, as for the case without the deformation gradient $F$ in \cite{Lind,Tcpam}, the linear constant coefficients problem associated with problem \eqref{7}, \eqref{4} always satisfies the Kreiss-Lopatinski condition but violates the uniform Kreiss--Lopatinski condition \cite{Kreiss}. That is, the linearized problem can be well-posed only in a weak sense. This yields {\it losses of derivatives} in a priori estimates for the linearized problem. Since in the a priori estimates obtained in this paper for the linearized problem we have a fixed loss of derivatives from the source terms and with respect to the coefficients, we prove the existence of solutions to the original nonlinear problem by a suitable Nash-Moser-type iteration scheme. This scheme is similar to that in \cite{CS2,MTTcont,Tcpam} and the proof of its convergence is based on the usage of suitable tame a priori estimates in Sobolev spaces deduced for the linearized problem. The uniqueness of the solution to the nonlinear problem follows from a basic a priori estimate for the linearized problem and is proved by standard argument.

The non-collinearity condition \eqref{6} appears in our analysis as the requirement that the symbol associated to the free surface is elliptic. This means that the boundary conditions on the free boundary $F(t,x)=0$, which can be locally considered as the graph $x_1=\varphi (t,x_2,x_3)$, are resolvable for the space-time gradient $(\partial_t\varphi ,\partial_2\varphi ,\partial_3\varphi )$. The analogous non-collinearity condition for the magnetic field was introduced in \cite{Tjde} for the plasma-vacuum interface problem in ideal compressible magnetohydrodynamics whose well-posedness was proved in \cite{ST} under this condition satisfied for the initial data. Also the same non-collinearity condition for the magnetic field appears for compressible current-vortex sheets \cite{T05,T09} and ensures that the front symbol for them is elliptic.

If the front symbol is not elliptic, problem \eqref{7}, \eqref{4} is not a quite standard ``weakly stable'' hyperbolic free boundary problem. Actually, regardless of the fact that the constant coefficients problem for \eqref{7}, \eqref{4} always satisfies the weak Kreiss--Lopatinski condition, the corresponding variable coefficients problem is not unconditionally well-posed, and \eqref{5} is an {\it extra} condition which is necessary for well-posedness if \eqref{6} fails. As was already mentioned above, we prove ill-posedness under the simultaneous  failure of the non-collinearity condition and the Rayleigh-Taylor sign condition for frozen coefficients.

Although the free surface $\Gamma (t)$ is a characteristic for the symmetric hyperbolic system \eqref{10} that implies a natural loss of control on derivatives in the normal direction we manage to compensate this loss.
This is achieved for the linearized problem by estimating missing normal derivatives through equations satisfied by the linearized divergences associated with \eqref{8} and a symmetric hyperbolic system for the derivatives of the perturbations associated with  $\nabla\times {F}_j$ and the vorticity $\nabla\times v$. This is why, unlike, for example, the problems with characteristic boundary in \cite{ST,T05,T09,Tjde}, we are able to prove our a priori estimates in usual Sobolev norms of solutions.

The rest of the paper is organized as follows. In Section \ref{s2}, we reduce the free boundary problem \eqref{7}, \eqref{4} to an initial-boundary value problem in a fixed domain and discuss properties of the reduced problem. In Section \ref{s3}, we obtain the linearized problem and a corresponding frozen coefficients problem. In Section \ref{s04}, we formulate our main results which are Theorems \ref{t01} and \ref{t02} about the well-posedness in Sobolev spaces of the reduced nonlinear problem in a fixed domain under either the Rayleigh-Taylor sign condition or the non-collinearity condition satisfied by the initial data and Theorem \ref{t3} about ill-posedness under the simultaneous failure of these conditions for frozen coefficients. In Section \ref{s4} we prove the well-posedness of the linearized problem (see Theorem \ref{t1}) and in Section \ref{s4'} we derive for it the tame estimates mentioned above (see Theorems \ref{t4.1} and \ref{t4.2}). In Section \ref{s4^}, we specify compatibility conditions for the initial data and, by constructing an approximate solution, reduce the nonlinear initial-boundary value problem to that with zero initial data.  In Section \ref{s5^}, we prove Theorems \ref{t01} and \ref{t02} by solving the reduced problem by a suitable Nash-Moser-type iteration scheme. In Section \ref{s5}, we prove Theorem \ref{t3}. At last, in Section \ref{s6} we discuss open problems.

\section{Reduced problem in a fixed domain}
\label{s2}

In \cite{Tcpam} the free boundary problem for the compressible Euler equations was considered in the unbounded flow domain $\Omega (t)=\{x_1>\varphi (t,x_2,x_3)\}$ whose boundary has the form of a graph. In this case one has to introduce gravity in the equations because otherwise the Rayleigh-Taylor sign condition \eqref{5} cannot be satisfied. On the other hand, gravity plays no role in the proof of well-posedness and can be neglected as a lower-order term as was done in \cite{Lind} for the case of a bounded domain. In this paper, we prefer to do not introduce gravity. But, to avoid using local coordinate charts necessary for a bounded domain (within the framework of our approach), and for the sake of simplicity, we will just pose periodic boundary conditions in the tangential directions. More precisely, let
\[
\Omega (t)=\{x\in\mathbb{R}^3\,|\, x_1>\varphi (t,x'),\ x'=(x_2,x_3)\in \mathbb{T}^2\}
\]
be the domain occupied by the elastic fluid at time $t\in[0,T]$, where $ \mathbb{T}^2$ denotes the $2$-torus, which can be thought as the unit square with periodic boundary conditions. Then the free boundary $\Gamma (t)$ has the form
\[
\Gamma(t) = \{ (x_1,x') \in \mathbb{R} \times \mathbb{T}^2, \ x_1=\varphi(t,x')\}, \quad t \in [0,T].
\]
With our parametrization of $\Gamma (t)$, an equivalent formulation of the boundary conditions \eqref{4} at the free boundary is
\begin{equation}
\partial_t\varphi=v_N,\quad p=0, \quad F_N^j=0,\quad \mbox{on}\ \Gamma (t),\label{12}
\end{equation}
where $v_N=v\cdot N$, $F_N^j=F_j\cdot N$, $N=(1,-\partial_2\varphi ,-\partial_3\varphi )$.

We note that if instead of the Rayleigh-Taylor sign condition we assume the fulfilment of the non-collinearity condition \eqref{6}, then we may consider the unbounded free surface $\Gamma (t)=\{ x_1=\varphi(t,x')\}$ without posing periodic boundary conditions in the tangential directions.

We now reduce the free boundary problem for system \eqref{10} to that in a fixed domain.  We straighten the free surface $\Gamma$ by using the same simplest change of independent variables as in \cite{Cont1,MTTcont,T09,Tcpam,Tjde}. That is, the unknown $U$ being smooth in $\Omega (t)$ is replaced by the vector-function
\begin{equation}
\widetilde{U}(t,x ):= U (t,\Phi (t, x),x'),
\label{13}
\end{equation}
which is smooth in the fixed domain
\[
\Omega =\{x_1>0,\ x'\in \mathbb{T}^2\}
\]
with the boundary
\[
\partial\Omega =\{x_1=0,\ x'\in \mathbb{T}^2\},
\]
where
\begin{equation}
\Phi(t,x ):= x_1+\Psi(t,x ),\quad \Psi(t,x ):= \chi (x_1)\varphi (t,x'),
\label{14}
\end{equation}
and $\chi\in C^{\infty}_0(\mathbb{R})$ equals to 1 on $[0,1]$, and $\|\chi'\|_{L_{\infty}(\mathbb{R})}<1/2$. Here we use the cut-off function $\chi$ to avoid assumptions about compact support of the initial data in our future existence theorem. This change of variables is admissible if $\partial_1\Phi\neq 0$. The latter is guaranteed, namely, the inequality $\partial_1\Phi >0$ is fulfilled if we consider solutions for which $\|\varphi \|_{L_{\infty}([0,T]\times\mathbb{R}^2)}\leq 1$. The last inequality holds if, without loss of generality, we consider the initial data satisfying $\|\varphi_0\|_{L_{\infty}(\mathbb{R}^2)}\leq 1/2$, and the time $T$ in our existence theorem is sufficiently small.

\begin{remark}{\rm
Since the domain $\Omega$ is unbounded in the normal direction, smooth solutions belonging to Sobolev spaces should vanish at infinity. In particular, $p|_{x_1\rightarrow +\infty}\rightarrow 0$. In view of \eqref{rho1} and \eqref{rho2}, this means that $\rho|_{x_1\rightarrow +\infty}\rightarrow \bar{\rho}_0$. On the other hand, it is worth noting that the results of this paper stay valid if we alternatively consider the flow domain
\[
\Omega (t)=\{x\in\mathbb{R}^3\,|\, \varphi (t,x')<x_1<1,\ x'=(x_2,x_3)\in \mathbb{T}^2\}
\]
with the additional fixed boundary (rigid wall)
\[
\Sigma =\{ (1,x'),\ x'\in \mathbb{T}^2\}
\]
on which we prescribe the boundary conditions
\[
v_1=0,\quad F_{1j}=0,\quad \mbox{on}\ [0,T]\times\Sigma
\]
(one can prove that the identities $F_{1j}|_{\Sigma}=0$ are just restrictions on the initial data). Under the change of variables \eqref{13}, \eqref{14} (with such a modified cut-off function $\chi$ that $\chi (1)=0$) the above flow domain $\Omega (t)$ is transformed into the fixed bounded domain $\Omega =\{x_1\in (0,1),\ x'\in \mathbb{T}^2\}$.
\label{r1}
}\end{remark}

Making the change of variables \eqref{13}, \eqref{14} and dropping for convenience the tilde in $\widetilde{U}$, we reduce \eqref{4}, \eqref{10} to the following initial-boundary value problem in the space-time domain $[0,T]\times\Omega$:
\begin{equation}
\mathbb{L}(U,\Psi)=0\quad\mbox{in}\ [0,T]\times \Omega,\label{11.1}
\end{equation}
\begin{equation}
\mathbb{B}(U,\varphi )=0\quad\mbox{on}\ [0,T]\times\partial\Omega,\label{12.1}
\end{equation}
\begin{equation}
U|_{t=0}=U_0\quad\mbox{in}\ \Omega, \qquad \varphi |_{t=0}=\varphi_0\quad \mbox{on}\ \partial\Omega,\label{13.1}
\end{equation}
where
\[
\mathbb{L}(U,\Psi)=L(U,\Psi)U,\quad
L(U,\Psi)=A_0(U)\partial_t +\widetilde{A}_1(U,\Psi)\partial_1+A_2(U )\partial_2+A_3(U )\partial_3,
\]
\[
\widetilde{A}_1(U,\Psi )=\frac{1}{\partial_1\Phi}\left(
A_1(U )-A_0(U)\partial_t\Psi-A_2(U)\partial_2\Psi -A_3(U)\partial_3\Psi\right)
\]
($\partial_1\Phi= 1 +\partial_1\Psi$), and \eqref{12.1} is the compact form of the boundary conditions
\[
\partial_t\varphi-v_N=0,\quad p=0, \quad\mbox{on}\ [0,T]\times\partial\Omega ,
\]
with $v_N= v_1-v_2\partial_2\Psi -v_3\partial_3\Psi$. Here the identities $F^j_N|_{\partial\Omega}=0$ with $F^j_N=F_{1j}-F_{2j}\partial_2\Psi -F_{3j}\partial_3\Psi$ have not been included in the boundary conditions \eqref{12.1} because we can prove the following proposition.

\begin{proposition}
Let the initial data \eqref{13.1} satisfy
\begin{equation}
{\rm div}\,(\rho \mathcal{F}_j) =0\quad\mbox{in}\ \Omega
\label{20}
\end{equation}
and the boundary conditions
\begin{equation}
{F}_N^j =0\quad\mbox{on}\ \partial\Omega ,
\label{21}
\end{equation}
where $\mathcal{F}_j =({F}_N^j,F_{2j}\partial_1\Phi ,F_{3j}\partial_1\Phi )$. If problem \eqref{11.1}--\eqref{13.1} has a sufficiently smooth solution, then this solution satisfies \eqref{20} and \eqref{21} for all $t\in [0,T]$.
\label{p1}
\end{proposition}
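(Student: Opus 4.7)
The plan is to reduce everything to a computation in the Eulerian moving domain $\Omega(t)$ by exploiting the change of variables (13)-(14), and then exploit the two classical preservation mechanisms: a transport equation for the bulk divergence, and a pointwise material-derivative identity for the boundary constraint.

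First I would verify the Piola-type identity
\[
\mathrm{div}(\widetilde{\rho}\,\mathcal{F}_j)(t,x) \;=\; (\partial_1\Phi)(t,x)\,\bigl(\mathrm{div}(\rho F_j)\bigr)(t,\Phi(t,x),x'),
\]
which is a direct chain-rule computation using $\partial_1^{\mathrm{old}} = (\partial_1\Phi)^{-1}\partial_1$ and $\partial_k^{\mathrm{old}} = \partial_k - (\partial_k\Phi/\partial_1\Phi)\,\partial_1$ for $k=2,3$. Since $\partial_1\Phi > 0$, this shows that (20) in $\Omega$ is equivalent to the standard divergence constraint $\mathrm{div}(\rho F_j)=0$ in the moving domain $\Omega(t)$. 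Similarly, evaluating $\mathcal{F}_j$ at $x_1=0$ shows that (21) is equivalent to $F_j\cdot N = 0$ on $\Gamma(t)$. Thus it suffices to prove the Eulerian version: given a smooth solution of \eqref{7} in $\Omega(t)$ satisfying \eqref{4}, if $\mathrm{div}(\rho F_j)=0$ in $\Omega(0)$ and $F_j\cdot N=0$ on $\Gamma(0)$, then both persist for $t\in[0,T]$.

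For the volume constraint I would combine the continuity equation $\partial_t\rho + \mathrm{div}(\rho v) = 0$ with the evolution equation $\rho\, dF_j/dt = \rho(F_j\cdot\nabla)v$ to obtain
\[
\partial_t(\rho F_{ij}) + \partial_k\bigl[\rho(v_k F_{ij} - v_i F_{kj})\bigr] = -v_i\,\mathrm{div}(\rho F_j).
\]
Taking $\partial_i$ and summing, the flux tensor $\rho(v_k F_{ij}-v_i F_{kj})$ is antisymmetric in $(i,k)$, so its double divergence $\partial_i\partial_k(\cdot)$ vanishes identically. This yields the transport equation
\[
\partial_t D_j + \mathrm{div}(v D_j) = 0,\qquad D_j := \mathrm{div}(\rho F_j).
\]
Since $v$ is tangent to $\Gamma(t)$ by the first condition in \eqref{4}, the characteristics of this transport equation stay in $\Omega(t)$, so $D_j|_{t=0}=0$ implies $D_j\equiv 0$ throughout $[0,T]\times\Omega(t)$.

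For the boundary constraint I would choose the defining function $\eta$ of $\Gamma(t)$ by transport: solve $\partial_t\eta + v\cdot\nabla\eta = 0$ in all of $\Omega(t)$ with any smooth $\eta|_{t=0}$ whose zero set is $\Gamma(0)$. Differentiating gives $d(\partial_i\eta)/dt = -(\partial_i v_k)(\partial_k\eta)$, and combined with $dF_{ij}/dt = F_{kj}\partial_k v_i$, the product rule produces
\[
\frac{d}{dt}(F_j\cdot\nabla\eta)
= F_{kj}\,\partial_k v_i\,\partial_i\eta \;-\; F_{ij}\,\partial_i v_k\,\partial_k\eta
= 0
\]
after renaming indices. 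Hence $F_j\cdot N = F_j\cdot\nabla\eta$ is constant along particle trajectories, and since $\Gamma(t)$ is also the image of $\Gamma(0)$ under the particle flow, the vanishing on $\Gamma(0)$ propagates to $\Gamma(t)$. The main technical point is the Piola-type identity in the first step, which explains the precise form $\mathcal{F}_j = (F_N^j, F_{2j}\partial_1\Phi, F_{3j}\partial_1\Phi)$ chosen in the statement; everything after that is a clean transport argument relying on the antisymmetric structure of the flux of $\rho F_j$ and on $v$ being tangent to the free surface.
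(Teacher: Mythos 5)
Your proposal is correct, and the underlying mechanism is the same as the paper's (a transport equation for the divergence that needs no boundary datum because the free boundary is material, plus a transport identity for the normal trace on the boundary), but you execute it by a genuinely different route. The paper stays in the straightened variables: it derives the evolution equation \eqref{22} for $\rho F_j$ in the fixed domain, applies $\mathrm{div}$ to obtain a linear equation for $a_j=\mathrm{div}(\rho\mathcal{F}_j)/\partial_1\Phi$ that requires no boundary condition since $w_1|_{x_1=0}=0$, and then restricts \eqref{22} to $x_1=0$ to propagate \eqref{21}, referring to \cite{T09} for the details. You instead pull everything back to the moving domain via the Piola-type identity (which you verify correctly, and which indeed explains the exact form of $\mathcal{F}_j$), obtain the conservative law $\partial_tD_j+\mathrm{div}(vD_j)=0$ from the antisymmetry of the flux $\rho(v_kF_{ij}-v_iF_{kj})$, and handle the boundary constraint through the material invariance of $F_j\cdot\nabla\eta$ for a transported level-set function $\eta$; this is self-contained and arguably more geometric, whereas the paper's formulation in the straightened coordinates has the advantage that equation \eqref{22} is exactly the object reused later (boundary matrix, linearization). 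One point you should state explicitly to avoid an apparent circularity: the solution in Proposition \ref{p1} solves \eqref{11.1}, i.e.\ the straightened \emph{symmetrized} system \eqref{9}, and \eqref{9} is equivalent to \eqref{7} only modulo the constraints \eqref{8} being proved; so "a smooth solution of \eqref{7} in $\Omega(t)$" is not literally what you have. Your argument survives because it uses only the continuity equation and the equations $\mathrm{d}F_j/\mathrm{d}t=(F_j\cdot\nabla)v$, which are identical in \eqref{7} and \eqref{9}, together with the kinematic condition $\mathrm{d}\eta/\mathrm{d}t=0$ from \eqref{4} (the momentum equation, the only one that differs by terms proportional to $\mathrm{div}(\rho F_j)$, is never used); likewise "$v$ is tangent to $\Gamma(t)$" should be read as tangency of the space-time field $(1,v)$ to the lateral boundary. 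With these clarifications the proof is complete.
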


\begin{proof}
The proof that \eqref{20} and \eqref{21} are satisfied for all $t\in [0,T]$ if they are true at $t=0$ is, in fact, the same as the proof in \cite{T09} for the divergence and boundary constraints for the magnetic field. Namely, it follows from the first and the last nine equations of system \eqref{11.1} that
\begin{equation}
\partial_t(\rho F_j)+\frac{1}{\partial_1\Phi}\left\{ (w \cdot\nabla ) (\rho F_j)- (\rho\mathcal{F}_j\cdot \nabla ) v + \rho F_j{\rm div}\,u\right\} =0,
\label{22}
\end{equation}
where $u =(v_N,v_2\partial_1\Phi ,v_3\partial_1\Phi )$ and $w= u-(\partial_t\Psi ,0,0)$. In view of the first boundary condition in \eqref{12.1}, we have $w_1|_{x_1=0}=0$. For the rest of the proof we can just refer to \cite{T09}, where the magnetic field $H$ should be formally replaced with $\rho {F}_j$. Briefly speaking, applying div to a consequence of \eqref{22} gives a linear equation for $a_j={\rm div}\, (\rho \mathcal{F}_j)/\partial_1\Phi$ for given $v$ and $\varphi$. It is crucial that this linear equation (see \cite{T09}) does not need a boundary condition for $a_j$ because $w_1|_{x_1=0}=0$. Then by standard method of characteristic curves, we get \eqref{20} for all $t\in [0,T]$. At last, considering \eqref{22} at $x_1=0$ and using again the fact that $w_1|_{x_1=0}=0$, we deduce  \eqref{21} for all $t\in [0,T]$.
\end{proof}

Equations \eqref{20} are just constraints \eqref{8} written in the straightened variables. Proposition \ref{p1} stays valid if  we replace \eqref{11.1} by system \eqref{7} in the straightened variables. This means that these systems are equivalent on solutions of our free boundary problem and we may justifiably replace equations \eqref{7} by \eqref{9} under the fulfilment of the hyperbolicity conditions \eqref{11}.

Concerning the boundary conditions \eqref{21}, we must regard them as the restrictions on the initial data \eqref{13.1}. Otherwise, problem \eqref{11.1}--\eqref{13.1} does not have a correct number of boundary conditions. Indeed, the boundary matrix reads
\begin{equation}
\widetilde{A}_1(U,\Psi )=\frac{1}{\partial_1\Phi}\begin{pmatrix}
{\displaystyle\frac{w_1}{\rho c^2}} & \mathcal{N} & \underline{0} & \underline{0} & \underline{0} \\[7pt]
\mathcal{N}^{\top}&\rho w_1I_3 & -\rho F_N^1I_3 & -\rho F_N^2I_3 & -\rho F_N^3I_3 \\
\underline{0}^{\top} &-\rho F_N^1I_3 & \rho w_1I_3 & O_3 & O_3 \\
\underline{0}^{\top} &-\rho F_N^2I_3 & O_3 & \rho w_1I_3 & O_3 \\
\underline{0}^{\top} &-\rho F_N^3I_3 & O_3 & O_3 & \rho w_1I_3
\end{pmatrix},
\label{23'}
\end{equation}
where $\mathcal{N}=(1, -\partial_2\Psi, -\partial_3\Psi )$ and $w_1$ is the first component of the vector $w$ which was defined just after \eqref{22}. Since $w_1|_{x_1=0}=F_N^j|_{x_1=0}=0$ and $\mathcal{N}|_{x_1=0}=N$, we have
\begin{equation}
\widetilde{A}_1(U,\Psi )|_{x_1=0}=
\begin{pmatrix}
0 & \widetilde{N} \\
\widetilde{N}&O_{12}
\end{pmatrix},
\label{23}
\end{equation}
where  $\widetilde{N}= (N,\underline{0},\underline{0},\underline{0})$. The boundary matrix $\widetilde{A}_1$ on the boundary $x_1=0$ has the eigenvalues $\lambda_1=|N|$, $\lambda_2=-|N|$ and $\lambda_i=0$, $i=\overline{3,13}$. That is, we have one incoming characteristic and the boundary $x_1=0$ is characteristic of constant multiplicity \cite{Rauch}. Since one of the boundary conditions is needed for determining the function $\varphi $, the correct number of boundary conditions is two (that is the case in \eqref{12.1}).

\section{Linearized problem}
\label{s3}

\subsection{The basic state}

Consider
\begin{equation}
 \Omega_T:= (-\infty, T]\times\Omega,\quad \partial\Omega_T:=(-\infty ,T]\times\partial\Omega .
\label{OmegaT}
\end{equation}
Let the basic state
\begin{equation}
(\widehat{U}(t,x),\hat{\varphi}(t,{x}'))
\label{a21}
\end{equation}
upon which we perform the linearization of problem \eqref{11.1}--\eqref{13.1} be a given sufficiently smooth vector-function with $\widehat{U}=(\hat{p},\hat{v},\widehat{F}_1,\widehat{F}_2,\widehat{F}_3)$ and
\begin{equation}
\|\widehat{U}\|_{W^2_{\infty}(\Omega_T)}+\|\hat{\varphi}\|_{W^3_{\infty}(\partial\Omega_T)} \leq K,
\label{a22}
\end{equation}
where $K>0$ is a constant, and below we will also use the notations
\[
\widehat{\Phi}(t,x )= x_1 +\widehat{\Psi}(t,x ),\quad \widehat{\Psi}(t,x )=\chi(x_1)\hat{\varphi}(t,x'),
\]
i.e., all of the ``hat'' values are determined like corresponding values for $(U, \varphi)$:
\[
\hat{\rho}=\rho (\hat{p}),\quad
\hat{v}_{N}=\hat{v}_1- \hat{v}_2\partial_2\widehat{\Psi}- \hat{v}_3\partial_3\widehat{\Psi},\quad \widehat{F}_{N}^j=\widehat{F}_{1j}- \widehat{F}_{2j}\partial_2\widehat{\Psi}- \widehat{F}_{3j}\partial_3\widehat{\Psi},
 \]
etc.
Moreover, without loss of generality we assume that $\|\hat{\varphi}\|_{L_{\infty}(\partial\Omega_T)}<1$. This implies
$\partial_1\widehat{\Phi}\geq 1/2$ .

We assume that the basic state defined in ${\Omega_T}$ satisfies the hyperbolicity conditions \eqref{11},
\begin{equation}
\rho (\hat{p})\geq \bar{\rho}_0>0,\quad \rho' (\hat{p})\geq\bar{\rho}_1>0,\quad\mbox{in}\ \Omega_T,  \label{29}
\end{equation}
the first boundary condition in \eqref{12.1} together with the boundary constraints \eqref{21},
\begin{equation}
\partial_t\hat{\varphi}-\hat{v}_N=0,\quad  \widehat{F}_{N}^j=0,\quad\mbox{on}\ \partial\Omega_T ,
\label{30}
\end{equation}
and the linear equations for $F_{2j}$ and $F_{3j}$ (for a given ${v}$) contained in \eqref{11.1} and considered at $x_1=0$,
\begin{equation}
\partial_t\widehat{F}_{kj}+\hat{v}_2\partial_2\widehat{F}_{kj}+\hat{v}_3\partial_3\widehat{F}_{kj}-\widehat{F}_{2j}\partial_2\hat{v}_k-
\widehat{F}_{3j}\partial_3\hat{v}_k=0\quad (k=2,3)\quad \mbox{on}\ \partial\Omega_T,
\label{31}
\end{equation}
where $j=1,2,3$ and assumptions \eqref{30} were taken into account while writing down \eqref{31}.

\begin{remark}{\rm
For the proof of an a priori estimate for the linearized problem under the fulfilment of the non-collinearity condition \eqref{6} at $x_1=0$ by the basic state \eqref{a21} we will need linearized versions of the boundary constraints \eqref{21}. To deduce these linearized versions of \eqref{21} it is not enough that constraints \eqref{21} themselves are satisfied by the basic state ($\widehat{F}_{N}^j|_{x_1=0}=0$) and we will also need that assumption \eqref{31} holds. Note that assumptions \eqref{29}--\eqref{31} are nonlinear constraints on the basic state which are automatically satisfied if the basic state is an exact solution (unperturbed flow) of problem \eqref{11.1}--\eqref{13.1}. As in \cite{CS2,MTTcont,ST,T09,Tcpam}, the Nash-Moser procedure in Section \ref{s5^} is not completely standard. Namely, at each $n$th Nash-Moser iteration step we have to construct an intermediate state $(U_{n+1/2},\varphi_{n+1/2})$ satisfying constraints  \eqref{29}--\eqref{31}.
\label{r2}
}\end{remark}

\subsection{The linearized equations}

The linearized equations for \eqref{11.1} read:
\[
\mathbb{L}'(\widehat{U},\widehat{\Psi})(\delta U,\delta\Psi):=
\frac{d}{d\varepsilon}\mathbb{L}(U_{\varepsilon},\Psi_{\varepsilon})|_{\varepsilon =0}={f}
\quad \mbox{in}\ \Omega_T,
\]
\[
\mathbb{B}'(\widehat{U},\hat{\varphi})(\delta U,\delta \varphi):=
\frac{d}{d\varepsilon}\mathbb{B}(U_{\varepsilon},\varphi_{\varepsilon})|_{\varepsilon =0}={g}
\quad \mbox{on}\ \partial\Omega_T
\]
where $U_{\varepsilon}=\widehat{U}+ \varepsilon\,\delta U$,
$\varphi_{\varepsilon}=\hat{\varphi}+ \varepsilon\,\delta \varphi$, and
\[
\Psi_{\varepsilon}(t,{x} ):=\chi ( x_1)\varphi _{\varepsilon}(t,{x}'),\quad
\Phi_{\varepsilon}(t,{x} ):=x_1+\Psi_{\varepsilon}(t,{x} ),\quad
\]
\[
\delta\Psi(t,{x} ):=\chi ( x_1)\delta \varphi (t,{x} ).
\]
Here we introduce the source terms
\[
{f}(t,{x} )=(f_1(t,{x} ),\ldots ,f_{13}(t,{x} ))\quad \mbox{and}\quad {g}(t,{x}' )=(g_1(t,{x}' ),g_2(t,{x}' ))
\]
to make the interior equations and the boundary conditions inhomogeneous.

We easily compute the exact form of the linearized equations (below we drop $\delta$):
\[
\mathbb{L}'(\widehat{U},\widehat{\Psi})(U,\Psi)
=
L(\widehat{U},\widehat{\Psi})U +\mathcal{C}(\widehat{U},\widehat{\Psi})
U -   \bigl\{L(\widehat{U},\widehat{\Psi})\Psi\bigr\}\frac{\partial_1\widehat{U}}{\partial_1\widehat{\Phi}},
\]
\[
\mathbb{B}'(\widehat{U},\hat{\varphi})(U,\varphi )=
\left(
\begin{array}{c}
\partial_t\varphi +\hat{v}_2\partial_2\varphi+\hat{v}_3\partial_3\varphi -v_{N}\\
p
\end{array}
\right),
\]
where $v_{N}=v_1-v_2\partial_2\widehat{\Psi}-v_3\partial_3\widehat{\Psi}$, and the matrix
$\mathcal{C}(\widehat{U},\widehat{\Psi})$ is determined as follows:
\[
\mathcal{C}(\widehat{U},\widehat{\Psi})Y
= (Y ,\nabla_yA_0(\widehat{U} ))\partial_t\widehat{U}
 +(Y ,\nabla_y\widetilde{A}_1 (\widehat{U},\widehat{\Psi}))\partial_1\widehat{U}
+ \sum_{k=2}^3(Y ,\nabla_yA_k(\widehat{U} ))\partial_k\widehat{U},
\]
\[
(Y ,\nabla_y A(\widehat{U})):=\sum_{i=1}^{13}y_i\left.\left(\frac{\partial A (Y )}{
\partial y_i}\right|_{Y =\widehat{U}}\right),\quad Y =(y_1,\ldots ,y_{13}).
\]

The differential operator $\mathbb{L}'(\widehat{U},\widehat{\Psi})$ is a first order operator in
$\Psi$. This fact can give some trouble in obtaining a priori estimates for the linearized problem by the energy method. Following
\cite{Al}, we overcome this difficulty by introducing the ``good unknown''
\begin{equation}
\dot{U}:=U -\frac{\Psi}{\partial_1\widehat{\Phi}}\,\partial_1\widehat{U}.
\label{32}
\end{equation}
Omitting simple calculations, we rewrite the linearized interior equations in terms of the new unknown \eqref{32}:
\begin{equation}
L(\widehat{U},\widehat{\Psi})\dot{U} +\mathcal{C}(\widehat{U},\widehat{\Psi})
\dot{U} + \frac{\Psi}{\partial_1\widehat{\Phi}}\,\partial_1\bigl\{\mathbb{L}
(\widehat{U},\widehat{\Psi})\bigr\}=f.
\label{33}
\end{equation}
Dropping as in \cite{CS2,Cont1,MTTcont,ST,T09,Tcpam} the zero-order term in $\Psi$  in \eqref{33},\footnote{In the nonlinear analysis in Section \ref{s5^} the dropped term in \eqref{33} is considered as an error term at each Nash-Moser iteration step.} we write down the final form of our linearized problem for $(\dot{U},\varphi )$:
\begin{align}
\mathbb{L}'_{e}(\widehat{U},\widehat{\Psi})\dot{U} =f &\qquad \mbox{in}\ \Omega_T,\label{34}\\
\mathbb{B}'_{e}(\widehat{U},\hat{\varphi})(\dot{U},\varphi)=
g &\qquad \mbox{on}\ \partial\Omega_T,
\label{35}\\
 (\dot{U},\varphi )=0 &\qquad \mbox{for}\ t<0,\label{36}
\end{align}
where
\begin{align}
& \mathbb{L}'_{e}(\widehat{U},\widehat{\Psi})\dot{U}:=L(\widehat{U},\widehat{\Psi})\dot{U} +\mathcal{C}(\widehat{U},\widehat{\Psi})
\dot{U},\label{L'e}\\[6pt]
& \mathbb{B}'_{e}(\widehat{U},\hat{\varphi})(\dot{U},\varphi):=\left(
\begin{array}{c}
\partial_t\varphi +\hat{v}_2\partial_2\varphi +\hat{v}_3\partial_3\varphi -\dot{v}_{N} - \varphi \partial_1\hat{v}_N\\[3pt]
\dot{p}+ \varphi \partial_1\hat{p}
\end{array}
\right),\label{B'e}
\end{align}
and $\dot{v}_{\rm N}=\dot{v}_1-\dot{v}_2\partial_2\widehat{\Psi}-\dot{v}_3\partial_3\widehat{\Psi}$.  We assume that $f$ and $g$ vanish in the past and consider the case of zero initial data, which is the usual assumption.\footnote{The case of nonzero initial data is postponed to the nonlinear analysis (construction of a so-called approximate solution; see Section \ref{s4^}).}

\subsection{The frozen coefficients problem}

Below we will also need to consider problem \eqref{34}--\eqref{36} with frozen coefficients. More precisely, we consider it in the whole half-space $\mathbb{R}^3_+=\{ x_1>0,\ x'\in\mathbb{R}^2\}$ (without the periodicity conditions) and for all times $t>0$, introduce non-zero initial data, drop the source terms $f$ and $g$ and then freeze the coefficients at a point of the boundary $x_1=0$. For technical simplicity and without loss of generality we consider the case of a planar unperturbed free boundary by assuming that $\hat{\varphi}=0$. Moreover, we can omit the zero-order term $\mathcal{C}\dot{U}$ in \eqref{34} because its presence is not important for the process of construction of an Hadamard-type ill-posedness example (see \cite{Tcpaa}). Then, the final form of the frozen coefficients problem reads (below we drop the dots from the unknowns):
\begin{equation}\label{frozen}
\left\{
\begin{array}{ll}
\displaystyle
\frac{1}{\hat{\rho}\hat{c}^2}\,\partial_t p+{\rm div}\,v =0, &
\\[6pt]
\displaystyle
\hat{\rho}\,\partial_tv +\nabla p -\hat{\rho} \sum_{j=1}^3\mathcal{L}_jF_j=0, &
\\[12pt]
\partial_tF_j -\mathcal{L}_jv =0, &  \ \mbox{in}\ \mathbb{R}_+\times \mathbb{R}^3_+,
\end{array}
\right.
\end{equation}
\begin{equation}\label{frozen_bound}
\partial_t\varphi=v_1 +\hat{a}_0\varphi, \quad
 p=\hat{a} \varphi ,  \quad\mbox{on}\ \mathbb{R}_+\times \{x_1=0\}\times\mathbb{R}^2,
\end{equation}
with some initial data, where $\hat{c}$ is the constant sound speed, $\mathcal{L}_j=\widehat{F}_{2j}\partial_2+\widehat{F}_{3j}\partial_3$, and the coefficients $\hat{a}=-\partial_1\hat{p}$ and $\hat{a}_0=\partial_1\hat{v}_1$ are given constants. In view of assumption \eqref{30} for $\hat{\varphi}=0$, we have $\hat{v}_1=0$ and $\widehat{F}_{1j}=0$. Since $\hat{v}_2$ and $\hat{v}_3$ are now constants, we can apply a Galilean transformation so that the operator $\partial_t+\hat{v}_2\partial_2 +\hat{v}_3\partial_3$ appearing in the problem with constant coefficients becomes $\partial_t$. This is why, without loss of generality it is supposed is \eqref{frozen} that $\hat{v}_2=\hat{v}_3=0$.
Note that for $\hat{a}_0=\hat{a}=0$ problem \eqref{frozen}, \eqref{frozen_bound} is just the result of linearization of the corresponding nonlinear problem about its exact constant solution with $\widehat{F}_{1j}=0$.

\section{Main results}

\label{s04}

We are now in a position to state the main results of this paper.

\begin{theorem} Let $m\in\mathbb{N}$ and $m\geq 6$. Suppose the initial data \eqref{13.1}, with
\[
\left(U_0,\varphi_0\right)\in H^{m+15/2}(\Omega)\times H^{m+15/2}(\partial\Omega),
\]
satisfy the hyperbolicity conditions \eqref{11} and the divergence constraints \eqref{20} for all $x\in\Omega$. Let the initial data satisfy
the Rayleigh-Taylor sign condition\footnote{Assumption \eqref{RT} is just condition \eqref{5} written for the straightened unperturbed free boundary.}
\begin{equation}
\partial_1p|_{x_1=0} \geq \epsilon >0
\label{RT}
\end{equation}
and the boundary constraints \eqref{21} for all $x\in \partial\Omega$. Assume also that the initial data are compatible up to order $m+7$ in the sense of Definition \ref{d1}. Then there exists a sufficiently short time $T>0$ such that problem \eqref{11.1}--\eqref{13.1} has a unique solution
\[
\left(U,\varphi\right)\in H^{m}([0,T]\times\Omega)\times H^{m}([0,T]\times\partial\Omega).
\]
\label{t01}
\end{theorem}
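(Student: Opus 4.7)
The plan is to prove Theorem \ref{t01} by a Nash--Moser iteration scheme of the type used in \cite{CS2,MTTcont,Tcpam}, driven by the tame a priori estimates for the linearized problem \eqref{34}--\eqref{36} announced as Theorems \ref{t4.1} and \ref{t4.2}. As a preliminary reduction I would construct an \emph{approximate solution} $(U^a,\varphi^a)$, polynomial in $t$, whose Taylor expansion at $t=0$ matches that forced on any true solution of \eqref{11.1}--\eqref{13.1} up to order $m+7$. The compatibility conditions of Definition \ref{d1}, together with the Rayleigh--Taylor condition \eqref{RT}, the divergence constraints \eqref{20}, and the boundary constraints \eqref{21}, are precisely what is needed so that this expansion can be carried out consistently. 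Substituting $U=U^a+V$, $\varphi=\varphi^a+\psi$ turns \eqref{11.1}--\eqref{13.1} into a nonlinear problem with zero Cauchy data and with an interior source $f^a$ and boundary source $g^a$ that vanish to high order at $t=0$, so that they can be extended by zero to negative times and the problem can be treated on $\Omega_T$ as in Section~\ref{s3}.

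The Nash--Moser step produces a sequence $(V_n,\psi_n)=\sum_{k<n}(\delta V_k,\delta\psi_k)$ in which $(\delta V_n,\delta\psi_n)$ solves the linearized problem \eqref{34}--\eqref{36} with coefficients frozen at a \emph{modified state} $(U_{n+1/2},\varphi_{n+1/2})$ and with a smoothed right-hand side at scale $\theta_n\to\infty$. As emphasized in Remark \ref{r2}, the straightforward smoothing $\mathcal{S}_{\theta_n}(U^a+V_n,\varphi^a+\psi_n)$ does not, in general, satisfy the nonlinear constraints \eqref{29}--\eqref{31} required by the basic state, so one has to correct it: the kinematic condition $\partial_t\hat\varphi_{n+1/2}=\hat v_{n+1/2,N}$ and the boundary identities $\widehat{F}_{n+1/2,N}^j=0$ are enforced by redefining the first components of $\hat v_{n+1/2}$ and $\widehat{F}_{n+1/2}^j$ at $x_1=0$, while the tangential components of $\widehat{F}_{n+1/2}^j$ on the boundary are obtained by solving the transport equations \eqref{31} on $\partial\Omega_T$; the hyperbolicity bounds \eqref{29} are preserved by choosing $T$ and the tolerance $\theta_0$ appropriately. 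Tame estimates for this correction follow from interpolation and the smoothing properties of $\mathcal{S}_{\theta_n}$. The total error at step $n$ then splits into a quadratic error (from $\mathbb{L}$, $\mathbb{B}$), a first substitution error (from smoothing), a second substitution error (from the modification), and the error from the dropped zero-order term in \eqref{33}; each is estimated in the tame form $\lesssim \theta_n^{\ell}\|(V_n,\psi_n)\|_{\mathrm{high}}^2+\dots$ by standard Moser inequalities.

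Feeding these bounds into the linearized tame estimate with its fixed loss of derivatives, one proves by induction the usual pair of inequalities $\|(V_n,\psi_n)\|_{m}\le \varepsilon$ and $\|(V_n,\psi_n)\|_{\alpha}\le \varepsilon\theta_n^{\alpha-m}$ for $\alpha$ in an appropriate range above $m$; the data regularity $m+15/2$ and the base index $m\ge 6$ in the hypotheses are chosen precisely to absorb the loss of derivatives from source and coefficients. Summing a geometric series yields a solution $(U,\varphi)\in H^m([0,T]\times\Omega)\times H^m([0,T]\times\partial\Omega)$ of \eqref{11.1}--\eqref{13.1} on a short interval $[0,T]$, provided $T$ is chosen so small that \eqref{RT}, \eqref{29} and the smallness $\|\varphi\|_{L^\infty}\le 1$ persist. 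Uniqueness is obtained by subtracting two solutions, regarding the difference as a ``good unknown'' perturbation in the sense of \eqref{32} of one around the other, and applying the basic $L^2$ estimate for the linearized problem \eqref{34}--\eqref{36}. The main obstacle, in my view, is not the Nash--Moser bookkeeping but the tame a priori estimates themselves: the boundary $x_1=0$ is characteristic (see \eqref{23}) so the trace of the full vector $\dot U$ cannot be controlled directly, and the missing normal derivatives must be recovered as outlined at the end of Section \ref{s1} through the linearized versions of the divergence constraints \eqref{20} and a symmetric hyperbolic subsystem for the curls of $F_j$ and $v$; the Rayleigh--Taylor condition \eqref{RT} enters as the positivity needed to close the boundary energy estimate for $\varphi$ and the trace of $\dot p$.
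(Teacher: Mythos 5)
Your plan follows essentially the same route as the paper: construct an approximate solution matching the Taylor expansion at $t=0$ up to order $m+7$ (Lemmata \ref{l4.1} and \ref{l2}), reduce to a problem on $\Omega_T$ with zero data and source $f^a$ vanishing in the past, run a Nash--Moser scheme whose intermediate states are modified to satisfy the constraints \eqref{29}--\eqref{31}, close the induction with the tame estimate of Theorem \ref{t4.1} (the paper then simply refers to the identical scheme in \cite{Tcpam} for this Rayleigh--Taylor case), and obtain uniqueness from the basic linearized estimates. The only cosmetic differences (an approximate solution satisfying the boundary conditions exactly rather than producing a boundary source $g^a$, and the fact that assumption \eqref{31} on the modified state is only needed for the non-collinearity case) do not change the argument.
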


\begin{theorem}
Let $m\in\mathbb{N}$ and $m\geq 8$. Suppose the initial data \eqref{13.1}, with
\[
\left(U_0,\varphi_0\right)\in H^{m+15/2}(\Omega)\times H^{m+15/2}(\partial\Omega),
\]
satisfy the hyperbolicity conditions \eqref{11} and the divergence constraints \eqref{20} for all $x\in\Omega$. Let the initial data satisfy
the non-collinearity condition  (cf. \eqref{6})
\begin{equation}
\label{NC}
\exists\ \mu,\nu\in\{1,2,3\},\ \mu\neq \nu\ :\quad
|{F}_\mu\times {F}_\nu |\geq \delta >0  \quad  \mbox{at}\ x_1=0
\end{equation}
and the boundary constraints \eqref{21} for all $x\in \partial\Omega$. Assume also that the initial data are compatible up to order $m+7$ in the sense of Definition \ref{d1}. Then there exists a sufficiently short time $T>0$ such that problem \eqref{11.1}--\eqref{13.1} has a unique solution
\[
\left(U,\varphi\right)\in H^{m}([0,T]\times\Omega)\times H^{m}([0,T]\times\partial\Omega).
\]
\label{t02}
\end{theorem}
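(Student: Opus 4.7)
The plan is to prove Theorem \ref{t02} by a Nash-Moser iteration driven by tame a priori estimates for the linearized problem \eqref{34}--\eqref{36}, with the non-collinearity condition \eqref{NC} playing the role of the missing Rayleigh-Taylor sign condition. The overall structure parallels \cite{CS2,MTTcont,ST,T09,Tcpam}: I would first derive an $L^2$ well-posedness estimate for the linearized problem under \eqref{NC}, then upgrade it to higher Sobolev tame estimates in the spirit of Theorems \ref{t4.1}--\ref{t4.2}, then construct an approximate solution that carries the initial data and compatibility conditions, and finally run the Nash-Moser scheme on the resulting problem with zero initial data.

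The decisive step of the linear analysis is to view the front $\varphi$ as an \emph{elliptic} unknown via \eqref{NC}. Linearizing the boundary constraints $F_N^j=0$ at a basic state with $\widehat{F}_N^j|_{x_1=0}=0$ and using \eqref{31}, one obtains, for each $j=1,2,3$,
\[
\widehat{F}_{2j}\,\partial_2\varphi + \widehat{F}_{3j}\,\partial_3\varphi = \dot{F}_N^j + \mbox{l.o.t.}(\varphi) \qquad\mbox{on}\ \partial\Omega_T.
\]
Under \eqref{NC} two of the pairs $(\widehat{F}_{2j},\widehat{F}_{3j})$ form a basis of $\mathbb{R}^2$ with a uniform lower bound proportional to $\delta$, so this $2\times 2$ algebraic map is invertible and $\nabla'\varphi$ is controlled pointwise by the boundary traces of $\dot{F}_N^j$ and $\varphi$. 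Combined with the first linearized boundary condition in \eqref{B'e}, which delivers $\partial_t\varphi$, this yields an $H^1_{t,x'}$-estimate of $\varphi$ that is independent of the sign of $\partial_1\hat p$. This is the precise mechanism by which elasticity removes the need for \eqref{5}. Tangential derivatives of $\dot U$ are then controlled by a standard symmetric-hyperbolic energy estimate, picking up the boundary energy of $\varphi$ via \eqref{B'e} and \eqref{23}. Since the boundary matrix \eqref{23} is characteristic, normal derivatives must be recovered separately: the quantities $\nabla\times\dot v$, $\nabla\times\dot F_j$, ${\rm div}(\hat\rho\,\dot{\mathcal F}_j)$, and $\partial_1\dot p$ satisfy symmetric hyperbolic systems derived from \eqref{11.1} and \eqref{20} which are non-characteristic at $x_1=0$ and therefore furnish $\partial_1\dot U$ in Sobolev norms. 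Iterating this device produces $H^s$ estimates with a fixed finite loss in both source terms and coefficients, and a standard Moser/interpolation calculus turns them into tame estimates linear in the highest norm of $(\widehat U,\hat\varphi)$.

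With the linear theory in hand, the nonlinear existence follows in three stages, as in Sections \ref{s4^}--\ref{s5^}. I would verify the compatibility conditions of Definition \ref{d1}, build an approximate solution $(U^a,\varphi^a)$ annihilating \eqref{11.1}--\eqref{12.1} to order $m+7$ at $t=0$, and reduce Theorem \ref{t02} to finding a perturbation with zero initial data and source terms flat at $t=0$. A Nash-Moser iteration with dyadic smoothing operators is then deployed; as explained in Remark \ref{r2}, at each step $n$ the approximate state $(U_n,\varphi_n)$ must be replaced by an intermediate state $(U_{n+1/2},\varphi_{n+1/2})$ satisfying the nonlinear constraints \eqref{29}--\eqref{31} so that the tame linearized estimate applies. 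Uniqueness follows from the basic $L^2$ linearized estimate applied to the difference of two solutions linearized around a convex combination of them.

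The main obstacle I expect is ensuring that the non-collinearity bound $\delta$ in \eqref{NC} is preserved uniformly along the iteration. Because \eqref{NC} is used to invert an algebraic $2\times 2$ map at the boundary, a small perturbation of the coefficients in a low Sobolev norm must keep \eqref{NC} with a uniform lower bound, and the smoothing together with the intermediate-state correction must not destroy this. Building $(U_{n+1/2},\varphi_{n+1/2})$ so that \eqref{29}--\eqref{31} and a uniform version of \eqref{NC} all hold at once is the most delicate point; once it is secured, the Nash-Moser convergence reduces to a standard, though technically involved, loss-of-derivatives book-keeping.
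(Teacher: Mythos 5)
Your overall plan coincides with the paper's: exploit \eqref{NC} to make the front symbol elliptic, resolve $\nabla_{t,x'}\varphi$ from the linearized boundary constraints (which is exactly Proposition \ref{p2} and system \eqref{73}--\eqref{74}, and is precisely why assumption \eqref{31} is imposed on the basic state), control tangential derivatives by symmetric-hyperbolic energy estimates, recover normal derivatives from the linearized divergences and the vorticity-type system \eqref{68}--\eqref{70} together with the equation for the noncharacteristic part $U_n$, then tame estimates, approximate solution, and Nash--Moser with intermediate states satisfying \eqref{29}--\eqref{31}. However, there are two concrete gaps. First, your opening step --- ``derive an $L^2$ well-posedness estimate for the linearized problem under \eqref{NC}'' --- would fail: under the non-collinearity condition alone no $L^2$ estimate is available (the paper states this explicitly and only obtains \eqref{43'} under the Rayleigh--Taylor condition \eqref{42}). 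The reason is that the elliptic resolution \eqref{74} involves $\nabla_{t,x'}\varphi$ and the boundary traces of $F_N^{\mu},F_N^{\nu}$ and of the remainder $R_j$ of \eqref{59} (which is not merely a lower-order term in $\varphi$ but solves the transport equation \eqref{61} sourced by $\mathfrak{f}$, see \eqref{60}); consequently the energy argument must start at the level of first-order tangential derivatives, the offending boundary terms \eqref{bterms} must be handled by passing to volume integrals and integrating by parts, and the resulting estimate \eqref{41} unavoidably loses derivatives ($f\in H^{3/2}$, $g\in H^{2}$ for an $H^1$ solution). You do acknowledge a ``fixed finite loss'' later, but the claimed $L^2$ starting point is not a viable step.

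Second, you give no mechanism for the \emph{existence} of solutions of the linearized problem under \eqref{NC}; a priori estimates with a loss of derivatives do not by themselves yield solvability, and the duality route used in the Rayleigh--Taylor case is unavailable here. The paper supplies this through a separate fixed-point construction (Lemma \ref{lem1} and Theorem \ref{t4}): freeze $\overline{\varphi}$ in the condition $p=-\overline{\varphi}\,\partial_1\hat p$, observe that this boundary condition is maximally nonnegative for the characteristic-boundary symmetric system so that the theory of \cite{Sec96} applies, then update $\varphi$ from the kinematic condition and the resolved front equation \eqref{74}, and close by the contraction mapping principle for small $T$ (this is also where the short-time restriction in the linear theory originates, cf.\ Remark \ref{r3.1}). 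A minor further slip: the div/curl systems \eqref{68}--\eqref{70} are not ``non-characteristic at $x_1=0$''; they are transport-type equations whose normal transport speed vanishes on the boundary, so they require no boundary conditions --- that, rather than noncharacteristicity, is what lets them deliver the missing normal derivatives. Your concern about propagating a uniform version of \eqref{NC} along the iteration is legitimate but mild: since \eqref{NC}, unlike \eqref{30}--\eqref{31}, is an open condition, it persists for the modified states for small $T$ and small $\delta$, as in Proposition \ref{p3}.
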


There appears the natural question: What happens if both the non-collinearity and Rayleigh-Taylor sign conditions fail in some points/regions of the initial free boundary? Our hypothesis is that the free boundary problem is not well-posed in this case. However, it is rather difficult and not really necessary to show this on the original nonlinear level and we restrict ourselves to the consideration of the linearized problem.\footnote{We know only one example \cite{Ebin} of the justification of the ill-posedness of a similar but much simpler nonlinear free boundary problem in fluid dynamics. This is the free boundary problem for the incompressible Euler equations with a vacuum boundary condition \cite{Lind_incomp}.}

A direct proof that problem \eqref{34}--\eqref{36} is ill-posed under the simultaneous failure of conditions \eqref{RT} and \eqref{NC} for the basic state is still difficult. On the other hand, the ill-posedness of the corresponding frozen coefficients problem indirectly points out that the variable coefficients problem cannot be well-posed. In fact, in our case this also points out to Rayleigh-Taylor instability.

\begin{theorem}
The frozen coefficients problem \eqref{frozen}, \eqref{frozen_bound} is ill-posed if and only if the three constant vectors $\widehat{F}_j=(0,\widehat{F}_{2j},\widehat{F}_{3j})$, $j=1,2,3$, are collinear,
\begin{equation}
\widehat{F}_{21}\widehat{F}_{32}-\widehat{F}_{31}\widehat{F}_{22}=\widehat{F}_{21}\widehat{F}_{33}-\widehat{F}_{31}\widehat{F}_{23}=
\widehat{F}_{22}\widehat{F}_{33}-\widehat{F}_{32}\widehat{F}_{23}=0,
\label{collin-fr}
\end{equation}
and the constant (frozen) coefficient $\partial_1\hat{p}$ is negative (i.e. when the Rayleigh-Taylor sign condition fails):
\begin{equation}
\partial_1\hat{p}<0.
\label{antiRT}
\end{equation}
\label{t3}
\end{theorem}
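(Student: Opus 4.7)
I would prove Theorem \ref{t3} by a Fourier--Laplace normal-mode analysis of \eqref{frozen}, \eqref{frozen_bound}, deriving an explicit Lopatinski determinant and separately analyzing its zeros in the two implications.

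\emph{Setup.} Seek $(U,\varphi)=e^{\tau t+i\eta\cdot x'}(\tilde U(x_1),\tilde\varphi)$ with $\operatorname{Re}\tau>0$ and $\tilde U\in L^2(\mathbb R_+)$. The third equation in \eqref{frozen} gives $\tilde F_j=(im_j/\tau)\tilde v$, where $m_j:=\widehat F_{2j}\eta_2+\widehat F_{3j}\eta_3$; substituting into the momentum equation reduces it to $\hat\rho\mu\,\tilde v=-\tilde\nabla\tilde p$ with $\mu:=\tau+M/\tau$, $M:=\sum_{j} m_j^2$, and $\tilde\nabla:=(\partial_{x_1},i\eta_2,i\eta_3)$. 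Eliminating $\tilde v$ from the continuity equation yields the scalar ODE
\[
\tilde p''=\omega^2\tilde p, \qquad \omega^2:=\frac{\tau^2+M}{\hat c^2}+|\eta|^2,
\]
whose $L^2$ solution is $\tilde p(x_1)=P_0 e^{-\omega x_1}$ on the branch $\operatorname{Re}\omega>0$. The boundary conditions \eqref{frozen_bound} impose $P_0=\hat a\tilde\varphi$ and $(\tau-\hat a_0)\tilde\varphi=\omega P_0/(\hat\rho\mu)$, producing the Lopatinski determinant
\[
\mathcal L(\tau,\eta):=\hat\rho\mu(\tau-\hat a_0)-\omega\hat a.
\]
Hadamard ill-posedness reduces to the existence of a sequence of zeros of $\mathcal L$ with $\operatorname{Re}\tau_n\to+\infty$ faster than any polynomial in $|\eta_n|$, so that no Sobolev-to-Sobolev continuous dependence can hold.

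\emph{Sufficiency.} Assuming \eqref{collin-fr}, the three vectors $(\widehat F_{2j},\widehat F_{3j})$ span a line in $\mathbb R^2$, so one can pick $\eta\neq 0$ orthogonal to that line, killing $M$ and reducing the determinant to $\hat\rho\tau^2-\omega\hat a$ (the lower-order term $\hat a_0$ being a harmless perturbation), with $\omega^2=\tau^2/\hat c^2+|\eta|^2$. Under \eqref{antiRT} one has $\hat a=-\partial_1\hat p>0$; squaring yields a quadratic in $\tau^2$ whose positive root gives an explicit real $\tau(\eta)\sim\sqrt{\hat a|\eta|/\hat\rho}\to\infty$, with both sides of the unsquared relation remaining positive (so no spurious root is introduced). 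The normalized mode $e^{\tau(\eta_n)t+i\eta_n\cdot x'-\omega(\eta_n)x_1}$ with $|\eta_n|\to\infty$ provides the Hadamard counterexample in every Sobolev scale.

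\emph{Necessity.} When $\hat a\leq 0$, on the branch $\operatorname{Re}\omega>0$ the term $\omega\hat a$ has non-positive real part while $\hat\rho\mu(\tau-\hat a_0)$ has strictly positive real part for large $\operatorname{Re}\tau$, excluding unbounded unstable roots. When the non-collinearity condition holds, the matrix $A\in\mathbb R^{3\times 2}$ with rows $(\widehat F_{2j},\widehat F_{3j})$ has rank two, hence $M(\eta)\geq c|\eta|^2$; a size comparison of the two sides of $\hat\rho^2\mu^2(\tau-\hat a_0)^2=\omega^2\hat a^2$ in the regimes $|\tau|\gg|\eta|$, $|\tau|\sim|\eta|$ and $|\tau|\ll|\eta|$ forces any root to satisfy $|\tau|=O(|\eta|)$, which rules out Hadamard growth.

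\emph{Main obstacle.} The delicate part is the necessity direction: rigorously excluding complex zeros of $\mathcal L$ with $\operatorname{Re}\tau$ unbounded requires careful tracking of the branch of $\omega$ and of the pole of $\mu$ at $\tau=0$, together with control of the lower-order perturbation $\hat a_0$. The sufficiency direction is essentially algebraic once $M$ is forced to vanish, and the construction of the Hadamard sequence is standard thereafter.
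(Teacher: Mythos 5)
Your setup and your sufficiency argument coincide with the paper's own proof. The paper performs exactly the same normal-mode analysis: eliminating $F_j$, solving the resulting ODE in $x_1$ with the decaying branch, and deriving the boundary determinant; your $\mathcal L(\tau,\eta)=\hat\rho\mu(\tau-\hat a_0)-\omega\hat a$ becomes, after multiplying by $\tau$ and rescaling $\tau=ns$, $\eta=n\omega'$ with $|\omega'|=1$, precisely equation \eqref{100}. Under \eqref{collin-fr} the choice of $\eta$ annihilating $M$ and, under \eqref{antiRT} (i.e. $\hat a>0$), the root $\tau(\eta)\sim\sqrt{\hat a|\eta|/\hat\rho}$ is the paper's Rayleigh--Taylor mode $s=\sqrt{\hat a/\hat\rho}\,n^{-1/2}+\mathcal O(1/n)$, where the expansion in $1/\sqrt n$ also substantiates your remark that $\hat a_0$ is a harmless perturbation. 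So the ``if'' direction is essentially the paper's argument.

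The necessity direction, however, has a genuine gap. Your concluding criterion in the non-collinear case is wrong: a bound $|\tau|=O(|\eta|)$ does not rule out Hadamard ill-posedness --- the unstable modes you construct in the sufficiency part themselves satisfy $|\tau|\sim|\eta|^{1/2}=O(|\eta|)$. What must be shown is that every zero of $\mathcal L$ with $\Re\tau>0$ has $\Re\tau$ bounded (at most logarithmic in $|\eta|$) uniformly in $\eta$; the paper obtains this by locating the three roots of \eqref{100} near $0$ and $\pm i\hat r$ and showing $\Re s=O(1/n)$, i.e. $\Re\tau=O(1)$, using that rank two gives $\hat r\geq c>0$ for all unit $\omega'$. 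A size comparison in the regimes $|\tau|\gg|\eta|$, $|\tau|\sim|\eta|$, $|\tau|\ll|\eta|$ cannot distinguish the well-posed from the ill-posed situation. Likewise, your argument for $\hat a\le 0$ is not valid as stated: $\Re\mu>0$ and $\Re(\tau-\hat a_0)>0$ do not imply $\Re\bigl(\hat\rho\mu(\tau-\hat a_0)\bigr)>0$ (take $|\Im\tau|\gg\Re\tau$, so that $\Re\bigl(\mu(\tau-\hat a_0)\bigr)\approx\Re(\tau^2)<0$), so sign considerations alone do not exclude unstable roots, and the collinear case with $\hat r=0$, $\hat a\le0$ (including $\hat a=0$, where the root $s=\hat a_0/n$ appears) still requires the root expansion of \eqref{101} to verify that only bounded growth rates occur. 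Finally, your formulation of the ill-posedness test (``$\Re\tau_n\to+\infty$ faster than any polynomial in $|\eta_n|$'') is inconsistent with your own example, where $\Re\tau_n\sim|\eta_n|^{1/2}$; the correct requirement is $\Re\tau_n/\log|\eta_n|\to\infty$, which that example does satisfy.
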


\section{Well-posedness of the linearized problem}
\label{s4}

\setcounter{subsubsection}{0}

\subsection{Main theorem for the linearized problem}

The result on the well-posedness of the linearized problem stated below in Theorem \ref{t1} will be used towards the proof of Theorems \ref{t01} and \ref{t02} for the original nonlinear problem.

\begin{theorem}
Let the basic state \eqref{a21} satisfies assumptions \eqref{29}, \eqref{30}, the non-collinearity condition
\begin{equation}
\label{40}
\exists\ \mu,\nu\in\{1,2,3\},\ \mu\neq \nu\ :\quad
|\widehat{F}_\mu\times \widehat{F}_\nu |\geq \delta >0  \quad  \mbox{on}\ \partial\Omega_T,
\end{equation}
and assumption \eqref{31} for $j=\mu$ and $j=\nu $, where $\mu$ and $\nu$ are taken from \eqref{40}.
Then, for all $(f,g) \in H^{3/2}(\Omega_T)\times H^{2}(\partial\Omega_T)$  vanishing in the past problem \eqref{34}--\eqref{36} has a unique solution $(\dot{U},\varphi )\in H^1(\Omega_T)\times H^1(\partial\Omega_T)$ for a sufficiently short time $T$. Moreover, this solution obeys the a priori estimate
\begin{equation}
\|\dot{U} \|_{H^{1}(\Omega_T)}+\|\varphi\|_{H^1(\partial\Omega_T)} \leq C\left\{\|f \|_{H^{3/2}(\Omega_T)}+ \|g\|_{H^{2}(\partial\Omega_T)}\right\},
\label{41}
\end{equation}
where $C=C(K,\bar{\rho}_0,\bar{\rho}_1,\delta ,T)>0$ is a constant independent of the data $f$ and $g$.

Let the basic state \eqref{a21} satisfies assumptions \eqref{29}, \eqref{30} and the Rayleigh-Taylor sign condition
\begin{equation}
\partial_1\hat{p} \geq \epsilon >0 \quad  \mbox{on}\ \partial\Omega_T.
\label{42}
\end{equation}
Then, for all $(f,g) \in H^1(\Omega_T)\times H^{3/2}(\partial\Omega_T)$ vanishing in the past problem \eqref{34}--\eqref{36} has a unique solution $(\dot{U},\varphi )\in H^1(\Omega_T)\times H^1(\partial\Omega_T)$. This solution obeys the a priori estimate
\begin{equation}
\|\dot{U} \|_{H^{1}(\Omega_T)}+\|\varphi\|_{H^1(\partial\Omega_T)} \leq C\left\{\|f \|_{H^{1}(\Omega_T)}+ \|g\|_{H^{3/2}(\partial\Omega_T)}\right\},
\label{43}
\end{equation}
where $C=C(K,\bar{\rho}_0,\bar{\rho}_1,\epsilon ,T)>0$ is a constant independent of the data $f$ and $g$.
\label{t1}
\end{theorem}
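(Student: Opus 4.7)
\medskip
\noindent\textbf{Proof plan for Theorem \ref{t1}.}

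The plan is to derive the two a priori estimates \eqref{41} and \eqref{43} by an energy method adapted to the characteristic boundary, and then obtain existence by a duality/regularization argument together with the fact that the adjoint problem has the same structural form. Uniqueness will follow directly from the basic estimate applied to differences. Throughout I will work with the good unknown $\dot{U}$, for which the nonlinear differential term in $\Psi$ is absent, and will treat the coefficients as frozen in time in the sense of Gronwall so that $T$ can be taken small.

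\medskip
\noindent\textbf{Step 1: $L^2$ energy identity and the boundary quadratic form.} Multiplying \eqref{34} by $\dot U$ and integrating over $\Omega_t$, the symmetry of $A_0,\widetilde{A}_1,A_2,A_3$ together with \eqref{a22} and \eqref{29} yields
\[
\|\dot U(t)\|^2_{L^2(\Omega)} \;\le\; C\!\!\int_0^t\!\!\|\dot U\|^2_{L^2(\Omega)}\,ds \;+\; C\|f\|^2_{L^2(\Omega_t)} \;-\; \int_{\partial\Omega_t}\!\!\bigl(\widetilde{A}_1(\widehat U,\widehat\Psi)\dot U,\dot U\bigr)\,ds\,dx',
\]
where by \eqref{23} the boundary quadratic form reduces to $2\dot p\,\dot v_N$. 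Using the second boundary condition in \eqref{35}, $\dot p=g_2-\varphi\,\partial_1\hat p$, and the first one, $\dot v_N=\partial_t\varphi+\hat v_2\partial_2\varphi+\hat v_3\partial_3\varphi+\varphi\,\partial_1\hat v_N-g_1$, the boundary integrand becomes, up to harmless lower-order and source terms,
\[
-\,2(\partial_1\hat p)\,\varphi\bigl(\partial_t\varphi+\hat v_2\partial_2\varphi+\hat v_3\partial_3\varphi\bigr),
\]
which under \eqref{42} gives $(\partial_1\hat p)(\partial_t\varphi^2+\mathrm{div}'(\hat v'\varphi^2))+\cdots$, so after integration by parts in $x'$ the Rayleigh-Taylor sign condition produces a coercive term $\epsilon\|\varphi(t)\|_{L^2(\partial\Omega)}^2$. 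A Gronwall argument then yields the $L^2$ part of \eqref{43}, with only a $1/2$-derivative loss coming from trace inequalities applied to $g$.

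\medskip
\noindent\textbf{Step 2: The non-collinear case — an elliptic front symbol.} When \eqref{42} is not assumed, the boundary term above is not sign-definite. Instead, I will extract $\varphi$ from the linearized boundary constraints. Linearizing $F_N^j|_{x_1=0}=0$ (which is propagated by Proposition \ref{p1}) and using \eqref{30}, \eqref{31} with $j=\mu,\nu$, one obtains two boundary identities of the schematic form
\[
\widehat F_\mu\cdot\nabla'\varphi \;=\; (\dot F_1^\mu\text{ and tangential terms on }\partial\Omega),\qquad \widehat F_\nu\cdot\nabla'\varphi \;=\;(\text{idem}),
\]
where $\nabla'=(\partial_2,\partial_3)$. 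The non-collinearity \eqref{40} makes the $2\times2$ matrix with rows $\widehat F_\mu$, $\widehat F_\nu$ (the relevant two-component projection) invertible on $\partial\Omega_T$, so $\nabla'\varphi$ is expressed as a linear combination of traces of $\dot U$ plus sources. This is exactly the ellipticity of the front symbol alluded to after Remark \ref{r2}. Combined with the first boundary condition in \eqref{35}, it gives $\|\varphi\|_{H^1(\partial\Omega_t)}\lesssim \|\dot U\|_{L^2(\partial\Omega_t)}+\|g\|_{H^1(\partial\Omega_t)}$, which closes the estimate via the trace theorem and Step 1 (with the uncontrolled boundary term now absorbed into $\|\dot U\|_{L^2(\partial\Omega_t)}$ using a tangential $H^{1/2}$ estimate). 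The $1/2$-derivative loss on $f$ in \eqref{41} comes from needing $H^1$ boundary traces of $\dot U$, which, because the boundary is characteristic, force an $H^{3/2}(\Omega_T)$ control of $\dot U$ on the noncharacteristic components — hence the $H^{3/2}$ norm of $f$ on the right-hand side.

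\medskip
\noindent\textbf{Step 3: Recovery of normal derivatives via divergence/curl.} The characteristic nature of $\widetilde{A}_1$ on $\{x_1=0\}$ means a direct energy estimate only controls tangential derivatives. To upgrade to full $H^1(\Omega_T)$, I will use the linearized divergence constraints (derived from Proposition \ref{p1} applied to the linearization) and the symmetric hyperbolic system satisfied by $\mathrm{curl}\,\dot v$ and $\mathrm{curl}\,\dot F_j$, exactly as advertised in the introduction. These give algebraic+transport control of $\partial_1\dot v_1$ and $\partial_1\dot F_{1j}$ in terms of tangential derivatives, thereby closing the $H^1$ estimate without additional boundary conditions.

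\medskip
\noindent\textbf{Step 4: Existence.} Having established \eqref{41} and \eqref{43} for smooth $(\dot U,\varphi)$, I will construct solutions via a classical duality argument: the adjoint of $(\mathbb L'_e,\mathbb B'_e)$ has the same structure (symmetric principal part, same number of incoming characteristics on $\{x_1=0\}$) and therefore satisfies an analogous estimate; this provides existence of weak solutions, and the estimates then yield the claimed regularity. Alternatively, one can regularize by adding a small $-\varepsilon\Delta$ viscous term, solve by standard parabolic theory, and pass to the limit using the estimates, which are uniform in $\varepsilon$.

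\medskip
\noindent\textbf{Main obstacle.} The delicate point is Step 2: producing the elliptic estimate for $\varphi$ in the non-collinear case requires that the linearized boundary constraints $\dot F_N^j|_{x_1=0}=0$ actually hold for the solution of \eqref{34}--\eqref{36}, and for this I need both that the basic state satisfies \eqref{30} (so that the perturbed constraints close) and that \eqref{31} holds for $j=\mu,\nu$ (so that the propagated constraint yields exactly the identity $\widehat F_\mu\cdot\nabla'\varphi=\cdots$ and not a spurious term). Verifying this propagation for the good unknown, and checking that the resulting $2\times 2$ system is invertible with constants depending only on $\delta$ in \eqref{40}, is the technical heart of the argument.
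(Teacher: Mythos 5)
Your Steps 1--3 follow essentially the same route as the paper: $L^2$ energy identity with the Rayleigh--Taylor term made coercive after integration by parts in $(t,x')$; in the non-collinear case, resolution of the front symbol by combining the first boundary condition with the linearized boundary constraints derived from \eqref{30}, \eqref{31} for $j=\mu,\nu$ (this is Proposition \ref{p2} and formulas \eqref{59}, \eqref{73}, \eqref{74} in the paper, where the $1/2$-derivative loss in \eqref{41} enters through the estimate \eqref{60} of the error $R_j$, which needs the trace of $\mathfrak f$); and recovery of the missing normal derivatives from the linearized divergences and the curl system, exactly as in \eqref{68}--\eqref{70}. One technical caveat in your Step 2: you propose to absorb the boundary terms using boundary traces of $\dot U$ in $L^2(\partial\Omega_t)$, but the boundary is characteristic, so only the noncharacteristic components $(p,v_N)$ and the combinations $F_N^j$ have usable traces; the paper instead converts the boundary integrals into volume integrals and integrates by parts, which is what actually closes the $H^1$ estimate.

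The genuine gap is your Step 4 in the non-collinear case. A duality argument presupposes an a priori estimate \emph{without loss of derivatives} for the (direct or adjoint) problem, and under \eqref{40} no such estimate is available: the only estimate you (and the paper) obtain is \eqref{41}, with a fixed loss from $f$ and $g$, so the weak-solution construction by duality does not go through (duality is used in the paper only under the Rayleigh--Taylor condition \eqref{42}, where the lossless $L^2$ estimate \eqref{43'} holds). Likewise, adding $-\varepsilon\Delta$ changes the number and type of boundary conditions and there is no obvious way to get $\varepsilon$-uniform bounds. The paper's existence proof under \eqref{40} is different in kind: it decouples the problem by treating $\varphi$ in the boundary condition $p=-\overline\varphi\,\partial_1\hat p$ as given, observes that this boundary condition is maximally nonnegative for the symmetric hyperbolic system with characteristic boundary of constant multiplicity (so Secchi's existence theory applies, Lemma \ref{lem1}), then recovers $\varphi$ from the transport equation \eqref{85} and boosts it to $H^{3/2}(\partial\Omega_T)$ via the front-symbol relation \eqref{74}, and finally shows that the map $\overline\varphi\mapsto\varphi$ is a contraction for $T$ small (Theorem \ref{t4}). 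This fixed-point mechanism is precisely why Theorem \ref{t1} asserts existence under \eqref{40} only for a sufficiently short time $T$ (Remark \ref{r3.1}), a feature your proposal neither produces nor explains; without replacing your duality/viscosity step by an argument of this type (or some other scheme compatible with the derivative loss, e.g.\ Nash--Moser at the linear level), the existence claim in the non-collinear half of the theorem is unproved.
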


\begin{remark}{\rm
Under the fulfilment of the non-collinearity condition \eqref{40} we prove the existence of solutions of problem \eqref{34}--\eqref{36} for a sufficiently short time $T$ whereas we do not assume that $T$ is small for the case when the Rayleigh-Taylor sign condition \eqref{42} holds. The point is that for the first case our proof of existence is based on a fixed-point argument and we manage to use the contraction mapping principle for a sufficiently short time $T$. Probably one could try to prove existence by another method without the short-time assumption but it is not really necessary because our final goal is the proof of the existence of a unique smooth solution of the original nonlinear problem on a small time interval $[0,T]$. Moreover, for both cases \eqref{40} and \eqref{42} the so-called tame a priori estimate for the linearized problem \eqref{34}--\eqref{36} (see Section \ref{s4'}) which is used for the proof of the convergence of Nash-Moser iterations will be deduced for a sufficiently short time $T$.
\label{r3.1}
}\end{remark}

\begin{remark}{\rm
In view of the assumptions $\widehat{F}_N^j|_{x_1=0}=0$, cf. \eqref{30}, one has
\[
|\widehat{F}_\mu\times \widehat{F}_\nu|=|\widehat{F}_{2\mu}\widehat{F}_{3\nu}-\widehat{F}_{3\mu}\widehat{F}_{2\nu}|\sqrt{1+(\partial_2\hat{\varphi})^2+
(\partial_3\hat{\varphi})^2} \quad  \mbox{on}\ \partial\Omega_T.
\]
That is, \eqref{a22} and \eqref{40} imply
\[
\exists\ \mu,\nu\in\{1,2,3\},\ \mu\neq \nu\ :\quad
|\widehat{F}_{2\mu}\widehat{F}_{3\nu}-\widehat{F}_{3\mu}\widehat{F}_{2\nu}|\geq\delta_0 >0\quad  \mbox{on}\ \partial\Omega_T,
\]
with $\delta_0=\delta /\sqrt{1+K^2}$.
\label{r3}
}\end{remark}

\begin{remark}{\rm
Under the fulfilment of the Rayleigh-Taylor sign condition \eqref{42} we will first prove that  the solution to problem \eqref{34}--\eqref{36} obeys the $L^2$ estimate
\begin{equation}
\|\dot{U} \|_{L^{2}(\Omega_T)}+\|\varphi\|_{L^2(\partial\Omega_T)} \leq C\left\{\|f \|_{L^{2}(\Omega_T)}+ \|g\|_{H^{1/2}(\partial\Omega_T)}\right\},
\label{43'}
\end{equation}
where $C=C(K,\bar{\rho}_0,\bar{\rho}_1,\epsilon ,T)>0$ is a constant independent of the data $f$ and $g$.
\label{r3'}
}\end{remark}

\subsection{Reduction to homogeneous boundary conditions and an equivalent reformulation of the interior equations}

Technically, it is more convenient to derive first a priori estimates for a reduced linearized problem with homogeneous boundary conditions (with $g=0$) and then get estimates \eqref{41}, \eqref{43} and \eqref{43'} as their consequences. Using the classical argument, we subtract from the solution a more regular function $\widetilde{U}=(\tilde{p},\tilde{v}_1,0,\ldots ,0)\in H^{s+1}(\Omega_T)$ satisfying the boundary conditions \eqref{35} with $\varphi =0$. Then, the new unknown
\begin{equation}
U^{\natural}=\dot{U}-\widetilde{U} ,\label{a87'}
\end{equation}
with
\begin{equation}
\|\widetilde{U} \|_{H^{s+1}(\Omega_T)}\leq C\|g \|_{H^{s+1/2}(\partial\Omega_T)},
\label{tildU}
\end{equation}
satisfies problem \eqref{34}--\eqref{36} with $f$ replaced by
\begin{equation}
\mathfrak{f} =f-\widehat{A}_0\partial_t\widetilde{U} -\sum_{k=1}^3\widehat{A}_k\partial_k\widetilde{U}  -\widehat{\mathcal{C}}\widetilde{U},
\label{a87''}
\end{equation}
where
\[
\widehat{A}_{\alpha}:=A_{\alpha}(\widehat{U}), \quad \alpha=0,2,3,\quad \widehat{A}_1:=\widetilde{A}_1(\widehat{U},\widehat{\Psi}),\quad
\widehat{\mathcal{C}}:=\mathcal{C}(\widehat{U},\widehat{\Psi}).
\]
Moreover, here and later on $C$ is a positive constant that can change from line to line, and it may depend on other constants, in particular, in \eqref{tildU} the constant $C$ depends on $s$ (sometimes, as in Theorem \ref{t1} we show the dependence of $C$ from another constants). It follows from \eqref{tildU} and \eqref{a87''} (and \eqref{a22}) that
\begin{equation}
\|\mathfrak{f}\|_{H^{s}(\Omega_T)}\leq \|f\|_{H^{s}(\Omega_T)} + C\|g \|_{H^{s+1/2}(\partial\Omega_T)}.
\label{frakf}
\end{equation}

Dropping for convenience the index $^{\natural}$ in \eqref{a87'}, we get our reduced linearized problem:
\begin{align}
\widehat{A}_0\partial_t{U} +\sum_{k=1}^3\widehat{A}_k\partial_k{U}+\widehat{\mathcal{C}}{U} =\mathfrak{f}\qquad &\mbox{in}\ \Omega_T,\label{b48b}\\[3pt]
 {v}_{N}= D_0(\hat{v})\varphi - \varphi \partial_1\hat{v}_N ,\quad p=-\varphi\partial_1\hat{p},  \qquad &\mbox{on}\ \partial\Omega_T
\label{b50b.4} \\[3pt]
({U},\varphi )=0\qquad &\mbox{for}\ t<0,\label{b51b}
\end{align}
where $D_0(\hat{v}):=  \partial_t +\hat{v}_2\partial_2+\hat{v}_3\partial_3$ and $v_{N}=v_1-v_2\partial_2\widehat{\Psi}-v_3\partial_3\widehat{\Psi}$.

As was noted above (see \eqref{23}), the boundary is characteristic of constant multiplicity. It will be convenient to separate the ``noncharacteristic'' part $U_n=(p,v_N)$ of the unknown $U$. For this purpose we introduce the new unknown
\[
\mathcal{U}=(U_n,v_2\partial_1\widehat{\Phi} ,v_3\partial_1\widehat{\Phi},F_1,F_2,F_3)= (p,u,F_1,F_2,F_3),
\]
where $u =(v_N,v_2\partial_1\widehat{\Phi} ,v_3\partial_1\widehat{\Phi} )$. We have $U=\widehat{J}\mathcal{U}$, with
\begin{equation}
\widehat{J}= \begin{pmatrix}
1 & \underline{0} & \underline{0} &\underline{0} &\underline{0} \\
\underline{0}^{\top} & \widehat{\mathfrak{j}}  & O_3 & O_3 &O_3 \\
\underline{0}^{\top} & O_3 & I_3 & O_3 &O_3 \\
\underline{0}^{\top} & O_3 & O_3 & I_3 &O_3 \\
\underline{0}^{\top} & O_3 & O_3 & O_3 &I_3
\end{pmatrix},\quad
\widehat{\mathfrak{j}}=\frac{1}{\partial_1\widehat{\Phi}}\begin{pmatrix} \partial_1\widehat{\Phi}& \partial_2\widehat{\Psi}&  \partial_3\widehat{\Psi}\\
0&1&0\\
 0& 0& 1\end{pmatrix}.
\label{J}
\end{equation}
Then, system \eqref{b48b} is equivalently rewritten as
\begin{equation}\label{b48b'}
\widehat{\mathcal{A}}_0\partial_t{\mathcal U} +\sum_{k=1}^3\widehat{\mathcal{A}}_k\partial_k{\mathcal U}+\widehat{\mathcal{A}}_4{\mathcal U} =\tilde{\mathfrak{f}}\quad \mbox{in}\ \Omega_T,
\end{equation}
where
\[
\widehat{\mathcal{A}}_{\alpha}=\partial_1\widehat{\Phi}\,\widehat{J}^{\,\top} \widehat{A}_{\alpha}\widehat{J},\quad \tilde{\mathfrak{f}}= \partial_1\widehat{\Phi}\,\widehat{J}^{\,\top} \mathfrak{f},
\]
\[
\widehat{\mathcal{A}}_4=
\partial_1\widehat{\Phi}\,\widehat{J}^{\,\top} \Bigl\{ \widehat{A}_0\partial_t\widehat{J}+\sum_{k=1}^3\widehat{A}_k\partial_k\widehat{J}+\widehat{\mathcal{C}}\widehat{J}\Bigr\}.
\]
The symmetric matrices $\widehat{\mathcal{A}}_k$ ($k=1,2,3$) can be represented as
\begin{equation}\label{ak}
\widehat{\mathcal{A}}_k=\mathcal{E}_{1k+1}+\widehat{\mathfrak{A}}_k,
\end{equation}
where (cf. \eqref{23'})
\[
\mathcal{E}_{1k+1}=
\begin{pmatrix}
0 & \tilde{e}_k  \\
\tilde{e}_k^{\top}& O_{12}
\end{pmatrix},
\]
\[
\widehat{\mathfrak{A}}_k=
\begin{pmatrix}
{\displaystyle\frac{\hat{w}_k}{\hat{\rho} \hat{c}^2}} & \underline{0} & \underline{0} & \underline{0} & \underline{0} \\[7pt]
\underline{0}^{\top}&\hat{\rho} \hat{w}_k\widehat{\mathfrak{j}}^{\,\top}\,\widehat{\mathfrak{j}} & -\hat{\rho} \widehat{\mathcal{F}}_{k1}I_3 & -\hat{\rho} \widehat{\mathcal{F}}_{k2}I_3 & -\hat{\rho} \widehat{\mathcal{F}}_{k3}I_3 \\
\underline{0}^{\top} &-\hat{\rho} \widehat{\mathcal{F}}_{k1}I_3 & \hat{\rho} \hat{w}_kI_3 & O_3 & O_3 \\
\underline{0}^{\top} &-\hat{\rho} \widehat{\mathcal{F}}_{k2}I_3 & O_3 & \hat{\rho} \hat{w}_kI_3 & O_3 \\
\underline{0}^{\top} &-\hat{\rho} \widehat{\mathcal{F}}_{k3}I_3 & O_3 & O_3 & \hat{\rho} \hat{w}_kI_3
\end{pmatrix},
\]
$\tilde{e}_k=({e}_k,\underline{0}, \underline{0},\underline{0})$, and
$\widehat{\mathcal{F}}_{kj}$ are the $k$th components of the vectors $\widehat{\mathcal{F}}_j =(\widehat{\mathcal{F}}_{1j},\widehat{\mathcal{F}}_{2j},\widehat{\mathcal{F}}_{3j})=(\widehat{F}_N^j,\widehat{F}_{2j}\partial_1\widehat{\Phi} ,\widehat{F}_{3j}\partial_1\widehat{\Phi} )$. In view of assumption \eqref{30}, we have $\hat{w}_1|_{x_1=0}=\widehat{\mathcal{F}}_{1j}|_{x_1=0}=0$. This implies $\widehat{\mathfrak{A}}_1|_{x_1=0}=0$, i.e.,
\begin{equation}\label{a1}
\widehat{\mathcal{A}}_1|_{x_1=0}=\mathcal{E}_{12}
\end{equation}
(cf. \eqref{23}).

Below we will use the notations
\[
 \Omega_t:= [0, t]\times\Omega\quad\mbox{and}\quad \partial\Omega_t:=[0 ,t]\times\partial\Omega .
\]

\begin{proposition}
Let the basic state \eqref{a21} satisfies assumptions \eqref{30} and \eqref{31} for a certain index $j$. Then sufficiently smooth solutions of problem \eqref{b48b}--\eqref{b51b} satisfy
\begin{equation}
F_{N}^j=\widehat{F}_{2j}\partial_2\varphi +\widehat{F}_{3j}\partial_3\varphi -
\varphi\,\partial_1\widehat{F}_{N}^j+R_j\quad\mbox{on}\ \partial\Omega_T,
\label{59}
\end{equation}
where $F^j_N=F_{1j}-F_{2j}\partial_2\widehat{\Psi} -F_{3j}\partial_3\widehat{\Psi}$ and the function $R_j=R_j(t,x')$ obeys the estimate
\begin{equation}
\|R_j\|_{H^1(\partial\Omega_t)}\leq C\left\{\|\mathfrak{f}\|_{H^{3/2}(\Omega_T)} +\|\varphi\|_{H^{1}(\partial\Omega_t)}\right\}
\label{60}
\end{equation}
for all $t\in [0,T]$.
\label{p2}
\end{proposition}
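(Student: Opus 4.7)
The goal is to derive, on $\partial\Omega_T$, a scalar tangential transport equation for
\[
R_j := F_N^j - \widehat F_{2j}\partial_2\varphi - \widehat F_{3j}\partial_3\varphi + \varphi\,\partial_1\widehat F_N^j,
\]
and to show that its source is controlled by $\mathfrak f$ and $\varphi$.

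\textbf{Step 1: tangentialization at the boundary.} I read off from \eqref{b48b} the three linearized equations satisfied by $F_{kj}$ (rows $5$ through $13$). At $x_1=0$, assumption \eqref{30} gives $\hat w_1=0$ and $\widehat F_N^j=0$, hence $\widehat{\mathcal F}_{1j}|_{x_1=0}=0$; by the structure of $\widehat{\mathcal A}_1$ exhibited in \eqref{23'}--\eqref{a1}, all normal-derivative terms drop out on the boundary and the $F_{kj}$-equations reduce to the purely tangential equations
\[
\hat\rho\, D_0 F_{kj} - \hat\rho\,\mathcal L^j v_k = \mathfrak f_{F_{kj}}\big|_{x_1=0} - \bigl(\widehat{\mathcal C}U\bigr)_{F_{kj}}\big|_{x_1=0},\quad k=1,2,3,
\]
where $\mathcal L^j := \widehat F_{2j}\partial_2+\widehat F_{3j}\partial_3$.

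\textbf{Step 2: equation for $F_N^j$.} Combining these three tangential equations according to $F_N^j = F_{1j}-F_{2j}\partial_2\hat\varphi - F_{3j}\partial_3\hat\varphi$ at $x_1=0$, and regrouping $\mathcal L^j v_1-(\mathcal L^j v_2)\partial_2\hat\varphi-(\mathcal L^j v_3)\partial_3\hat\varphi$ via $v_1 = v_N + v_2\partial_2\hat\varphi + v_3\partial_3\hat\varphi$, I obtain
\[
\hat\rho\, D_0 F_N^j = \hat\rho\,\mathcal L^j v_N + \mathcal E_j,
\]
with $\mathcal E_j$ a linear expression in $(U|_{x_1=0},\varphi,\mathfrak f|_{x_1=0})$ whose coefficients are controlled by $K$ (cf.\ \eqref{a22}).

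\textbf{Step 3: using the boundary condition and a commutator identity.} I substitute the boundary condition $v_N=D_0\varphi-\varphi\,\partial_1\hat v_N$ from \eqref{b50b.4} into the $\mathcal L^j v_N$ term. The crucial algebraic input is that $[\mathcal L^j,D_0]=0$ on $\partial\Omega_T$: a direct computation of the commutator produces the coefficients $\mathcal L^j\hat v_k - D_0\widehat F_{kj}$ in front of $\partial_k$ ($k=2,3$), which vanish thanks to assumption \eqref{31}. Therefore $\mathcal L^j D_0\varphi = D_0(\mathcal L^j\varphi)$ and, adding $D_0(\varphi\,\partial_1\widehat F_N^j)$ to both sides,
\[
D_0 R_j = \mathcal G_j \qquad\text{on}\ \partial\Omega_T,
\]
where the $-\mathcal L^j(\varphi\,\partial_1\hat v_N)$-contribution has been absorbed, modulo $\varphi$-terms, into $D_0(\varphi\,\partial_1\widehat F_N^j)$ via $D_0\varphi = v_N+\varphi\,\partial_1\hat v_N$.

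\textbf{Step 4: structure of the source.} The source $\mathcal G_j$ contains three kinds of contributions: (i) the traces $\mathfrak f_{F_{kj}}|_{x_1=0}$ coming from the right-hand side of \eqref{b48b}; (ii) terms linear in $\varphi$ and its first tangential derivatives, with smooth coefficients depending on the basic state; and (iii) boundary traces of $U$ coming from $\widehat{\mathcal C}U$ and from the reorganization step. The key structural observation --- the linearized analog of the nonlinear preservation of $F_j\cdot N=0$ proved in the discussion preceding Proposition \ref{p2} along the lines of \cite{T09} --- is that, after invoking once more the tangential $F_{kj}$-equations of Step 1 together with \eqref{31}, the $U$-trace contributions in $\mathcal G_j$ reorganize into an expression depending only on $\varphi$ and $\mathfrak f|_{x_1=0}$ (this reflects the fact that on the nonlinear level the constraint $F_j\cdot N=0$ is propagated by the flow, the linearized discrepancy being generated only by the inhomogeneity $\mathfrak f$ and by the zero-order $\Psi$-term dropped in passing from \eqref{33} to \eqref{34}, which produces the $\varphi$-terms).

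\textbf{Step 5: integration in time.} From \eqref{b51b} one has $R_j|_{t<0}=0$. The scalar transport operator $D_0$ on $\partial\Omega_T$ is a Lipschitz perturbation of the identity transport, so the standard $H^1$ a~priori estimate for $D_0 R_j=\mathcal G_j$ together with the trace inequality $\|\mathfrak f|_{x_1=0}\|_{H^1(\partial\Omega_t)}\leq C\|\mathfrak f\|_{H^{3/2}(\Omega_T)}$ and a Gronwall argument absorbing the $\varphi$-dependence yield exactly \eqref{60}.

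The main obstacle is Step 4: arranging the algebra so that all boundary traces of $U$ appearing in $\mathcal G_j$ cancel and only $\varphi$- and $\mathfrak f$-contributions remain. This is where both the tangential-transport structure produced by \eqref{30} and the constraint \eqref{31} on the basic-state evolution of $\widehat F_{kj}$ ($k=2,3$) are essential; it is the direct linearized counterpart of the nonlinear constraint-propagation argument recalled after \eqref{22}.
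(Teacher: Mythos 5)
Your proposal follows essentially the same route as the paper's proof: by \eqref{30} the linearized $F_j$-equations restricted to $\{x_1=0\}$ lose all normal derivatives of the unknown and become tangential transport equations, the kinematic boundary condition from \eqref{b50b.4} is substituted for $v_N$, assumption \eqref{31} kills the commutator coefficients, and one obtains a boundary transport equation for $R_j$ (the paper's \eqref{61}, namely $D_0(\hat v)R_j-(\partial_1\hat v_N)R_j=\mathfrak f^{(j)}_N+\hat c^{\,j}\varphi$), which is then integrated by the standard energy method together with the trace theorem for $\mathfrak f^{(j)}_N|_{x_1=0}$.

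One point in your write-up must be tightened. In Step 4(ii) you allow terms with first tangential derivatives of $\varphi$ in the source $\mathcal G_j$, and in Step 5 you propose to ``absorb the $\varphi$-dependence'' by Gronwall; but the estimate \eqref{60} is an $H^1$ bound for $R_j$ with only $\|\varphi\|_{H^1(\partial\Omega_t)}$ on the right, so if any $\partial_{2}\varphi$ or $\partial_3\varphi$ survived in the source outside of a zero-order multiple of $R_j$ itself, differentiating the transport equation once would force $\|\varphi\|_{H^2}$ and the claimed estimate would fail. The content of assumption \eqref{31} (cf.\ Remark \ref{r5}) is precisely that these coefficients vanish, while the remaining $\partial_{2,3}\varphi$-contributions (e.g.\ those produced by $\mathcal L^j(\varphi\,\partial_1\hat v_N)$) are not eliminated but packaged, together with the $F_N^j$- and $\varphi$-traces, into the harmless zero-order term $-(\partial_1\hat v_N)R_j$ on the left, leaving a source consisting only of $\mathfrak f^{(j)}_N$ and $\hat c^{\,j}\varphi$. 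Since you invoke \eqref{31} for exactly this cancellation in Step 3, this is an internal inconsistency of presentation rather than a missing idea, but the final statement of the source structure (no derivatives of $\varphi$, only $\varphi$, $\mathfrak f$-traces and possibly $R_j$ itself) is what actually makes Step 5 deliver \eqref{60}.
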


\begin{proof}
Let $R_j:=F_{N}^j-\widehat{F}_{2j}\partial_2\varphi -\widehat{F}_{3j}\partial_3\varphi +
\varphi\,\partial_1\widehat{F}_{N}^j$. Considering the equations
\[
D(\hat{v},\widehat{\Psi})F_j+\frac{1}{\partial_1\widehat{\Phi}}\left\{ (u\cdot\nabla )\widehat{F}_j- (\widehat{\mathcal{F}}_j\cdot \nabla ) v -(\mathcal{F}_j\cdot \nabla ) \hat{v}\right\} =\mathfrak{f}^{(j)}
\]
contained in \eqref{b48b} at $x_1=0$, using \eqref{30}, \eqref{31} and the first boundary condition in \eqref{b50b.4}, after long but straightforward calculations we obtain
\begin{equation}
D_0(\hat{v})R_j-(\partial_1\hat{v}_N)R_j  =\mathfrak{f}^{(j)}_N+\hat{c}^{\,j}\varphi\quad\mbox{on}\ \partial\Omega_T,
\label{61}
\end{equation}
where
\[
D(\hat{v},\widehat{\Psi}):= \partial_t+\frac{1}{\partial_1\widehat{\Phi}}\, (\hat{w} \cdot\nabla ),\quad
D(\hat{v},\widehat{\Psi})|_{\partial\Omega}=D_0(\hat{v})|_{\partial\Omega},
\]
\[
\mathcal{F}_j =({F}_N^j,F_{2j}\partial_1\widehat{\Phi } ,F_{3j}\partial_1\widehat{\Phi} ),\quad
\mathfrak{f}^{(j)}=(\mathfrak{f}_{2+3j},\mathfrak{f}_{3+3j},\mathfrak{f}_{4+3j}),
\]
\[
\mathfrak{f}^{(j)}_N=\mathfrak{f}^{(j)}\cdot\widehat{\mathcal{N}},\quad
\widehat{\mathcal{N}}=(1,-\partial_2\widehat{\Psi},-\partial_3\widehat{\Psi}) ,\quad
\hat{c}^{\,j}=\partial_1\bigl(\hat{b}_j\cdot \widehat{\mathcal{N}}\bigr)\bigr|_{x_1=0},
\]
\[
 \hat{b}_j=
D(\hat{v},\widehat{\Psi})\widehat{F}_j-\frac{1}{\partial_1\widehat{\Phi}} (\widehat{\mathcal{F}}_j\cdot \nabla ) \hat{v}.
\]
Applying standard arguments of the energy method and the trace theorem for $\mathfrak{f}^{(j)}_N|_{x_1=0}$, from \eqref{61} we easily deduce estimate \eqref{60}.
\end{proof}

\begin{remark}{\rm
The proof of Proposition \ref{p2} is analogous to the proof in \cite{T09} of a linear equation associated with the boundary constraint $(H\cdot N)|_{x_1=0}=0$ for the magnetic field $H$. The term $\hat{c}_0\varphi$ in the right-hand side of \eqref{61} would be zero if, as in \cite{T09}, we assumed that the basic state satisfies corresponding equations contained in our system in the whole domain $\Omega_T$ (i.e., $\hat{b}_j=0$ in $\Omega_T$). Note also that the coefficients $\hat{c}_k^{\,j}$ by the terms $\hat{c}_k^{\,j}\partial_k\varphi$ ($k=2,3$) which could appear in the right-hand side of \eqref{61} vanish thanks to assumption \eqref{31}.
\label{r5}
}\end{remark}

\subsection{Proof of well-posedness under the fulfilment of the Rayleigh-Taylor sign condition}

We first prove the a priori estimate
\begin{equation}
\|{U} \|_{L^{2}(\Omega_T)}+\|\varphi\|_{L^2(\partial\Omega_T)} \leq C\|\mathfrak{f} \|_{L^{2}(\Omega_T)}
\label{62}
\end{equation}
which, by virtue of \eqref{frakf} for $s=0$, implies the $L^2$ estimate \eqref{43'}. Taking into account \eqref{30} and \eqref{a1}, by a standard argument we get for system \eqref{b48b'} the energy inequality
\begin{equation}
I(t)- 2\int\limits_{\partial\Omega_t}({p}{v}_N)|_{x_1=0}\,{\rm d}x'{\rm d}s\leq C
\biggl\{ \| \mathfrak{f}\|^2_{L^2(\Omega_T)} +\int\limits_0^tI(s){\rm d}s\biggr\},
\label{63}
\end{equation}
where $I(t)=\int_{\Omega}(\widehat{\mathcal{A}}_0\mathcal{U},\mathcal{U})\,{d}x =\int_{\Omega}\partial_1\widehat{\Phi}(\widehat{A}_0{U},{U})\,{d}x$. Using the same simple calculations as in \cite{Tcpam}, in view of the boundary conditions \eqref{b50b.4}, we obtain
\[
\begin{split}
-2({p}{v}_N)|_{x_1=0}  = & \;2(\partial_1\hat{p})\varphi (D_0(\hat{v})\varphi - \varphi \partial_1\hat{v}_N)|_{x_1=0} \\
 = &\; \partial_t\left(\partial_1\hat{p}|_{x_1=0}\,\varphi^2\right)+\partial_2\left((\hat{v}_2\partial_1\hat{p})|_{x_1=0}\,\varphi^2\right)
+\partial_3\left((\hat{v}_3\partial_1\hat{p})|_{x_1=0}\,\varphi^2\right) \\
 & +\left.\left(\partial_t\partial_1\hat{p}+\partial_2(\hat{v}_2\partial_1\hat{p})+\partial_3(\hat{v}_3\partial_1\hat{p})
-2\partial_1\hat{p}\,\partial_1\hat{v}_N\right)\right|_{x_1=0}\varphi^2.
\end{split}
\]
Then, it follows from \eqref{63} that
\[
I(t)+\int\limits_{\partial\Omega }  \partial_1\hat{p}|_{x_1=0}\,\varphi^2 \,{\rm d}x'
\leq C \biggl\{ \| \mathfrak{f}\|^2_{L^2(\Omega_T)}
+\int\limits_0^t\left(I(s)
+\|\varphi (s)\|^2_{L^2(\partial\Omega )}\right)\,{\rm d}s\biggr\}.
\]
Taking into account assumptions \eqref{29} and  \eqref{42} and applying Gronwall's lemma, we finally deduce the basic a priori $L^2$ estimate \eqref{62}.

Having in hand the $L^2$ estimate \eqref{62} {\it with no loss of derivatives}, the existence of a weak $L^2$ solution to problem \eqref{b48b}--\eqref{b51b} can be obtained by the classical duality argument. As in \cite{Tcpam}, we can define a dual problem for \eqref{b48b}--\eqref{b51b} and then get for it an $L^2$ a priori estimate, provided that condition \eqref{42} holds for the basic state. We omit detailed calculations which are really similar to those in \cite{Tcpam}. Then, tangential differentiation (with respect to $t$, $x_2$ and $x_3$) and the estimation of the normal ($x_1$-)derivative of the unknowns through tangential ones (see just below) give the existence of an $H^1$ solution to problem \eqref{b48b}--\eqref{b51b}. Its uniqueness follows from the $L^2$ estimate \eqref{62}. Returning to the original unknown $\dot{U}$ (see \eqref{a87'}) and using estimate \eqref{frakf} for $s=1$, we obtain the well-posedness of problem \eqref{34}--\eqref{36} stated in Theorem \ref{t1} under the fulfilment of the Rayleigh-Taylor sign condition \eqref{42}.

It remains to prove the a priori estimate \eqref{43}. Taking into account \eqref{frakf} for $s=1$, it will follow from the $H^1$ estimate
\begin{equation}
\|{U} \|_{H^{1}(\Omega_T)}+\|\varphi\|_{H^1(\partial\Omega_T)} \leq C\|\mathfrak{f} \|_{H^{1}(\Omega_T)}.
\label{64}
\end{equation}
For the proof of \eqref{64} we first deduce an estimate for tangential derivatives. Omitting standard arguments of the energy method, as in \cite{Tcpam}, we easily get this estimate (it is better to call it the preparatory inequality for obtaining \eqref{64}):
\begin{equation}
\nt U(t)\nt^2_{{\rm tan},1}+\nt \varphi (t)\nt^2_{H^1(\partial\Omega )}\leq C\mathcal{M}(t),
\label{65}
\end{equation}
where
\[
\nt u(t)\nt^2_{{\rm tan},1}:=\| u(t)\|^2_{L^2(\Omega )}+\|\partial_t u(t)\|^2_{L^2(\Omega )}+\|\partial_2 u(t)\|^2_{L^2(\Omega )}+\|\partial_3 u(t)\|^2_{L^2(\Omega )},
\]
\[
\nt u (t)\nt^2_{H^1(D )} :=\|u(t) \|^2_{H^{1}(D)} + \|\partial_tu(t) \|^2_{L^2(D)}\qquad (D=\Omega\quad\mbox{or}\quad D=\partial\Omega ),
\]
\[
\mathcal{M}(t)=\|\mathfrak{f} \|^2_{H^{1}(\Omega_T)} +\int\limits_0^t\mathcal{I}(s){\rm d}s ,\quad
\mathcal{I}(t)=\nt U(t)\nt^2_{H^1(\Omega )}+\nt \varphi (t)\nt^2_{H^1(\partial\Omega )}.
\]

We first estimate the normal derivative of the ``noncharacteristic'' unknown $U_n=(p,v_N)$. In view of \eqref{ak}, it follows from \eqref{b48b'} that
\begin{equation}
\partial_1(v_N,p,\ldots ,0)= \tilde{\mathfrak{f}} -\widehat{\mathcal{A}}_0\partial_t\mathcal{U} - \widehat{\mathfrak{A}}_1\partial_1\mathcal{U} -\widehat{\mathcal{A}}_2\partial_2\mathcal{U}-\widehat{\mathcal{A}}_3\partial_3\mathcal{U}-\widehat{\mathcal{A}}_4\mathcal{U} \quad \mbox{in}\ \Omega_T.
\label{n64}
\end{equation}
Since $\widehat{\mathfrak{A}}_1|_{x_1=0}=0$, this gives us the estimate
\[
\|\partial_1U_n(t)\|^2_{L^2(\Omega )}\leq C\left\{ \|\mathfrak{f} (t)\|^2_{L^{2}(\Omega)}+\nt U(t)\nt^2_{{\rm tan},1}+\|\sigma \partial_1 U(t)\|^2_{L^2(\Omega )} \right\}
\]
which, by virtue of the elementary inequality
\begin{equation}
\nt u(t)\nt^2_{H^{s-1}(D )}\leq \int\limits_{0}^{t}\nt u(\tau )\nt^2_{H^s(D )}d\tau =\|u\|^2_{H^s([0,t]\times D)}
\label{elin}
\end{equation}
for $s=1$, implies
\begin{equation}
\|\partial_1U_n(t)\|^2_{L^2(\Omega )}\leq C\left\{ \|\mathfrak{f}\|^2_{H^{1}(\Omega_T)}+\nt U(t)\nt^2_{{\rm tan},1}+\|\sigma \partial_1 U(t)\|^2_{L^2(\Omega )} \right\},
\label{66}
\end{equation}
where $\sigma=\sigma (x_1)\in C^{\infty}(\mathbb{R}_+)$ is a monotone increasing function such that $\sigma (x_1)=x_1$ in a neighborhood of the origin and $\sigma (x_1)=1$ for $x_1$ large enough.

Since $\sigma |_{x_1=0}=0$, we do not need to use boundary conditions to estimate $\sigma\partial_1U$, and we easily get the inequality
\begin{equation}\label{66'}
\| \sigma\partial_1U(t)\|^2_{L^2(\Omega )}\leq C\biggl\{\|\mathfrak{f} \|^2_{H^{1}(\Omega_T)} +\int\limits_0^t\nt U(s)\nt^2_{H^1(\Omega )}{\rm d}s\biggr\}
\end{equation}
which together with \eqref{65} and \eqref{66} yields
\begin{equation}
\|\partial_1U_n(t)\|^2_{L^2(\Omega )}\leq C\mathcal{M}(t).
\label{67}
\end{equation}

The missing normal derivatives of the ``characteristic'' unknowns $v_2$, $v_3$, $F_j$ ($j=1,2,3$)
can be estimated from equations for the linearized divergences $\xi_j={\rm div}\,\mathcal{R}_j$ in \eqref{20} and a symmetric hyperbolic system for the linearized vorticity
$\omega = \nabla\times\mathrm{v}$ and $\eta_j=\nabla\times \mathfrak{F}_j$ obtained by applying the div and curl operators to equations following from \eqref{b48b}, where
\[
\mathcal{R}_j=\hat{\rho}\mathcal{F}_j+\frac{\widehat{\mathcal{F}}_j}{\hat{c}^2}\,p,\quad
\mathrm{v}=(v_1\partial_1\widehat{\Phi},v_{\tau_2},v_{\tau_3}),\quad \mathfrak{F}_j=(F_{1j}\partial_1\widehat{\Phi},F^j_{\tau_2},F^j_{\tau_3}),
\]
\[
v_{\tau_k}=v\cdot\hat{\tau}_k,\quad F^j_{\tau_k}=F_j\cdot\hat{\tau}_k\quad (k=2,3),\quad \hat{\tau}_2= (\partial_2\widehat{\Psi},1,0),\quad
\hat{\tau}_3= (\partial_3\widehat{\Psi},0,1).
\]
Omitting calculations, we write down the equations for $\xi_j$ and the symmetric hyperbolic system for $\omega$ and $\eta_j$:
\begin{align}
D(\hat{v},\widehat{\Psi})\xi_j +{\rm l.o.t.}={\rm div}\,\tilde{\mathfrak{f}}^{\,(j)}&\qquad \mbox{in}\ \Omega_T,\label{68}
\\
D(\hat{v},\widehat{\Psi})\omega -\frac{1}{\partial_1\widehat{\Phi}} \sum_{j=1}^{3}(\widehat{\mathcal{F}}_j\cdot \nabla )\eta_j+{\rm l.o.t.}=\nabla\times\breve{\mathfrak{f}}^{\,v}&\qquad \mbox{in}\ \Omega_T,\label{69}
\\
D(\hat{v},\widehat{\Psi})\eta_j -\frac{1}{\partial_1\widehat{\Phi}} (\widehat{\mathcal{F}}_j\cdot \nabla )\omega +{\rm l.o.t.}=\nabla\times\breve{\mathfrak{f}}^{\,(j)}&\qquad \mbox{in}\ \Omega_T,\label{70}
\end{align}
where
\[
\tilde{\mathfrak{f}}^{\,(j)}=(\mathfrak{f}^{(j)}_N,\mathfrak{f}_{3+3j}\partial_1\widehat{\Phi},
\mathfrak{f}_{4+3j}\partial_1\widehat{\Phi}),\quad \breve{\mathfrak{f}}^{\,v}=(\mathfrak{f}_2\partial_1\widehat{\Phi},\mathfrak{f}^v_{\tau_2},\mathfrak{f}^v_{\tau_3}),
\]
\[
\breve{\mathfrak{f}}^{\,(j)}=(\mathfrak{f}_{2+3j}\partial_1\widehat{\Phi},\mathfrak{f}^{(j)}_{\tau_2},\mathfrak{f}^{(j)}_{\tau_3}),
\]
\[
\mathfrak{f}^v_{\tau_k}=\mathfrak{f}^v\cdot\hat{\tau}_k,\quad \mathfrak{f}^{(j)}_{\tau_k}=\mathfrak{f}^{(j)}\cdot\hat{\tau}_k,\quad k=2,3,\quad \mathfrak{f}^v=(\mathfrak{f}_2,\mathfrak{f}_3,\mathfrak{f}_4);
\]
``l.o.t.'' represents a sum of lower-order terms  $\hat{c}\,\xi_j$, $\hat{c}\,\omega$, $\hat{c}\,\eta_j$, $\hat{c}\,U_m$ and $\hat{c}\,\partial_jU_m$, here and below $\hat{c}$ is the common notation for a generic coefficient (depending on the basic state \eqref{a21}) whose exact form has no meaning, $U_m$ ($m=\overline{1,13}$) is a component of the unknown $U$ in \eqref{b48b}, and the rest notations were introduced just after \eqref{61}.

All of the equations in \eqref{68}--\eqref{70} do not need boundary conditions because, in view of the first assumption in \eqref{30}, the first component of the vector $\hat{w}$ appearing in the definition of the differential operator $D(\hat{v},\widehat{\Psi})$ is zero on the boundary $x_1=0$. Therefore, omitting detailed simple arguments of the energy method, we easily deduce the estimate
\begin{equation}\label{omega}
\begin{split}
\|\omega (t)\|^2_{L^2(\Omega )}+  \sum_{j=1}^{3} & \left\{\|\xi_j (t)\|^2_{L^2(\Omega )}+ \|\eta_j (t)\|^2_{L^2(\Omega )}\right\} \\  & \leq C\biggl\{\|\mathfrak{f} \|^2_{H^{1}(\Omega_T)} +\int\limits_0^t\nt U(s)\nt^2_{H^1(\Omega )}{\rm d}s\biggr\}
\end{split}
\end{equation}
whose combination with \eqref{65} and \eqref{67} implies
\[
\mathcal{I}(t)\leq C\mathcal{M}(t).
\]
Applying then Gronwall's lemma, we obtain the priori estimate
\[
\|{U} \|_{H^{1}(\Omega_T)}+\|\varphi\|_{H^1(\partial\Omega_T)} \leq C\|\mathfrak{f} \|_{H^{3/2}(\Omega_T)}
\]
which yields \eqref{43}. The proof of Theorem \ref{t1} under the fulfilment of the Rayleigh-Taylor sign condition \eqref{42} is thus complete.

\subsection{Proof of the a priori estimate (\ref{41})}

If, instead of \eqref{42}, the basic state satisfies the non-collinearity condition \eqref{40}, then we are not able to obtain an $L^2$ a priori estimate for the linearized problem and have to prolong system \eqref{b48b'} up to first-order tangential derivatives. After differentiating system \eqref{b48b'} with respect to $t$, $x_2$ and $x_3$ we deduce the following energy inequalities for $\partial_{\alpha}U$ ($\alpha =0,2,3$) with $\partial_0:=\partial_t$:
\begin{equation}
I_{\alpha}(t)-2\int\limits_{\partial\Omega_t}(\partial_{\alpha}{p}\,\partial_{\alpha}{v}_N)|_{x_1=0}\,{\rm d}x'{\rm d}s\leq C\mathcal{M}(t),
\label{71}
\end{equation}
where $I_{\alpha}(t)=\int_{\Omega}(\widehat{\mathcal{A}}_0\partial_{\alpha}\mathcal{U},\partial_{\alpha}\mathcal{U})\,{d}x=
\int_{\Omega}\partial_1\widehat{\Phi}(\widehat{A}_0\partial_{\alpha}{U},\partial_{\alpha}{U})\,{d}x$.
In view of the second boundary condition in \eqref{b50b.4}, it follows from \eqref{71} that
\begin{equation}
I_{\alpha}(t)\leq C\mathcal{M}(t)-2\int\limits_{\partial\Omega_t}\left(\partial_1\hat{p}\,\partial_{\alpha}{\varphi}\,\partial_{\alpha}{v}_N+
(\partial_{\alpha}\partial_1\hat{p})\,{\varphi}\,\partial_{\alpha}{v}_N\right)|_{x_1=0}\,{\rm d}x'{\rm d}s.
\label{72}
\end{equation}

Under the non-collinearity condition \eqref{40} the front symbol is elliptic, i.e., we can resolve the boundary conditions for the space-time gradient $\nabla_{t,x'}\varphi =(\partial_t\varphi ,\partial_2\varphi ,\partial_3\varphi )$. Indeed, taking into account \eqref{59} for $j=\mu$ and $j=\nu$ and the first boundary condition in \eqref{b50b.4}, we have the following algebraic system for $\nabla_{t,x'}\varphi$:
\begin{equation}
\left\{
\begin{array}{ll}
\widehat{F}_{2\mu}\partial_2\varphi +\widehat{F}_{3\mu}\partial_3\varphi =F_{N}^\mu-R_\mu+\varphi\,\partial_1\widehat{F}_{N}^\mu,\\
\widehat{F}_{2\nu}\partial_2\varphi +\widehat{F}_{3\nu}\partial_3\varphi =F_{N}^\nu-R_\nu+\varphi\,\partial_1\widehat{F}_{N}^\nu,\\
\hat{v}_{2}\partial_2\varphi +\hat{v}_{3}\partial_3\varphi+ \partial_t\varphi ={v}_{N}+ \varphi \partial_1\hat{v}_N,\qquad \ \mbox{on}\ \partial\Omega_T.
\end{array}\right. \label{73}
\end{equation}
By virtue of  \eqref{40} (see also Remark \ref{r3}), we resolve \eqref{73} for $\nabla_{t,x'}\varphi$:
\begin{equation}
\nabla_{t,x'}\varphi =\hat{a}_1(F_{N}^\mu-R_\mu)+\hat{a}_2(F_{N}^\nu-R_\nu)+\hat{a}_3v_N+\hat{a}_4\varphi\quad \mbox{on}\ \partial\Omega_T,
\label{74}
\end{equation}
where the vector-functions $\hat{a}_{\alpha}={a}_{\alpha}(\widehat{U}_{|x_1=0},\hat{\varphi})$ ($\alpha =\overline{0,4}$) can be easily written in explicit form.

Using \eqref{74}, we reduce the term $-2\partial_1\hat{p}\,\partial_{\alpha}{\varphi}\,\partial_{\alpha}{v}_N|_{x_1=0}$ appearing in the boundary integral in \eqref{72} to the sum of the ``lower-order'' terms
\begin{equation}
\begin{split}
\hat{c}F_{N}^\mu\partial_{\alpha}{v}_N,\quad \hat{c}F_{N}^\nu\partial_{\alpha}{v}_N,\quad \hat{c}R_{\mu}\partial_{\alpha}{v}_N, & \\
\hat{c}R_{\nu}\partial_{\alpha}{v}_N,\quad \hat{c}v_N\partial_{\alpha}{v}_N,\quad \hat{c}\varphi\partial_{\alpha}{v}_N &\quad \mbox{on}\ \partial\Omega_T .
\end{split}
\label{bterms}
\end{equation}
The terms $\hat{c}F_{N}^j\partial_{\alpha}{v}_N|_{x_1=0}$ (with $j=\mu$ and $j=\nu$) are estimated by passing to the volume integral and integrating by parts:
\[
\begin{split}
\int\limits_{\partial\Omega_t} &\hat{c}\,F_{N}^j\partial_{\alpha}{v}_N|_{x^1=0}\,{\rm d}x'{\rm d}s=-\int\limits_{\Omega_t}\partial_1\bigl(\tilde{c}F_{N}^j\partial_{\alpha}{v}_N\bigr){\rm d}x{\rm d}s \\
&=\int\limits_{\Omega_t}\Bigl\{\tilde{c}\partial_{\alpha}F_{N}^j\partial_1{v}_N +(\partial_{\alpha}\tilde{c})F_{N}^j\partial_1{v}_N -\tilde{c}\partial_1F_{N}^j\partial_{\alpha}{v}_N-(\partial_1\tilde{c})F_{N}^j\partial_1{v}_N\Bigr\}{\rm d}x{\rm d}s \\
&\quad-\int\limits_{\Omega_t}\partial_{\alpha}\bigl(\tilde{c}F_{N}^j\partial_1{v}_N\bigr){\rm d}x{\rm d}s\\
& \leq \|U\|^2_{H^1(\Omega_t )}-\int\limits_{\Omega_t}\partial_{\alpha}\bigl(\tilde{c}F_{N}^j\partial_1{v}_N\bigr){\rm d}x{\rm d}s,
\end{split}
\]
where $\tilde{c}|_{x_1=0}=\hat{c}$. If $\alpha =2$ or $\alpha =3$ the last integral above is equal to zero. But for $\alpha =0$ we have:
\[
-\int\limits_{\Omega_t}\partial_s\bigl(\tilde{c}F_{N}^j\partial_1{v}_N\bigr){\rm d}x{\rm d}s=
-\int\limits_{\Omega}\tilde{c}F_{N}^j\partial_1{v}_N{\rm d}x.
\]
Using the Young inequality and the elementary inequality \eqref{elin}, we estimate the last integral as follows:
\[
\begin{split}
-\int\limits_{\Omega}\tilde{c}F_{N}^j\partial_1{v}_N{\rm d}x &\leq C\left\{ \tilde{\varepsilon}\nt U(t)\nt^2_{H^1(\Omega )}
+\frac{1}{\tilde{\varepsilon}}\|U (t)\|^2_{L^2(\Omega )}\right\}\\ &\leq C\left\{ \tilde{\varepsilon}\nt U(t)\nt^2_{H^1(\Omega )}
+\frac{1}{\tilde{\varepsilon}}\|U\|^2_{H^1(\Omega_t )}\right\},
\end{split}
\]
where $\tilde{\varepsilon} $ is a small positive constant.

The rest terms in \eqref{bterms} are estimated in the same way as above. The functions $R_j$ and $\varphi$ appear in the volume integral as $\chi (x_1)R_j$ and $\Psi$ respectively, where $\chi (x_1)$ is the ``lifting'' function from \eqref{14}. We also use estimate \eqref{60} for $R_j$. Omitting technical details, we finally obtain the following estimate for the boundary integral in \eqref{72}:
\begin{equation}\label{77}
\begin{split}
-2\int\limits_{\partial\Omega_t}(\partial_1\hat{p}\,\partial_{\alpha}{\varphi}\,\partial_{\alpha}{v}_N+
(\partial_{\alpha}\partial_1\hat{p})&\,{\varphi}\,\partial_{\alpha}{v}_N)|_{x_1=0} \,{\rm d}x'{\rm d}s \\ & \leq
\tilde{\varepsilon}C \nt U(t)\nt^2_{H^1(\Omega )}
+\frac{C}{\tilde{\varepsilon}}\mathcal{N}(t),
\end{split}
\end{equation}
where
\[
\mathcal{N}(t)=\|\mathfrak{f} \|^2_{H^{3/2}(\Omega_T)} +\int\limits_0^t\mathcal{I}(s){\rm d}s.
\]
Then, \eqref{72} and \eqref{77} for $\alpha =0,2,3$ imply the inequality
\[
\nt U(t)\nt^2_{{\rm tan},1}\leq
\tilde{\varepsilon}C \nt U(t)\nt^2_{H^1(\Omega )}
+\frac{C}{\tilde{\varepsilon}}\mathcal{N}(t)
\]
whose combination with \eqref{66}, \eqref{66'} and \eqref{omega} gives
\[
\nt U(t)\nt^2_{H^1(\Omega )}\leq
\tilde{\varepsilon}C \nt U(t)\nt^2_{H^1(\Omega )}
+\frac{C}{\tilde{\varepsilon}}\mathcal{N}(t).
\]
We then absorb the term $\tilde{\varepsilon}C \nt U(t)\nt^2_{H^1(\Omega )}$ in the left-hand side of the last inequality by choosing $\tilde{\varepsilon}$ small enough:
\begin{equation}\label{80}
\nt U(t)\nt^2_{H^1(\Omega )}\leq C\mathcal{N}(t).
\end{equation}

Using \eqref{74}, inequality \eqref{elin}, estimate \eqref{60} and the trace theorem, we obtain the inequality
\[
\nt \varphi (t)\|_{H^1(\partial\Omega )}^2\leq C\left\{ \nt U(t)\nt^2_{H^1(\Omega )}+\mathcal{N}(t)\right\}
\]
which together with \eqref{80} implies
\[
\mathcal{I}(t)\leq C\mathcal{N}(t).
\]
Applying then Gronwall's lemma, we obtain the priori estimate
\[
\|{U} \|_{H^{1}(\Omega_T)}+\|\varphi\|_{H^1(\partial\Omega_T)} \leq C\|\mathfrak{f} \|_{H^{3/2}(\Omega_T)}
\]
which, in view of \eqref{frakf} for $s=3/2$, yields \eqref{41}.

\subsection{Proof of well-posedness under the fulfilment of the non-collinearity condition}

To prove the existence of the solution of problem \eqref{b48b}--\eqref{b51b} under the fulfilment of the non-collinearity condition \eqref{40} we use the same idea as in \cite{SecTra}. We first solve the problem
\begin{align}
\widehat{A}_0\partial_t{U} +\sum_{k=1}^3\widehat{A}_k\partial_k{U}+\widehat{\mathcal{C}}{U} =\mathfrak{f}\qquad &\mbox{in}\ \Omega_T,\label{81}\\[3pt]
p=-\overline{\varphi}\partial_1\hat{p},  \qquad &\mbox{on}\ \partial\Omega_T
\label{82} \\[3pt]
{U}=0\qquad &\mbox{for}\ t<0,\label{83}
\end{align}
under the assumption that $\overline{\varphi}$ is given.

\begin{lemma}
Let the basic state \eqref{a21} satisfies assumptions \eqref{29} and \eqref{30}. Then, for all given $(\mathfrak{f},\overline{\varphi} ) \in H^{1}(\Omega_T)\times H^{3/2}(\partial\Omega_T)$  vanishing in the past problem \eqref{81}--\eqref{83}  has a unique solution $U\in H^{1}(\Omega_T)$ such that
\begin{equation}\label{84}
\|{U} \|_{H^{1}(\Omega_T)}\leq C\left\{\|\mathfrak{f} \|_{H^{1}(\Omega_T)}+ \|\overline{\varphi}\|_{H^{3/2}(\partial\Omega_T)}\right\}.
\end{equation}
\label{lem1}
\end{lemma}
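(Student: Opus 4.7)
The plan is to reduce to a homogeneous boundary condition, derive an $L^2$ a priori estimate with no loss of derivatives (thanks to the Dirichlet-type nature of the single boundary condition prescribed on $p$), obtain existence via a classical duality argument, and then upgrade to $H^1$ using tangential energy estimates together with the normal-derivative recovery developed in this section. Crucially, neither the Rayleigh-Taylor sign condition \eqref{42} nor the non-collinearity condition \eqref{40} plays any role here: with only Dirichlet data on $p$ prescribed, the boundary term in the energy identity vanishes identically once the condition is homogenized.

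First I would homogenize the boundary condition. Since $\overline{\varphi}\in H^{3/2}(\partial\Omega_T)$ and the basic state is smooth, the datum $-\overline{\varphi}\,\partial_1\hat{p}|_{x_1=0}$ lies in $H^{3/2}(\partial\Omega_T)$, so by the trace theorem it admits a lift $\widetilde{U}=(\tilde{p},0,\ldots,0)\in H^2(\Omega_T)$ vanishing in the past with
\[
\|\widetilde{U}\|_{H^{2}(\Omega_T)}\leq C\|\overline{\varphi}\|_{H^{3/2}(\partial\Omega_T)}.
\]
The difference $U^{\natural}=U-\widetilde{U}$ then solves \eqref{81}--\eqref{83} with the homogeneous boundary condition $p^{\natural}|_{x_1=0}=0$ and the modified source
\[
\mathfrak{f}^{\natural}=\mathfrak{f}-\widehat{A}_0\partial_t\widetilde{U}-\sum_{k=1}^{3}\widehat{A}_k\partial_k\widetilde{U}-\widehat{\mathcal{C}}\widetilde{U}\in H^{1}(\Omega_T),
\]
satisfying $\|\mathfrak{f}^{\natural}\|_{H^{1}(\Omega_T)}\leq C(\|\mathfrak{f}\|_{H^{1}(\Omega_T)}+\|\overline{\varphi}\|_{H^{3/2}(\partial\Omega_T)})$.

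Second, I would run the $L^2$ energy argument on the rewritten system \eqref{b48b'} for $\mathcal{U}^{\natural}$. Exactly as in the derivation of \eqref{63}, one obtains
\[
I^{\natural}(t)-2\int_{\partial\Omega_t}(p^{\natural}v_N^{\natural})|_{x_1=0}\,{\rm d}x'{\rm d}s\leq C\Bigl\{\|\mathfrak{f}^{\natural}\|^{2}_{L^{2}(\Omega_T)}+\int_{0}^{t}I^{\natural}(s)\,{\rm d}s\Bigr\},
\]
but the boundary integral vanishes identically since $p^{\natural}|_{x_1=0}=0$. Gronwall's lemma and \eqref{29} then yield $\|U^{\natural}\|_{L^{2}(\Omega_T)}\leq C\|\mathfrak{f}^{\natural}\|_{L^{2}(\Omega_T)}$ with no loss of derivatives. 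Existence of an $L^2$ weak solution follows by the classical duality argument since the dual problem is symmetric hyperbolic with an analogous Dirichlet-type condition on its corresponding pressure component and therefore obeys the same $L^2$ bound; uniqueness is immediate from the $L^2$ estimate applied to the difference of two solutions.

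Third, I would upgrade to $H^1$. Applying tangential derivatives $\partial_{\alpha}$ for $\alpha\in\{0,2,3\}$ to the homogenized system produces, as in \eqref{71}, boundary integrals proportional to $\partial_{\alpha}p^{\natural}\,\partial_{\alpha}v_N^{\natural}|_{x_1=0}$, which all vanish because $p^{\natural}|_{x_1=0}=0$ forces $\partial_{\alpha}p^{\natural}|_{x_1=0}=0$ for tangential $\alpha$. This controls $\nt U^{\natural}(t)\nt_{\mathrm{tan},1}^{2}$. The normal derivative of the noncharacteristic part $U_n^{\natural}=(p^{\natural},v_N^{\natural})$ is then recovered from \eqref{n64} using the splitting \eqref{ak} and $\widehat{\mathfrak{A}}_1|_{x_1=0}=0$, exactly as in \eqref{66}--\eqref{67}, and the missing normal derivatives of the characteristic components $v_2,v_3,F_j$ are recovered from the linearized divergence equations \eqref{68} and the symmetric hyperbolic system \eqref{69}--\eqref{70} for the linearized vorticity and curls, which require no boundary conditions thanks to $\hat{w}_1|_{x_1=0}=0$. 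Combining these estimates and applying Gronwall's lemma gives $\|U^{\natural}\|_{H^{1}(\Omega_T)}\leq C\|\mathfrak{f}^{\natural}\|_{H^{1}(\Omega_T)}$, and reverting $U=U^{\natural}+\widetilde{U}$ together with the lift bound produces \eqref{84}.

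The main obstacle is essentially bookkeeping rather than genuine analysis: the single Dirichlet-type condition is maximally dissipative for the symmetric hyperbolic system, and every boundary integral that could otherwise spoil the energy estimates vanishes once the boundary condition is homogenized. What must be checked carefully is only that the lift $\widetilde{U}$ is regular enough to keep $\mathfrak{f}^{\natural}\in H^{1}$ (which is why the hypothesis asks for $\overline{\varphi}\in H^{3/2}$), and that the div/curl recovery of normal derivatives of characteristic components still applies in the present reduced setting where the boundary condition does not couple to $\varphi$; the latter is clear since those equations are purely interior and make no use of the free-boundary relation.
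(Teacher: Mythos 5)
Your argument is correct and, in its estimate-producing parts, matches the paper: the same lift of $-\overline{\varphi}\,\partial_1\hat p|_{x_1=0}\in H^{3/2}(\partial\Omega_T)$ to an $H^2(\Omega_T)$ function $(\tilde p,0,\ldots,0)$, the observation that the homogenized condition $p|_{x_1=0}=0$ annihilates the boundary term $2(pv_N)|_{x_1=0}$, and the recovery of the full $H^1$ norm through tangential estimates, \eqref{n64} with $\widehat{\mathfrak{A}}_1|_{x_1=0}=0$, and the div/curl system \eqref{68}--\eqref{70}. The genuine difference is the existence step: the paper does not argue by duality but notes that $p|_{x_1=0}=0$ is a maximally nonnegative (not strictly dissipative) boundary condition for the symmetric hyperbolic system \eqref{81}, whose boundary is characteristic of constant multiplicity, and invokes \cite{Sec96} to obtain directly a strong solution in the conormal space $H^1_*(\Omega_T)$ with $U_n\in H^1(\Omega_T)$, which is then upgraded to $U\in H^1(\Omega_T)$ via \eqref{68}--\eqref{omega}, essentially as you do. Your route --- $L^2$ bound with vanishing boundary term, duality (your identification of the adjoint boundary condition as the vanishing of the dual pressure trace is right), then tangential differentiation and normal-derivative recovery --- is the scheme the paper itself uses in the Rayleigh--Taylor subsection, and it is legitimate here since neither \eqref{40} nor \eqref{42} enters once the boundary condition is homogenized. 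What the citation of \cite{Sec96} buys, and what your sketch leaves implicit, is the weak-to-strong passage at a characteristic boundary: before you can differentiate the duality-produced weak solution tangentially and take traces of $(p,v_N)$, you must know it is a strong solution, which for characteristic boundaries of constant multiplicity is exactly the content of \cite{Rauch} and \cite{Sec96}; you should flag this (or justify it by tangential mollification), though the paper is equally terse on this point in the Rayleigh--Taylor case. Finally, for the later fixed-point argument the constant in \eqref{84} must tend to zero as $T\to 0$ (Remark \ref{r84}); your Gronwall-based derivation delivers this as well once the layerwise estimate is integrated over $[0,T]$.
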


\begin{proof}
If we consider the boundary condition \eqref{82} in homogeneous form, i.e., if we set $\overline{\varphi} =0$, then
\[
(\widehat{A}_1U,U)|_{\partial\Omega}=(\mathcal{E}_{12}\mathcal{U},\mathcal{U})|_{\partial\Omega}=2(pv_N)|_{x_1=0}=0.
\]
That is, the boundary condition \eqref{82} is nonnegative for \eqref{81}. As system \eqref{81} has one incoming characteristic and this is in agreement with the fact that we have one boundary condition we infer that the boundary condition \eqref{82} is maximally nonnegative (but not strictly dissipative). Following the classical argument, we reduce our problem (with non-zero $\overline{\varphi}$) to one with a homogeneous boundary condition (with  $\overline{\varphi}=0$) by subtracting from $U$ a more regular function $U^\sharp=(p^\sharp,0,\ldots ,0)\in H^{2}(\Omega_T)$ such that $p^\sharp=-\overline{\varphi}\partial_1\hat{p}$ on $\partial\Omega_T$. Since the boundary is characteristic of constant multiplicity, we may apply the result of \cite{Sec96} and we get the solution $U\in H^1_*(\Omega_T)$ with $U_n\in H^1(\Omega_T)$, where $H^1_*$ is the conormal Sobolev space equipped with the norm
$
\|u\|^2_{H^{1}_*(\Omega_T)}:=\int_0^T\nt u(s)\nt^2_{{\rm tan},1}\,{\rm d}s +\|\sigma\partial_1  u\|^2_{L^2(\Omega_T )}.
$
Taking then \eqref{68}--\eqref{omega} into account, we get estimate \eqref{84} and the solution $U\in H^1(\Omega_T)$.
\end{proof}

\begin{remark}{\rm
In fact, before obtaining the a priori estimate \eqref{84}, by using Gronwall's lemma, we first get the estimate
\[
\|{U}(t) \|^2_{H^{1}(\Omega)}\leq Ce^{CT}\left\{\|\mathfrak{f} \|^2_{H^{1}(\Omega_T)}+ \|\overline{\varphi}\|^2_{H^{3/2}(\partial\Omega_T)}\right\}
\]
whose integration over the interval $[0,T]$ gives estimate \eqref{84} with the constant $C=C(T)$ being proportional to $T^{1/2}$. That is, the constant $C=C(T)$ in the right-hand side of \eqref{84} tends to zero as $T\rightarrow 0$.
\label{r84}
}\end{remark}

The existence of the unique solution of problem \eqref{b48b}--\eqref{b51b}  is given by the following theorem.

\begin{theorem}
Let the basic state \eqref{a21} satisfies assumptions \eqref{29}, \eqref{30}, the non-collinearity condition \eqref{40} and assumption \eqref{31} for $j=\mu$ and $j=\nu $, where $\mu$ and $\nu$ are taken from \eqref{40}. Then, for all $\mathfrak{f} \in H^{1}(\Omega_T)$  vanishing in the past problem \eqref{b48b}--\eqref{b51b} has a unique solution $(U,\varphi )\in H^1(\Omega_T)\times H^{3/2}(\partial\Omega_T)$.
\label{t4}
\end{theorem}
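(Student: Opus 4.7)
The plan is to construct $(U,\varphi)$ by a Banach fixed-point argument that alternates between the auxiliary problem of Lemma \ref{lem1} and the front evolution made available by the non-collinearity condition. On the space
\[
X_T := \{\overline\varphi\in H^{3/2}(\partial\Omega_T) : \overline\varphi=0 \text{ for } t<0\},
\]
define a map $\mathcal{T}$ as follows. Given $\overline\varphi\in X_T$, apply Lemma \ref{lem1} to obtain the unique $U=U[\overline\varphi]\in H^1(\Omega_T)$ solving \eqref{81}--\eqref{83} with the given source $\mathfrak{f}$ and the boundary datum $-\overline\varphi\,\partial_1\hat p$. Then, taking traces on $\partial\Omega_T$, let $\varphi=\mathcal{T}(\overline\varphi)$ be the function recovered from the algebraic system \eqref{73} in the resolved form \eqref{74}, expressing $\nabla_{t,x'}\varphi$ linearly in terms of $v_N|_{x_1=0}$, $F_N^\mu|_{x_1=0}$, $F_N^\nu|_{x_1=0}$, the lower-order terms $R_\mu,R_\nu$ from Proposition \ref{p2}, and $\varphi$ itself, followed by integration in $t$ against the zero past data. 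By construction any fixed point of $\mathcal{T}$ yields a solution of \eqref{b48b}--\eqref{b51b}.

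First I verify that $\mathcal{T}$ sends $X_T$ into itself. By the trace theorem, $v_N|_{x_1=0}$ and $F_N^j|_{x_1=0}$ lie in $H^{1/2}(\partial\Omega_T)$, and Proposition \ref{p2} provides $R_\mu,R_\nu\in H^{1}(\partial\Omega_T)$ with the bound \eqref{60}. Because the non-collinearity condition \eqref{40} (together with Remark \ref{r3}) makes the $3\times 3$ matrix in \eqref{73} uniformly invertible on $\partial\Omega_T$, the resolved formula \eqref{74} gives $\nabla_{t,x'}\varphi\in H^{1/2}(\partial\Omega_T)$, and hence $\varphi\in H^{3/2}(\partial\Omega_T)$. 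A Gronwall argument absorbing the $\varphi$-dependence of $R_j$ in \eqref{60} closes the bound and shows $\mathcal{T}$ is well defined.

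Contraction is where the short-time hypothesis enters. For two inputs $\overline\varphi_1,\overline\varphi_2\in X_T$, the difference $U_1-U_2$ solves \eqref{81}--\eqref{83} with zero source and boundary datum $-(\overline\varphi_1-\overline\varphi_2)\partial_1\hat p$. Lemma \ref{lem1} in the sharpened form of Remark \ref{r84} then gives
\[
\|U_1-U_2\|_{H^1(\Omega_T)}\leq C\,T^{1/2}\,\|\overline\varphi_1-\overline\varphi_2\|_{H^{3/2}(\partial\Omega_T)},
\]
with $C$ independent of $T$. Applying the linear resolution of the preceding paragraph to the differences of traces and using the corresponding difference estimate for $R_j$ from Proposition \ref{p2}, I obtain
\[
\|\mathcal{T}(\overline\varphi_1)-\mathcal{T}(\overline\varphi_2)\|_{H^{3/2}(\partial\Omega_T)}\leq C\,T^{1/2}\,\|\overline\varphi_1-\overline\varphi_2\|_{H^{3/2}(\partial\Omega_T)}.
\]
Choosing $T$ so small that $CT^{1/2}<1$, the Banach fixed-point theorem produces a unique $\varphi\in X_T$, and the associated $U$ supplied by Lemma \ref{lem1} is the sought $H^1$ solution. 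Uniqueness in the class $H^1(\Omega_T)\times H^{3/2}(\partial\Omega_T)$ is then immediate from the a priori estimate \eqref{41} applied to the difference of two candidate solutions.

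The main obstacle I expect is the regularity bookkeeping for the front: the transport version of the first boundary condition in \eqref{b50b.4} alone would cost half a derivative, so only the ellipticity built into the non-collinearity condition, as encoded by \eqref{74}, permits recovery of $\varphi\in H^{3/2}$ from $U\in H^{1}$; verifying that the constants in the resolution step remain uniform in $T$, so that only the $T^{1/2}$ factor from Remark \ref{r84} drives the contraction, is the delicate point.
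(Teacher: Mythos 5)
Your overall skeleton (solve the auxiliary problem of Lemma \ref{lem1} for each given $\overline\varphi$, recover the front with a gain of regularity from the ellipticity encoded in \eqref{73}--\eqref{74}, and contract in $H^{3/2}(\partial\Omega_T)$ using that the relevant constants vanish as $T\rightarrow 0$, cf. Remark \ref{r84}) matches the paper's, but the step where you define $\mathcal{T}(\overline\varphi)$ has a genuine gap. You define $\varphi$ from the resolved form \eqref{74}, integrating its time component, and you take $R_\mu,R_\nu$ ``from Proposition \ref{p2}''. Proposition \ref{p2}, however, applies to solutions of the full problem \eqref{b48b}--\eqref{b51b}: its proof derives the transport equation \eqref{61} for $R_j$ by using the first (kinematic) boundary condition in \eqref{b50b.4}, which is precisely what your iterate $(U[\overline\varphi],\varphi)$ does not yet satisfy. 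So during the iteration the identity \eqref{59}, i.e.\ the first two rows of \eqref{73}, is not available and $R_\mu,R_\nu$ are not defined objects; at best you could couple an ODE for $\varphi$ with the transport equations \eqref{61}, but then \eqref{74} is no longer a consequence of anything --- it is an ansatz. Worse, at a fixed point you must verify that the kinematic condition $D_0(\hat v)\varphi-\varphi\,\partial_1\hat v_N=v_N$ actually holds: the time component of \eqref{74} is only a linear combination of that condition with the two constraint relations \eqref{59}, so imposing it does not return the boundary condition unless \eqref{59} is known independently --- and the derivation of \eqref{59} itself used the kinematic condition, a circularity your proposal does not break.

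The paper avoids this by defining the map the other way around: $\varphi$ is obtained from $\overline\varphi$ by solving the Cauchy problem \eqref{85}, \eqref{85'} --- i.e.\ the kinematic condition itself --- which only gives $\varphi\in H^{1/2}(\partial\Omega_T)$ with \eqref{86}; then, because this boundary condition now holds, the argument of Proposition \ref{p2} (in the form \eqref{61}, \eqref{87}) yields \eqref{59}, the system \eqref{73} is legitimate, and the non-collinearity resolution \eqref{74} is used only a posteriori to upgrade $\varphi$ to $H^{3/2}$ with the bound \eqref{88}. With that ordering, a fixed point automatically satisfies both boundary conditions in \eqref{b50b.4}, and your contraction (constants tending to zero with $T$) and uniqueness arguments then go through essentially as you wrote them. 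The fix is therefore to replace your definition of $\mathcal{T}$ by the transport definition \eqref{85}, \eqref{85'} and keep \eqref{74} solely as the regularity and contraction device.
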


\begin{proof}
We prove the existence of the solution to \eqref{b48b}--\eqref{b51b} by a fixed-point argument. Consider $\overline{\varphi}\in H^{3/2}(\partial\Omega_T)$ vanishing in the past. By Lemma \ref{lem1}, there exists a unique solution $U\in H^1(\Omega_T)$ of problem \eqref{81}--\eqref{83} enjoying the a priori estimate \eqref{84}. We now consider the Cauchy problem (cf. \eqref{b51b})
\begin{align}
D_0(\hat{v})\varphi - \varphi \partial_1\hat{v}_N ={v}_{N}& \qquad \mbox{on}\ \partial\Omega_T, \label{85} \\
\varphi =0& \qquad \mbox{for}\ t<0, \label{85'}
\end{align}
where ${v_N}_{|x_1=0}\in H^{1/2}(\partial\Omega_T)$ is the trace of the normal component of $v$ contained in the solution $U\in H^1(\Omega_T)$ of problem \eqref{81}--\eqref{83}. Clearly, there exists a unique solution $\varphi\in H^{1/2}(\partial\Omega_T)$ of \eqref{85}, \eqref{85'} such that
\begin{equation}\label{86}
\|\varphi\|_{H^{1/2}(\partial\Omega_T)}\leq C\|{v_N}_{|x_1=0}\|_{H^{1/2}(\partial\Omega_T)}.
\end{equation}

Using \eqref{85}, we can obtain equation \eqref{59} for $j=\mu$ and $j=\nu$. Moreover, the unique solution $R_j$ of  \eqref{61} obeys the estimate
\begin{equation}
\|R_j\|_{H^{1/2}(\partial\Omega_T)}\leq C\left\{\|\mathfrak{f}\|_{H^{1}(\Omega_T)} +\|\varphi\|_{H^{1/2}(\partial\Omega_T)}\right\}.
\label{87}
\end{equation}
That is, we get system \eqref{73} which implies \eqref{74}. In view of \eqref{87}, the trace theorem and \eqref{84}, from  \eqref{86} and \eqref{74} we obtain the estimate
\[
\begin{split}
\|\varphi\|_{H^{1/2}(\partial\Omega_T)}+\|\nabla_{t,x'}\varphi\|_{H^{1/2}(\partial\Omega_T)} &
\leq C\Bigl\{ \|U_{|x_1=0}\|_{H^{1/2}(\partial\Omega_T)}+\|R_\mu\|_{H^{1/2}(\partial\Omega_T)} \\  & \quad\qquad +\|R_\nu\|_{H^{1/2}(\partial\Omega_T)}+
\|\varphi\|_{H^{1/2}(\partial\Omega_T)}\Bigr\}\\
& \leq C\Bigl\{ \|\mathfrak{f} \|_{H^{1}(\Omega_T)}+ \|\overline{\varphi}\|_{H^{3/2}(\partial\Omega_T)}\Bigr\}.
\end{split}
\]
which yields
\begin{equation}\label{88}
\|\varphi\|_{H^{3/2}(\partial\Omega_T)}\leq  C\left\{ \|\mathfrak{f} \|_{H^{1}(\Omega_T)}+ \|\overline{\varphi}\|_{H^{3/2}(\partial\Omega_T)}\right\}.
\end{equation}
This defines a map $\overline{\varphi}\rightarrow \varphi$ in $H^{3/2}(\partial\Omega_T)$. Let $\overline{\varphi}^1,\; \overline{\varphi}^2\in H^{3/2}(\partial\Omega_T)$, and $(U^1,\varphi^1)$, $(U^2,\varphi^2)$ be the corresponding solutions of problem \eqref{81}--\eqref{83}, \eqref{85}, \eqref{85'}. Thanks to the linearity of this problem we obtain, as for \eqref{88},
\begin{equation}\label{90}
\|\varphi^1-\varphi^2\|_{H^{3/2}(\partial\Omega_T)}\leq  C (T)\|\overline{\varphi}^1-\overline{\varphi}^2\|_{H^{3/2}(\partial\Omega_T)},
\end{equation}
where $C(T)\rightarrow 0$ as $T\rightarrow 0$ because, as in \eqref{84} (see Remark \ref{r84}), the constants appearing in the right-hand sides of \eqref{86} and \eqref{87} tend to zero as $T\rightarrow 0$.
Then there exists a sufficiently short time $T>0$ such that $C(T)<1$ in \eqref{90} and, hence, the map $\overline{\varphi}\rightarrow \varphi$ has a unique fixed point by the contraction mapping principle. At this fixed point $\overline{\varphi}=\varphi$ problem \eqref{81}--\eqref{83}, \eqref{85}, \eqref{85'} coincides with \eqref{b48b}--\eqref{b51b}, i.e., $(U,\varphi)$ is the unique solution of problem \eqref{b48b}--\eqref{b51b}.
\end{proof}

It follows from Theorem \ref{t4} and \eqref{frakf} for $s=1$ that for all $(f,g) \in H^1(\Omega_T)\times H^{3/2}(\partial\Omega_T)$ vanishing in the past problem \eqref{34}--\eqref{36} has a unique solution $(\dot{U},\varphi )\in H^1(\Omega_T)\times H^{3/2}(\partial\Omega_T)$. Together with the deduced a priori estimate \eqref{41} for $(f,g) \in H^{3/2}(\Omega_T)\times H^{2}(\partial\Omega_T)$  this completes the proof of Theorem \ref{t1} under the fulfilment of the non-collinearity condition \eqref{40}.

\begin{remark}{\rm
We could improve estimate \eqref{41} by replacing the $H^1$ norm of $\varphi$ with its $H^{3/2}$ norm. This is, however, not necessary for the subsequent nonlinear analysis. On the other hand, we are not able to get the a priori estimate \eqref{43} corresponding to the regularity $ H^1(\Omega_T)\times H^{3/2}(\partial\Omega_T)$ of $(f,g)$ for which we have proved the existence of the solution because after passing to the volume integral in \eqref{72} we integrate by parts and cannot gain  ``1/2 derivative'' for $R_j$ in order to apply \eqref{87} instead of \eqref{60}.
\label{r6}
}\end{remark}

\section{Tame estimates}
\label{s4'}

\setcounter{subsubsection}{0}

\subsection{Tame a priori estimates for problem (\ref{34})--(\ref{36})}

We are going to derive tame a priori estimates in $H^s$ for problem \eqref{34}--\eqref{36}, with $s$ large enough.
These tame estimates (see Theorems \ref{t4.1} and \ref{t4.2} below) being, roughly speaking, linear in high norms (that are
multiplied by low norms) are with  a fixed loss of derivatives from the data $f$ and $g$ and with respect to the coefficients, i.e., with respect to the basic state (\ref{a21}).

\begin{theorem}
Let $T>0$ and $s\in \mathbb{N}$, with $s\geq 3$. Assume that the basic state $(\widehat{U} ,\hat{\varphi})\in
H^{s+3}(\Omega_T )\times H^{s+3}(\partial\Omega_T)$ satisfies assumptions \eqref{29}, \eqref{30}, the Rayleigh-Taylor sign condition \eqref{42} and
\begin{equation}
\|\widehat{U}\|_{H^6(\Omega_T )} +\|\hat{\varphi} \|_{H^{6}(\partial\Omega_T)}\leq \widehat{K},
\label{37}
\end{equation}
where $\widehat{K}>0$ is a constant. Let also the data $(f ,g)\in
H^{s}(\Omega_T )\times H^{s+1}(\partial\Omega_T)$ vanish in the past. Then there exists a positive constant $K_0$ that does not depend on $s$ and $T$ and there exists a constant $C(K_0) >0$ such that, if $\widehat{K}\leq K_0$, then there exists a unique solution $(\dot{U} ,\varphi)\in H^{s}(\Omega_T )\times H^{s}(\partial\Omega_T)$ to problem \eqref{34}--\eqref{36} that obeys the tame a priori  estimate
\begin{equation}
\begin{split}
\|\dot{U}\|_{H^s(\Omega_T )} & +\|\varphi\|_{H^{s}(\partial\Omega_T)}\leq  C(K_0)\Bigl\{
\|f\|_{H^{s}(\Omega_T )}+ \|g \|_{H^{s+1}(\partial\Omega_T)} \\
 &+\bigl( \|f\|_{H^{3}(\Omega_T )}+ \| g\|_{H^{4}(\partial\Omega_T)} \bigr)\bigl(
\|\widehat{U}\|_{H^{s+3}(\Omega_T )}+\|\hat{\varphi}\|_{H^{s+3}(\partial\Omega_T)}\bigr)\Bigr\}
\end{split}
\label{38}
\end{equation}
for a sufficiently short time $T$ (the constant $C(K_0)$ depends also on the fixed constants $\bar{\rho}_0$, $\bar{\rho}_1$ and $\epsilon$
from \eqref{29} and \eqref{42}).
\label{t4.1}
\end{theorem}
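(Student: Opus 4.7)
The plan is to lift the basic $H^1$ argument of Section~\ref{s4} to $H^s$, tracking all constants by Moser-type product and composition estimates so as to preserve the structure of a tame estimate. As in the $H^1$ case I first reduce to homogeneous boundary data by subtracting a lifting $\widetilde{U}=(\tilde p,\tilde v_1,0,\ldots,0)$ of the two conditions in \eqref{35} with $\varphi=0$. A tame lifting lemma provides
\[
\|\widetilde U\|_{H^{s+1}(\Omega_T)}\leq C\bigl(\|g\|_{H^{s+1/2}(\partial\Omega_T)}+\|g\|_{W^{1,\infty}}\|\hat\varphi\|_{H^{s+1/2}(\partial\Omega_T)}\bigr),
\]
so that \eqref{a87''} yields a tame analogue of \eqref{frakf} for the shifted source $\mathfrak f$, and I then work with the reduced system \eqref{b48b}--\eqref{b51b}.

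The tangential energy estimate is obtained by applying $\partial_{\mathrm{tan}}^\alpha=\partial_t^{\alpha_0}\partial_2^{\alpha_2}\partial_3^{\alpha_3}$ for $|\alpha|\le s$ to \eqref{b48b'} and performing the symmetric energy identity as in \eqref{63}. All commutators $[\partial_{\mathrm{tan}}^\alpha,\widehat{\mathcal A}_k]\mathcal U$ and the contributions of $\widehat{\mathcal A}_4$ are controlled by Moser's inequality
\[
\|[\partial_{\mathrm{tan}}^\alpha,\hat a]u\|_{L^2(\Omega_T)}\le C\bigl(\|\hat a\|_{W^{1,\infty}}\|u\|_{H^{s-1}_{\mathrm{tan}}}+\|u\|_{L^{\infty}}\|\hat a\|_{H^{s}_{\mathrm{tan}}}\bigr),
\]
combined with composition estimates for $\hat a=a(\widehat U,\nabla\widehat\Psi)$. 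Under \eqref{37} the Sobolev embedding $H^6\hookrightarrow W^{2,\infty}$ gives $\|\widehat U\|_{W^{2,\infty}}+\|\hat\varphi\|_{W^{2,\infty}}\le C\widehat K$, so that choosing $\widehat K\le K_0$ keeps the $W^{1,\infty}$-prefactors arbitrarily small and lets the top-order, coefficient-dependent error terms be absorbed on the left-hand side. The critical boundary contribution is handled exactly as in the passage from \eqref{b50b.4} to \eqref{72}: the second boundary condition lets me replace $\partial_{\mathrm{tan}}^\alpha p|_{x_1=0}$ by $-\partial_1\hat p\,\partial_{\mathrm{tan}}^\alpha\varphi+\text{l.o.t.}$, and integrating by parts in $t,x_2,x_3$ as in the paragraph after \eqref{63} produces a positive definite boundary energy $\int_{\partial\Omega}\partial_1\hat p|_{x_1=0}(\partial_{\mathrm{tan}}^\alpha\varphi)^2\,dx'$ thanks to \eqref{42}; the remaining boundary commutators are again tame in $\hat p$.

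For the normal derivatives I follow the two-step mechanism of Section~\ref{s4}. The noncharacteristic pair $(p,v_N)$ is recovered from \eqref{n64} using $\widehat{\mathfrak A}_1|_{x_1=0}=0$ and the weight $\sigma$ just as in \eqref{66}--\eqref{66'}; applying $\partial_{\mathrm{tan}}^\alpha$ with $|\alpha|\le s-1$ and invoking Moser on the coefficient products produces the tame analogue of \eqref{67}. The characteristic components $v_2,v_3,F_j$ are then controlled via the auxiliary divergence/vorticity/curl system \eqref{68}--\eqref{70}, whose transport field $\hat w$ has vanishing first component on $x_1=0$ so that no boundary condition is needed; $\partial_{\mathrm{tan}}^\alpha$-energy estimates for $(\xi_j,\omega,\eta_j)$ with $|\alpha|\le s-1$, with Moser bounds on the l.o.t., yield a tame upgrade of \eqref{omega}. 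Summing, applying Gronwall, and undoing the lifting $\widetilde U$ then yields \eqref{38} for $T$ small.

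The chief obstacle is the systematic Moser bookkeeping. Every entry of $\widehat{\mathcal A}_k,\widehat{\mathcal A}_4$, every factor $1/\partial_1\widehat\Phi$, and every entry of $\widehat{\mathfrak j}$ is a nonlinear smooth function of $(\widehat U,\nabla\widehat\Psi)$, and naive product estimates would produce forbidden terms of the form $\|\widehat U\|_{H^s}\|\dot U\|_{H^s}$. One must split each such product into a low-norm factor (bounded by $C\widehat K$ through \eqref{37} and Sobolev embedding) times a single high-norm factor, and use the smallness $\widehat K\le K_0$ to absorb the ``small $L^\infty$ times top $H^s$'' contributions on the left; the remaining ``low unknown norm times high coefficient norm'' contributions line up exactly with the term $\bigl(\|f\|_{H^3}+\|g\|_{H^4}\bigr)\bigl(\|\widehat U\|_{H^{s+3}}+\|\hat\varphi\|_{H^{s+3}}\bigr)$ on the right of \eqref{38}. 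Verifying at each step that no more than three derivatives hit $(f,g)$ and no more than $s+3$ derivatives hit $(\widehat U,\hat\varphi)$ is the delicate accounting required; the rest of the argument is a faithful $H^s$ reprise of the Section~\ref{s4} template.
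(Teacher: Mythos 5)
Your overall route is the paper's: reduce to homogeneous boundary conditions, apply $\partial^{\alpha}_{\rm tan}$ with $|\alpha|\le s$ to the reformulated system \eqref{b48b'}, control commutators and the zero-order term by Moser-type calculus, use the second condition in \eqref{b50b.4} together with the Rayleigh-Taylor condition \eqref{42} to extract a positive boundary energy in $\partial^{\alpha}_{\rm tan}\varphi$, recover $\partial_1(p,v_N)$ from \eqref{n64} (using $\widehat{\mathfrak{A}}_1|_{x_1=0}=0$ and the weight $\sigma$), recover the characteristic components through \eqref{68}--\eqref{70}, and close with Gronwall for $T$ small. Up to that point the proposal is a faithful $H^s$ reprise of Section \ref{s4}, exactly as in the paper.

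There is, however, a genuine gap in how you close the tame estimate. You assert that $\widehat K\le K_0$ makes the $W^{1,\infty}$ prefactors of the coefficients ``arbitrarily small'' and that this smallness absorbs the top-order coefficient-dependent errors into the left-hand side. That is not available: $K_0$ is a fixed constant, not a small parameter (in the Nash-Moser application of Section \ref{s5^} one takes $K_0=2C_*$, the size of the approximate solution), so no smallness of the coefficients may be used. Nor is it needed: in the paper the commutator terms carrying $W^{1,\infty}$ norms of the coefficients simply feed Gronwall's lemma with constant $C(K)$, while the terms carrying $\|{\rm coeff}\|_{s+3}$ come multiplied by \emph{low norms of the unknown and of $\mathfrak f$}, cf.\ \eqref{c42}, \eqref{frakR}, \eqref{n61}. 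The step you omit---and which is the actual purpose of hypothesis \eqref{37}---is the final bootstrap: after Gronwall and time integration one arrives at \eqref{c54}, in which $\|U\|_{H^3(\Omega_T)}+\|\varphi\|_{H^3(\partial\Omega_T)}$ (not a data norm) multiplies $\|\widehat U\|_{H^{s+3}}+\|\hat\varphi\|_{H^{s+3}}$; the $T^{1/2}$ factor produced via \eqref{elin} is what allows the unaccompanied low unknown norms to be absorbed on the left for $T$ small, and one must then \emph{apply the resulting estimate at $s=3$}, where $\|{\rm coeff}\|_{s+3}=\|{\rm coeff}\|_{6}\le\widehat K\le K_0$ by \eqref{37}, to obtain \eqref{c55}, i.e.\ $\|U\|_{H^3}+\|\varphi\|_{H^3}\le C(K_0)\|\mathfrak f\|_{H^3}$, and only after substituting this back does one reach \eqref{38'} and, via \eqref{fHs} and the lifting, the stated estimate \eqref{38}. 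Your claim that the ``low unknown norm times high coefficient norm'' contributions ``line up exactly'' with $\bigl(\|f\|_{H^3}+\|g\|_{H^4}\bigr)\bigl(\|\widehat U\|_{H^{s+3}}+\|\hat\varphi\|_{H^{s+3}}\bigr)$ silently presupposes this self-improvement, so as written the closing of the estimate is unjustified. A smaller omission: the theorem also asserts existence and uniqueness in $H^s$; the paper settles this by remarking that the existence argument of Theorem \ref{t1} extends to higher regularity, a point your proposal does not address at all.
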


\begin{remark}{\rm
Theorem \ref{t4.1} looks similar to that in \cite{Tcpam} for the free boundary problem for the compressible Euler equations with a vacuum boundary condition. As in \cite{Tcpam}, and unlike Theorem \ref{t1}, here we prefer to work with integer indices of Sobolev spaces, i.e., we derive a little bit roughened version of the tame estimate where, in particular, we loose one but not ``half'' derivative from $g$. We do so just for technical convenience because an additional gain of ``half'' derivative in the local-in-time existence theorem is not really principal (e.g., from the physical point of view).
\label{r4}
}\end{remark}

\begin{theorem}
Let $T>0$ and $s\in \mathbb{N}$, with $s\geq 3$. Assume that the basic state \eqref{a21} satisfies assumptions \eqref{29}, \eqref{30}, the non-collinearity condition \eqref{40} and assumption \eqref{31} for $j=\mu$ and $j=\nu $, where $\mu$ and $\nu$ are taken from \eqref{40} and
\begin{equation}
\|\widehat{U}\|_{H^5(\Omega_T )} +\|\hat{\varphi} \|_{H^{5}(\partial\Omega_T)}\leq \widehat{K},
\label{37'}
\end{equation}
where $\widehat{K}>0$ is a constant. Let also the data $(f ,g)\in
H^{s+1}(\Omega_T )\times H^{s+2}(\partial\Omega_T)$ vanish in the past. Then there exists a positive constant $K_0$ that does not depend on $s$ and $T$ and there exists a constant $C(K_0) >0$ such that, if $\widehat{K}\leq K_0$, then there exists a unique solution $(\dot{U} ,\varphi)\in H^{s}(\Omega_T )\times H^{s}(\partial\Omega_T)$ to problem \eqref{34}--\eqref{36} that obeys the tame a priori  estimate
\begin{equation}
\begin{split}
\|\dot{U}\|_{H^s(\Omega_T )} & +\|\varphi\|_{H^{s}(\partial\Omega_T)}\leq  C(K_0)\Bigl\{
\|f\|_{H^{s+1}(\Omega_T )}+ \|g \|_{H^{s+2}(\partial\Omega_T)} \\
 &+\bigl( \|f\|_{H^{4}(\Omega_T )}+ \| g\|_{H^{5}(\partial\Omega_T)} \bigr)\bigl(
\|\widehat{U}\|_{H^{s+2}(\Omega_T )}+\|\hat{\varphi}\|_{H^{s+2}(\partial\Omega_T)}\bigr)\Bigr\}
\end{split}
\label{38"}
\end{equation}
for a sufficiently short time $T$ (the constant $C(K_0)$ depends also on the fixed constants $\bar{\rho}_0$, $\bar{\rho}_1$ and $\delta$
from \eqref{29} and \eqref{40}).
\label{t4.2}
\end{theorem}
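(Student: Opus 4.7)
The plan is to parallel the derivation of the basic a priori estimate \eqref{41} in the non-collinearity case, but now carry every step out at the $H^s$ level while systematically replacing ordinary products by Moser-type (tame) product and composition inequalities, so that only one extra derivative is charged to the data $(f,g)$ and two extra derivatives to the basic state $(\widehat U,\hat\varphi)$, in accordance with the target estimate \eqref{38"}. Concretely, I would first apply a pure tangential derivative $\partial^{\alpha}_{\rm tan}=\partial_t^{\alpha_0}\partial_2^{\alpha_2}\partial_3^{\alpha_3}$ of arbitrary order $|\alpha|\leq s$ to the symmetrized system \eqref{b48b'} and run the $L^2$ energy estimate; since $\widehat{\mathcal A}_1|_{x_1=0}=\mathcal E_{12}$ by \eqref{a1}, the boundary contribution reduces, as in \eqref{71}, to $-2\int_{\partial\Omega_t}(\partial^{\alpha}_{\rm tan}p)(\partial^{\alpha}_{\rm tan}v_N)|_{x_1=0}\,dx'ds$ plus commutators $[\partial^{\alpha}_{\rm tan},\widehat{\mathcal A}_k]$, which are controlled by $\|uv\|_{H^s}\le C(\|u\|_{L^\infty}\|v\|_{H^s}+\|u\|_{H^s}\|v\|_{L^\infty})$ together with the smallness hypothesis $\widehat K\leq K_0$ that fixes the low norms.

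The central step is the boundary closure. Using the second boundary condition in \eqref{b50b.4} together with the non-collinearity condition \eqref{40} I would reproduce \eqref{74} at the $\partial^{\alpha}_{\rm tan}$-level, expressing $\partial^{\alpha}_{\rm tan}\nabla_{t,x'}\varphi$ in terms of $\partial^{\alpha}_{\rm tan}(F_N^\mu-R_\mu)$, $\partial^{\alpha}_{\rm tan}(F_N^\nu-R_\nu)$, $\partial^{\alpha}_{\rm tan}v_N$, $\partial^{\alpha}_{\rm tan}\varphi$ and commutators handled by Moser. This converts the dangerous boundary integral $\int\partial_1\hat p\,\partial^{\alpha}_{\rm tan}\varphi\,\partial^{\alpha}_{\rm tan}v_N$ into a sum of lower-order boundary integrals exactly of the types \eqref{bterms}; each of these is pushed to the volume and integrated by parts as in the proof of \eqref{41}, producing only tangential $H^s$ quantities plus further commutators. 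The function $R_j$ is treated by a tame analogue of Proposition \ref{p2}: differentiating \eqref{61} and running a standard energy argument for the transport operator $D_0(\hat v)$ gives $\|R_j\|_{H^s(\partial\Omega_T)}\le C(\widehat K)(\|f\|_{H^{s+1/2}}+\|\varphi\|_{H^s})+(\mathrm{low})(\|\widehat U\|_{H^{s+2}}+\|\hat\varphi\|_{H^{s+2}})$, which is the shape required by \eqref{38"}. The coefficients $\hat a_j$ of \eqref{74} depend on traces of $\widehat U$ and on $\hat\varphi$, so a tame composition estimate yields $\|\hat a_j\|_{H^s(\partial\Omega_T)}\leq C(\widehat K)(\|\widehat U\|_{H^{s+1}}+\|\hat\varphi\|_{H^{s+2}})$, matching the $+2$ loss allowed on the basic state.

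The missing normal derivatives are recovered, as at the $H^1$ level, in two blocks. For the noncharacteristic unknown $U_n=(p,v_N)$, the identity \eqref{n64} together with $\widehat{\mathfrak A}_1|_{x_1=0}=0$ lets one estimate $\partial_1^k U_n$ inductively in $k$ in terms of tangential $H^s$ norms of $U$ and of the weighted quantity $\sigma\partial_1 U$, where $\sigma$ vanishes on the boundary and needs no boundary information. For the characteristic components $v_2,v_3,F_j$, the divergence/curl equations \eqref{68}--\eqref{70} for $\xi_j$, $\omega$, $\eta_j$ — which require no boundary condition thanks to $\hat w_1|_{x_1=0}=0$ — are symmetric hyperbolic systems whose $H^{s-1}$ energy estimates (with source terms bounded by tame Moser inequalities) upgrade the $H^s$ tangential bound of $U$ to a full $H^s$ bound. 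Collecting these pieces yields an inequality $\mathcal I_s(t)\le\varepsilon\mathcal I_s(t)+C(K_0)\mathcal N_s(t)$ with $\mathcal N_s$ the tame data functional on the right of \eqref{38"}; after absorbing $\varepsilon\mathcal I_s$ and applying Gronwall's lemma for short $T$ we obtain \eqref{38"}. Existence of the $H^s$ solution itself follows by combining the $H^1$ existence established in Theorem \ref{t4} with this a priori tame estimate via a standard Friedrichs mollification.

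The hard part will be the bookkeeping of derivatives in the boundary closure. Three derivative-costs interact there: the commutators $[\partial^{\alpha}_{\rm tan},\widehat{\mathcal A}_k]$, the composition structure of $\hat a_j$ in \eqref{74}, and the tame estimate for $R_j$ arising from \eqref{61}. Each must be arranged so that no term contains more than one "high" factor, and so that the final tally charges at most $s+2$ derivatives to $(\widehat U,\hat\varphi)$ and $s+1$ to $(f,g)$. This forces delicate use of Gagliardo--Nirenberg interpolation between the fixed low norm controlled by $\widehat K\leq K_0$ and the high norms, together with repeated application of the basic estimate \eqref{41} at intermediate regularities (e.g.\ with data in $H^4\times H^5$) to replace intermediate-order Sobolev norms of $(\dot U,\varphi)$ by intermediate-order norms of $(f,g)$ when they appear multiplied by a high-order norm of the basic state. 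Once this bookkeeping is carried out, the resulting tame inequality has precisely the form \eqref{38"}.
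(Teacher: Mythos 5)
Your proposal is correct and follows essentially the same route as the paper's proof: tangential $H^s$ energy estimates for the symmetrized system \eqref{b48b'}, closure of the boundary term via the front-symbol resolution \eqref{74} and a higher-order version of the $R_j$ estimate (cf. \eqref{60'}), recovery of normal derivatives through \eqref{n64} and the div/curl system \eqref{68}--\eqref{70}, followed by absorption and Gronwall for short $T$. The only cosmetic difference is in the final bookkeeping, where the paper invokes its own tame estimate at $s=3$ (rather than interpolation against \eqref{41}) to convert low norms of $(U,\varphi)$ into low norms of the data, which is the same idea you describe.
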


\subsection{Tame a priori estimates for the reduced problem (\ref{b48b})--(\ref{b51b})}

We first prove tame estimates for the reduced linearized problem \eqref{b48b}--\eqref{b51b} with homogeneous boundary conditions. After that we get tame estimates for problem \eqref{34}--\eqref{36} by using  the roughened version of \eqref{tildU} (see Remark \ref{r4})
\begin{equation}
\|\widetilde{U} \|_{H^{s+1}(\Omega_T)}\leq C\|g \|_{H^{s+1}(\partial\Omega_T)}.
\label{tildU'}
\end{equation}

That is, from now on we concentrate on the proof of tame estimates for the reduced problem \eqref{b48b}--\eqref{b51b}. Namely, we will prove the following lemmata.

\begin{lemma}
Let $T>0$ and $s\in \mathbb{N}$, with $s\geq 3$. Assume that the basic state $(\widehat{U} ,\hat{\varphi})\in
H^{s+3}(\Omega_T )\times H^{s+3}(\partial\Omega_T)$ satisfies assumptions satisfies assumptions \eqref{29}, \eqref{30}, the Rayleigh-Taylor sign condition \eqref{42} and inequality \eqref{37}.
Let also $\mathfrak{f}\in H^{s}(\Omega_T )$ vanishes in the past. Then there exists a positive constant $K_0$ that does not depend on $s$ and $T$ and there exists a constant $C(K_0) >0$ such that, if $\widehat{K}\leq K_0$, then there exists a unique solution $(U ,\varphi)\in H^{s}(\Omega_T )\times H^{s}(\partial\Omega_T)$ to problem \eqref{b48b}--\eqref{b51b} that obeys the tame a priori estimate
\begin{equation}
\begin{split}
\|{U} & \|_{H^s(\Omega_T )}+  \|\varphi\|_{H^{s}(\partial\Omega_T)}\\ &\leq  C(K_0)\Bigl\{
\|\mathfrak{f}\|_{H^{s}(\Omega_T )} +\|\mathfrak{f}\|_{H^{3}(\Omega_T )}\bigl(
\|\widehat{U}\|_{H^{s+3}(\Omega_T )}+\|\hat{\varphi}\|_{H^{s+3}(\partial\Omega_T)}\bigr)\Bigr\}
\end{split}
\label{38'}
\end{equation}
for a sufficiently short time $T$ (the constant $C(K_0)$ depends also on the fixed constants $\bar{\rho}_0$, $\bar{\rho}_1$ and $\epsilon$
from \eqref{29} and \eqref{42}).
\label{l3.1}
\end{lemma}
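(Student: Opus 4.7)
The plan is to derive the tame estimate \eqref{38'} by prolonging the symmetric system \eqref{b48b'} to tangential derivatives of order $s$, emulating the $H^1$-level argument that led to \eqref{64}, while tracking the coefficient dependence through Moser-type product inequalities. Existence of an $H^s$ solution itself is a byproduct: the $H^1$ well-posedness established in Theorem \ref{t1} under the Rayleigh-Taylor sign condition already produces a solution, and the estimate derived below certifies its $H^s$ regularity via a standard tangential mollification/regularization argument.

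First, for each multi-index $\alpha$ in the tangential variables $(t,x_2,x_3)$ with $|\alpha|\leq s$, I would apply $\partial^\alpha_\tau$ to \eqref{b48b'} and run the standard symmetrizer argument. Integration by parts produces the boundary contribution $-2\int_{\partial\Omega_t}(\partial^\alpha_\tau p\,\partial^\alpha_\tau v_N)|_{x_1=0}\,dx'ds$, exactly as in \eqref{71}. Using the boundary conditions \eqref{b50b.4} and the Rayleigh--Taylor sign condition \eqref{42}, this boundary term is converted into a coercive $\int_{\partial\Omega}\partial_1\hat{p}|_{x_1=0}\,(\partial^\alpha_\tau\varphi)^2\,dx'$ plus transport and lower-order terms, exactly as in the $L^2$ argument culminating in \eqref{62}. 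Summing over $|\alpha|\leq s$ produces an inequality of the schematic form
\begin{equation*}
\nt U(t)\nt^2_{\mathrm{tan},s}+\|\varphi(t)\|^2_{H^s(\partial\Omega)}\leq C\|\mathfrak{f}\|^2_{H^s(\Omega_T)}+C\mathcal{R}_s(t)+C\int_0^t\bigl(\nt U\nt^2_{\mathrm{tan},s}+\|\varphi\|^2_{H^s(\partial\Omega)}\bigr)ds,
\end{equation*}
where $\nt\cdot\nt_{\mathrm{tan},s}$ is the natural extension of the norm defined after \eqref{65} and $\mathcal{R}_s(t)$ collects all commutator contributions arising from $[\partial^\alpha_\tau,\widehat{A}_\beta]$ and the $\widehat{\mathcal{C}}$ coupling.

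The key tame step is to bound $\mathcal{R}_s(t)$ via the Moser product inequality
\begin{equation*}
\|ab\|_{H^s}\leq C\bigl(\|a\|_{L^\infty}\|b\|_{H^s}+\|a\|_{H^s}\|b\|_{L^\infty}\bigr),
\end{equation*}
combined with the Sobolev embedding $H^3(\Omega_T)\hookrightarrow W^{1,\infty}(\Omega_T)$ in the $4$-dimensional space-time, which yields schematically
\begin{equation*}
\mathcal{R}_s(t)\leq C\bigl(\|\widehat{U}\|_{W^{2,\infty}}+\|\hat{\varphi}\|_{W^{3,\infty}}\bigr)\|U\|^2_{H^s}+C\|U\|^2_{H^3}\bigl(\|\widehat{U}\|^2_{H^{s+2}}+\|\hat{\varphi}\|^2_{H^{s+2}}\bigr).
\end{equation*}
The smallness hypothesis $\widehat{K}\leq K_0$ makes the prefactor of $\|U\|^2_{H^s}$ small, so that term is absorbed after Gronwall. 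The low-norm factor $\|U\|_{H^3}$ is controlled by $C\|\mathfrak{f}\|_{H^3(\Omega_T)}$ by a bootstrap use of the same estimate at the base level $s=3$ (which, for $s=3$, reduces to the non-tame $H^3$ bound already implicit in the $H^1$ argument of Section \ref{s4}).

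Finally, the normal derivatives are recovered in the two-step fashion of the $H^1$ proof. The non-characteristic block $U_n=(p,v_N)$ is read off from \eqref{n64}, now differentiated up to order $s-1$; the identity $\widehat{\mathfrak{A}}_1|_{x_1=0}=0$ ensures that the dangerous $\partial_1$ contribution carries a factor $\sigma$, which via the weight argument \eqref{66}--\eqref{66'} feeds back only into tangential norms. The remaining characteristic normal derivatives are controlled through the divergence--curl system \eqref{68}--\eqref{70}, a transport system in the ``time-tangential'' direction (since $\hat{w}_1|_{x_1=0}=0$) for which no boundary conditions are needed, and for which the energy method combined with Moser estimates produces the required tame control. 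Gronwall's lemma then closes the estimate to give \eqref{38'}. The main obstacle is the precise bookkeeping of the fixed three-derivative loss from $\widehat{U}$ and $\hat{\varphi}$: each of the three reduction layers (tangential energy, non-characteristic recovery from \eqref{n64}, and the curl--div auxiliary system) costs one coefficient derivative through the Moser inequality, and they must be arranged so that every occurrence of a high $H^s$ norm of $U$ is multiplied only by a low $L^\infty$-type norm of the coefficients---a structure enforced precisely by the smallness hypothesis $\widehat{K}\leq K_0$ and by the bootstrap control of $\|U\|_{H^3}$ by $\|\mathfrak{f}\|_{H^3}$.
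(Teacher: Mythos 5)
Your proposal follows essentially the same route as the paper's proof: order-$s$ tangential energy estimates with Moser-type commutator bounds, the Rayleigh--Taylor term yielding control of $\partial^{\alpha}_{\rm tan}\varphi$, recovery of $\partial_1 U_n$ from the structure of $\widehat{\mathcal{A}}_1$ (with $\widehat{\mathfrak{A}}_1|_{x_1=0}=0$ and the weight $\sigma$), the div--curl transport system \eqref{68}--\eqref{70} for the characteristic part, Gronwall, and the $s=3$ bootstrap via \eqref{37} to control the low norm by $\|\mathfrak{f}\|_{H^{3}(\Omega_T)}$. The only (harmless) bookkeeping differences are that the paper absorbs the low-norm terms by taking $T$ small (through the factor $T^{1/2}e^{C(K)T}$ obtained after integrating the Gronwall estimate), not by smallness of $\widehat{K}$, and that the boundary commutators such as $[\partial^{\alpha}_{\rm tan},\partial_1\hat{p}]\varphi$ are estimated via the trace theorem, which is precisely where the coefficient loss $\|\widehat{U}\|_{H^{s+3}(\Omega_T)}$ (rather than your $H^{s+2}$) in \eqref{38'} originates.
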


\begin{lemma}
Let $T>0$ and $s\in \mathbb{N}$, with $s\geq 3$. Assume that the basic state $(\widehat{U} ,\hat{\varphi})\in
H^{s+2}(\Omega_T )\times H^{s+2}(\partial\Omega_T)$ satisfies satisfies assumptions \eqref{29}, \eqref{30}, the non-collinearity condition \eqref{40}, assumption \eqref{31} for $j=\mu$ and $j=\nu $  (where $\mu$ and $\nu$ are taken from \eqref{40}) and inequality \eqref{37'}.
Let also $\mathfrak{f}\in H^{s+1}(\Omega_T )$ vanishes in the past. Then there exists a positive constant $K_0$ that does not depend on $s$ and $T$ and there exists a constant $C(K_0) >0$ such that, if $\widehat{K}\leq K_0$, then there exists a unique solution $(U ,\varphi)\in H^{s}(\Omega_T )\times H^{s}(\partial\Omega_T)$ to problem \eqref{b48b}--\eqref{b51b} that obeys the tame a priori estimate
\begin{equation}
\begin{split}
\|{U} & \|_{H^s(\Omega_T )}+  \|\varphi\|_{H^{s}(\partial\Omega_T)}\\ &\leq  C(K_0)\Bigl\{
\|\mathfrak{f}\|_{H^{s+1}(\Omega_T )} +\|\mathfrak{f}\|_{H^{4}(\Omega_T )}\bigl(
\|\widehat{U}\|_{H^{s+2}(\Omega_T )}+\|\hat{\varphi}\|_{H^{s+2}(\partial\Omega_T)}\bigr)\Bigr\}.
\end{split}
\label{38''}
\end{equation}
for a sufficiently short time $T$ (the constant $C(K_0)$ depends also on the fixed constants $\bar{\rho}_0$, $\bar{\rho}_1$ and $\delta$
from \eqref{29} and \eqref{40}).
\label{l3.2}
\end{lemma}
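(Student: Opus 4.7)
The plan is to extend the $H^1$ energy argument of Section 4.4 to the $H^s$ level while tracking the dependence on the coefficients in a Moser (tame) fashion. I would proceed along the same lines as the proof of estimate \eqref{41}, but applying tangential differential operators $\partial^\alpha$ with $|\alpha|\leq s$, where $\alpha$ acts only in the variables $(t,x_2,x_3)$, and then recovering normal derivatives at the end. Concretely, applying $\partial^\alpha$ to system \eqref{b48b'} and taking the standard $L^2$ scalar product with $\partial^\alpha\mathcal{U}$ produces, after symmetric-hyperbolic integration by parts, an integrated energy identity of the form
\[
I_\alpha(t)-2\int\limits_{\partial\Omega_t}(\partial^\alpha p\,\partial^\alpha v_N)|_{x_1=0}\,\mathrm{d}x'\,\mathrm{d}s\leq C\,\mathcal{M}_s(t),
\]
where $\mathcal{M}_s(t)$ is built out of $\|\mathfrak{f}\|_{H^{s+1}(\Omega_T)}$, $\int_0^t\|U\|^2_{H^s}\,\mathrm{d}s$, and commutator terms $[\partial^\alpha,\widehat{\mathcal{A}}_k]\partial_k\mathcal{U}$. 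The commutators and the coefficient contributions are handled by the classical Moser-type product and composition inequalities applied to the smooth nonlinear functions of $\widehat{U},\widehat{\Psi}$ that enter $\widehat{\mathcal{A}}_\alpha,\widehat{\mathcal{A}}_4$; this is precisely what produces the structure ``high norm times low norm'' on the right-hand side of \eqref{38''}, with low norms controlled through \eqref{37'} by $\widehat{K}\leq K_0$.

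Next I would deal with the boundary integral exactly as in Section 4.4. Combining the linearized $F_N^j$-identity \eqref{59} (applied with the Proposition \ref{p2} estimate upgraded to $H^s$ regularity by differentiating equation \eqref{61}) with the kinematic boundary condition in \eqref{b50b.4}, the non-collinearity hypothesis \eqref{40} lets me invert the $3\times 3$ algebraic system \eqref{73} and obtain the analogue of \eqref{74} for $\partial^\alpha \nabla_{t,x'}\varphi$, modulo commutators $[\partial^\alpha,\hat a_i]$. Substituting this into the boundary integral and repeatedly using the trick ``pass from the boundary integral to a volume integral by $\partial_1$, then integrate by parts in the tangential direction'' (as already done in the derivation of \eqref{77}) allows the control $\bigl|\int_{\partial\Omega_t}\cdots\bigr|\leq \tilde\varepsilon\|U\|^2_{H^s(\Omega_t)}+\tilde\varepsilon^{-1}\mathcal{N}_s(t)$, where $\mathcal{N}_s(t)$ is again tame in $s$. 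Absorption of the $\tilde\varepsilon$ term on the left-hand side by taking $\tilde\varepsilon$ small, combined with the time integral of $\|U\|_{H^s}^2$, is then handled by Gronwall.

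To close the estimate I still need to estimate the normal derivatives of $U$, i.e.\ to upgrade the tangential $H^s_{\mathrm{tan}}$ control to full $H^s$. For the noncharacteristic part $U_n=(p,v_N)$ this follows by solving equation \eqref{n64} (and its tangentially-differentiated version) for $\partial_1 U_n$ and using $\widehat{\mathfrak A}_1|_{x_1=0}=0$ together with the weight $\sigma$, exactly as in the derivation of \eqref{66} and \eqref{66'}, with Moser estimates producing the tame structure. For the characteristic components $v_2,v_3,F_j$ I would prolong the system for the linearized divergences $\xi_j$ and the coupled symmetric hyperbolic system \eqref{68}--\eqref{70} for $\omega,\eta_j$: since $\hat w_1|_{x_1=0}=0$, these transport systems need no boundary data, and a standard tangential $H^{s-1}$ energy argument with Moser products yields the missing normal derivatives in tame form, as in \eqref{omega}. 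Gronwall's lemma and the short time $T$ then deliver \eqref{38''}.

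The main obstacle is the boundary step: differentiating the resolving relation \eqref{74} $s$ times forces us to handle commutators $[\partial^\alpha,\hat a_i]$ where $\hat a_i$ depends nonlinearly on $\widehat{U}|_{x_1=0}$ and $\hat\varphi$, and simultaneously to control $\partial^\alpha R_\mu,\partial^\alpha R_\nu$ through a tame version of \eqref{60} obtained by prolongation of the transport equation \eqref{61}. Keeping the loss of derivatives fixed (one derivative from $\mathfrak f$, two from the coefficients) requires each Moser estimate to land exactly one extra derivative on the factor whose low norm is bounded by $\widehat{K}$; this bookkeeping, together with the smallness requirement $\widehat{K}\leq K_0$ needed to absorb the terms that are quadratic in $\widehat U,\hat\varphi$, is the only genuinely delicate point of the argument, whereas the structural parts (non-collinear resolution at the boundary and divergence/curl recovery of normal derivatives) are direct higher-order analogues of what was already done for the $H^1$ estimate \eqref{41}.
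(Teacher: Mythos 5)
Your proposal follows essentially the same route as the paper's proof: tangential $H^s$ energy estimates with Moser-type commutator bounds, resolution of the elliptic front symbol via \eqref{74} together with a higher-order (tame) version of the $R_j$ estimate \eqref{60}, the boundary-to-volume integration-by-parts trick from the $H^1$ case, recovery of the normal derivatives from \eqref{n64} and the divergence/curl system \eqref{68}--\eqref{70}, and $\tilde{\varepsilon}$-absorption plus Gronwall on a short time interval. The only cosmetic difference is that in the paper the absorption is achieved by taking $\tilde{\varepsilon}$ and $T$ small, with $\widehat{K}\leq K_0$ and \eqref{37'} serving merely to fix the constants in the low norms (not as a genuine smallness hypothesis on the coefficients), but this does not affect the argument.
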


\subsection{Proof of the tame estimate under the fulfilment of the Rayleigh-Taylor sign condition}

Since arguments of the energy method below are quite standard, we will omit detailed calculations. By applying to system \eqref{b48b'} the operator $\partial^{\alpha}_{\rm tan}=\partial_t^{\alpha_0}\partial_2^{\alpha_2}\partial_3^{\alpha_3}$ with $|\alpha |=
|(\alpha_0,\alpha_2, \alpha_3)|\leq s$ and taking into account \eqref{30} and \eqref{a1}, one gets
\begin{equation}
\int\limits_{\Omega}\widehat{\cal A}_0\partial^{\alpha}_{\rm tan}\mathcal{U}\cdot\partial^{\alpha}_{\rm tan}\mathcal{U} \, {\rm d}x - 2\int\limits_{\partial\Omega_t}\partial^{\alpha}_{\rm tan}p\,\partial^{\alpha}_{\rm tan}{v}_N|_{x_1=0}\,{\rm d}x'{\rm d}s={\cal R},\label{39el}
\end{equation}
where
\[
\begin{split}
{\cal R}=\int\limits_{\Omega_t}
\biggl(\,\sum_{l =0}^3\Big\{\partial_l\widehat{\cal A}_l\partial^{\alpha}_{\rm tan}\mathcal{U}  - 2[\partial^{\alpha}_{\rm tan} , & \widehat{\cal A}_l]\partial_l\mathcal{U} \Big\} \\ & - 2\partial^{\alpha}_{\rm tan}(\widehat{\cal A}_4\mathcal{U} )  +2\partial^{\alpha}_{\rm tan}\tilde{\mathfrak{f}}\biggr)\cdot\partial^{\alpha}_{\rm tan}\mathcal{U}\,{\rm d}x {\rm d}s ,
\end{split}
\]
$\partial_0:=\partial_t$, and we use the notation of commutator: $[a,b]c:=a(bc)-b(ac)$. Using the Moser-type calculus inequalities
\begin{align}
\| uv\|_{H^s(\Omega_T)}\leq & C\left( \|u\|_{H^s(\Omega_T)}\|v\|_{L_{\infty}(\Omega_T)}
+\|u\|_{L_{\infty}(\Omega_T)}\|v\|_{H^s(\Omega_T)}\right),\label{c40}\\
\| b(u)\|_{H^s(\Omega_T)}\leq & C(M)\|u\|_{H^s(\Omega_T)},\label{c41}
\end{align}
where the function $b$ is a $C^{\infty}$ function of $u$ with $b(0)=0$, and $M$ is such a positive constant that
$\|u\|_{L_{\infty}(\Omega_T)}\leq M$, we estimate the right-hand side in \eqref{39el}:
\begin{equation}
\begin{split}
\mathcal{ R}\leq C(K)\Bigl\{ & \|\mathcal{U}\|^2_{H^s(\Omega_t)}+\|\mathfrak{f}\|^2_{H^s(\Omega_T)}\\ & +\left(\|{U}\|^2_{W^1_{\infty}(\Omega_T)}+\|\mathfrak{f}\|^2_{L_{\infty}(\Omega_T)}\right)\left( 1+\|{\rm coeff}\|^2_{s+2}\right)\Bigr\},
\label{c42}
\end{split}
\end{equation}
where $\|{\rm coeff}\|_{m}:=\|\widehat{U}\|_{H^{m}(\Omega_T)}+\|\hat{\varphi}\|_{H^{m}(\partial\Omega_T)}$. More precisely, here and below we also use the following refinement of \eqref{c40}
\begin{equation}
\begin{split}
\sum\limits_{|\mu |+|\nu |=s}\|\partial^{\mu }u\, & \partial^{\nu}v\|_{L^2(\Omega_T)} \\ &\leq C\left( \|u\|_{H^s(\Omega_T)}\|v\|_{L_{\infty}(\Omega_T)}
+\|u\|_{L_{\infty}(\Omega_T)}\|v\|_{H^s(\Omega_T)}\right)
\end{split}
\label{c40'}
\end{equation}
which implies
\[
\begin{split}
\bigl\|[\partial^{\mu },u]v\bigr\|_{L^2(\Omega_T)} &\leq
C\sum\limits_{|\gamma |+|\nu |=s,\ |\gamma|\neq 0}\|\partial^{\gamma }u\,\partial^{\nu}v\|_{L^2(\Omega_T)} \\ & \leq C\left( \|u\|_{H^s(\Omega_T)}\|v\|_{L_{\infty}(\Omega_T)}
+\|u\|_{W^1_{\infty}(\Omega_T)}\|v\|_{H^{s-1}(\Omega_T)}\right)
\end{split}
\]
for $|\mu |\leq s$
($\partial^{\mu}=\partial_t^{\mu_0}\partial_1^{\mu_1}\partial_2^{\mu_2}\partial_3^{\mu_3}$, etc.). Note also that before the usage of \eqref{c41} we decompose the matrix functions $\widehat{\mathcal{A}}_l$ (with $l =\overline{0,3}$) as $\widehat{\mathcal{A}}_l(u)=\widehat{\mathcal{B}}_l(u)+\widehat{\mathcal{C}}_l$, where $\widehat{\mathcal{B}}_l(0)=0$ and  the matrix $\widehat{\mathcal{C}}_l$ is a constant matrix.

Using the boundary conditions \eqref{b50b.4}, we obtain
\begin{equation}
\begin{split}
-2   (&\partial^{\alpha}_{\rm tan}{p}\partial^{\alpha}_{\rm tan}{v}_N)|_{x_1=0}  \\  &
\begin{split}
=&\; 2\bigl\{\partial_1\hat{p}\,\partial^{\alpha}_{\rm tan}\varphi +[\partial^{\alpha}_{\rm tan},\partial_1\hat{p}]\varphi \bigr\}\big\{D_0(\hat{v})\partial^{\alpha}_{\rm tan}\varphi - \partial_1\hat{v}_N\partial^{\alpha}_{\rm tan}\varphi \\ & +[\partial^{\alpha}_{\rm tan},\hat{v}_2]\partial_2\varphi + [\partial^{\alpha}_{\rm tan},\hat{v}_3]\partial_3\varphi -[\partial^{\alpha}_{\rm tan},\partial_1\hat{v}_N]\varphi \big\}\big|_{x_1=0}
\end{split}
\\
&
\begin{split}
= &\; \Bigl(\partial_t\Bigl\{\underbrace{\partial_1\hat{p}\,(\partial^{\alpha}_{\rm tan}\varphi)^2}+2\partial^{\alpha}_{\rm tan}\varphi[\partial^{\alpha}_{\rm tan},\partial_1\hat{p}]\varphi\Bigr\} \\ & \quad +\underline{\partial_2\left\{\hat{v}_2\partial_1\hat{p}\,(\partial^{\alpha}_{\rm tan}\varphi)^2+2\hat{v}_2\partial^{\alpha}_{\rm tan}\varphi[\partial^{\alpha}_{\rm tan},\partial_1\hat{p}]\varphi\right\}}
 \\
 & \quad+\underline{\partial_3\left\{\hat{v}_3\partial_1\hat{p}\,(\partial^{\alpha}_{\rm tan}\varphi)^2+2\hat{v}_3\partial^{\alpha}_{\rm tan}\varphi[\partial^{\alpha}_{\rm tan},\partial_1\hat{p}]\varphi\right\}}\,\Bigr)\Big|_{x_1=0} -\mathfrak{R},
\end{split}
\end{split}
\label{b.form}
\end{equation}
where the underbraced term is the {\it  most important} one because under the Rayleigh-Taylor sign condition \eqref{42} it gives us the control on the $L^2$ norm of $\partial^{\alpha}_{\rm tan} \varphi$ (see below); the underlined terms  disappear after the integration over the domain $\partial\Omega_t$; $\mathfrak{R}$ is a sum of terms (lower-order terms, in some sense) which have the form ${\rm coeff}\partial_{\rm tan}^{\beta}\varphi\partial_{\rm tan}^{\gamma}\varphi$ with $|\beta|\leq s$, $|\gamma|\leq s$ and coeff being a coefficient depending on the derivatives of $\widehat{U}_{|x_1=0}$, $\partial_1\widehat{U}_{|x_1=0}$ and $\hat{\varphi}$ of order less than or equal to $s+1$. The explicit form of $\mathfrak{R}$ is of no importance. Using the Moser-type inequalities, we just estimate the integral of a typical term contained in $\mathfrak{R}$:
\[
\begin{split}
-2\int\limits_{\partial\Omega_t}&\partial^{\alpha}_{\rm tan}\varphi\,\partial_2\{ \hat{v}_2[\partial^{\alpha}_{\rm tan},\partial_1\hat{p}]\varphi\}|_{x_1=0}\,{\rm d}x'{\rm d}s\\
 & \leq  \|\varphi\|^2_{H^s(\partial\Omega_t)} +\|\partial_2\left\{ \hat{v}_2[\partial^{\alpha}_{\rm tan},\partial_1\hat{p}]\varphi\right\}|_{x_1=0}\|^2_{L^2(\partial\Omega_t)}\\
& \leq C(K)\left\{ \|\varphi\|^2_{H^s(\partial\Omega_t)}+\|\varphi\|^2_{L_{\infty}(\partial\Omega_T)}\big( 1
+\|\partial_1\widehat{U}_{|x_1=0}\|^2_{H^{s+1}(\partial\Omega_T)}\big)\right\}\\
& \leq
C(K)\left\{ \|\varphi\|^2_{H^s(\partial\Omega_t)}+\|\varphi\|^2_{L_{\infty}(\partial\Omega_T)}
\big( 1+\|\widehat{U}\|^2_{H^{s+3}(\Omega_T)}\big)\right\}.
\end{split}
\]
Estimating analogously the rest terms contained in $\mathfrak{R}$, we get
\begin{equation}
\label{frakR}
\mathfrak{R}\leq C(K)\left\{ \|\varphi\|^2_{H^s(\partial\Omega_t)}+\|\varphi\|^2_{W^1_{\infty}(\partial\Omega_T)}
( 1+\|{\rm coeff}\|^2_{s+3})\right\}.
\end{equation}

Using the Young inequality, the Rayleigh-Taylor sign condition \eqref{42} and again the Moser-type inequalities, from \eqref{39el}, \eqref{c42}, \eqref{b.form} and \eqref{frakR} we deduce
\begin{equation}
\label{n61}
\begin{split}
\int\limits_{\Omega}\widehat{\cal A}_0  \partial^{\alpha}_{\rm tan}\mathcal{U}\cdot\partial^{\alpha}_{\rm tan}\mathcal{U} & \, {\rm d}x+
\frac{\epsilon}{2}\int\limits_{\partial\Omega }  (\partial^{\alpha}_{\rm tan}\varphi)^2 \,{\rm d}x' \\ & \leq
\frac{2}{\epsilon}\int\limits_{\partial\Omega }  \big([\partial^{\alpha}_{\rm tan},\partial_1\hat{p}]\varphi\big)^2_{|x_1=0} \,{\rm d}x'+C(K)\mathcal{M}(t),\\
& = \frac{4}{\epsilon}\int\limits_{\partial\Omega_t }  [\partial^{\alpha}_{\rm tan},\partial_1\hat{p}]\varphi\,\partial_t\big([\partial^{\alpha}_{\rm tan},\partial_1\hat{p}]\varphi\big)\big|_{x_1=0} \,{\rm d}x'{\rm d}s \\ & \qquad \quad+C(K)\mathcal{M}(t) \\
& \leq C(K)\mathcal{M}(t),
\end{split}
\end{equation}
where
\[
\mathcal{ M}(t) =  \mathcal{ N}(T)+\int\limits_0^t\mathcal{ I}(\tau )\,{d}\tau,\qquad
\mathcal{ I}(t)=\nt \mathcal{U}(t)\nt^2_{H^s(\Omega )}+
\nt \varphi (t)\nt^2_{H^s(\partial\Omega )},
\]
\[
\begin{split}
\mathcal{ N}(T)  = &\|\mathfrak{f}\|^2_{H^s(\Omega_T)} \\ &
+\left(\|{U}\|^2_{W^1_{\infty}(\Omega_T)}+\|\varphi\|^2_{W^1_{\infty}(\partial\Omega_T)}+\|\mathfrak{f}\|^2_{L_{\infty}(\Omega_T)}\right)\left( 1+\|{\rm coeff}\|^2_{s+3}\right),
\end{split}
\]
with
\[
\nt u(t)\nt^2_{H^m(D)}:=
\sum\limits_{j=0}^m\|\partial_t^ju(t)\|^2_{H^{m-j}(D)}\qquad (D=\Omega\quad\mbox{or}\quad D=\partial\Omega ).
\]
The constant $C=C(K)$ in the last line of \eqref{n61} depends also on the fixed constant $\epsilon$.
Since only the biggest loss of derivatives from the coefficients will play the role for obtaining the final tame estimate, we have roughened the inequalities in \eqref{n61} by choosing the biggest loss. It follows from  \eqref{n61} that
\begin{equation}
\nt \mathcal{U}(t)\nt^2_{{\rm tan},s}+\nt \varphi (t)\|^2_{H^s(\partial\Omega)}\leq C(K){\cal M}(t),
\label{n63}
\end{equation}
where
\[
\nt u(t)\nt^2_{{\rm tan},m}:=\sum_{|\alpha|\leq m} \| \partial^{\alpha}_{\rm tan} u (t)\|^2_{L^2(\Omega )}.
\]

For ``closing'' the estimate in \eqref{n63} it remains to estimate  the derivatives of $\mathcal{U}$ containing normal ($x_1$-)derivatives.
We first estimate such derivatives of the ``noncharacteristic'' unknown $U_n=(p,v_N)$.
Applying to \eqref{n64} the operator $\partial_{\rm tan}^{\beta}$ with $|\beta |\leq s-1$,
using standard decompositions like
\[
\partial_{\rm tan}^{\beta}(B\partial_i \mathcal{U})=B\partial_{\rm tan}^{\beta}\partial_i \mathcal{U} +
[\partial_{\rm tan}^{\beta},B]\partial_i \mathcal{U},
\]
taking into account the fact that $\widehat{\mathfrak{A}}_1|_{x_1=0}=0$,  and employing counterparts of the calculus inequalities \eqref{c40}, \eqref{c41} and \eqref{c40'} for the ``layerwise'' norms $\nt (\cdot )(t)\nt$ (see \cite{Sch}) as well as the elementary inequality \eqref{elin}, we estimate the normal derivative of $\partial_{\rm tan}^{\beta}U_n$:
\begin{equation}
\begin{split}
\|\partial_1\partial_{\rm tan}^{\beta}U_n (t) & \|^2_{L^2(\Omega )} \\ \leq C(K)\Bigl\{ & \nt \mathcal{U}(t)\nt^2_{{\rm tan},s}
+\|\sigma \partial_1\partial_{\rm tan}^{\beta} \mathcal{U}(t)\|^2_{L^2(\Omega )} \\ &+ \nt \mathcal{U}(t) \nt^2_{H^{s-1}(\Omega )}
+ \nt \mathfrak{f} (t)\nt^2_{H^{s-1}(\Omega )} \\ &+
\left(\|{U}\|^2_{W^1_{\infty}(\Omega_T)}+\|\mathfrak{f}\|^2_{L_{\infty}(\Omega_T)}\right)\left( 1+\nt{\rm coeff} (t) \nt^2_{s}\right)\Bigr\}
\\ \leq C(K)\Bigl\{ & \nt \mathcal{U}(t)\nt^2_{{\rm tan},s}
+\|\sigma \partial_1\partial_{\rm tan}^{\beta} \mathcal{U}(t)\|^2_{L^2(\Omega )} +\mathcal{M}(t)\Bigr\} .
\end{split}
\label{n66}
\end{equation}

Since $\sigma |_{x_1=0}=0$, we do not need to use boundary conditions to estimate $\sigma\partial_1^j\partial_{\rm tan}^{\gamma}\mathcal{U}$, with $j+|\gamma |\leq s$,  and we easily get the inequality
\begin{equation}
\| \sigma\partial_1^j\partial_{\rm tan}^{\gamma}\mathcal{U}(t)\|^2_{L^2(\Omega )}\leq C(K)\mathcal{M}(t).
\label{n67}
\end{equation}
It follows from \eqref{n66} and \eqref{n67} for $j=1$ that
\[
\|\partial_1\partial_{\rm tan}^{\beta}U_n (t)  \|^2_{L^2(\Omega)}  \leq C(K)\left\{  \nt \mathcal{U}(t)\nt^2_{{\rm tan},s} +\mathcal{M}(t)\right\} .
\]
The last inequality implies
\begin{equation}
\sum_{i=1}^{k}\sum_{|\beta|\leq s-i}\|\partial_1^i\partial_{\rm tan}^{\beta}U_n (t)\|^2_{L^2(\Omega )}\leq C(K)\left\{  \nt \mathcal{U}(t)\nt^2_{{\rm tan},s} +\mathcal{M}(t)\right\}
\label{n68}
\end{equation}
with $k=1$. Estimate \eqref{n68} for $k=s$ is easily proved by finite induction. The combination of \eqref{n63} and \eqref{n68} for $k=s$ yields
\begin{equation}
\nt \mathcal{U}(t)\nt^2_{{\rm tan},s}+\|U_n (t)\|^2_{H^s(\Omega )} +\nt \varphi (t)\|^2_{H^s(\partial\Omega )}\leq C(K){\cal M}(t).
\label{n69}
\end{equation}

From equations \eqref{68}--\eqref{70} we estimate missing normal derivatives of the ``characteristic'' part $(v_2,v_3,F_1,F_2,F_3)$ of the unknown $U$:
\begin{equation}\label{omega'}
\|\omega (t)\|^2_{H^{s-1}(\Omega )}+ \sum_{j=1}^{3}\left\{\|\xi_j (t)\|^2_{H^{s-1}(\Omega )}+\|\eta_j (t)\|^2_{H^{s-1}(\Omega )}\right\}\leq C(K){\cal M}(t).
\end{equation}
The combination of \eqref{omega'} with \eqref{n69} implies the ``closed'' energy inequality
\[
\mathcal{ I}(t)\leq C(K)\mathcal{M}(t)=C(K)\biggl\{\mathcal{ N}(T)+\int\limits_0^t\mathcal{ I}(\tau )\,{d}\tau \biggr\}.
\]
Applying Gronwall's lemma, we obtain the energy a priori estimate
\begin{equation}
\mathcal{ I}(t)\leq C(K)\,e^{C(K)T}\mathcal{ N}(T).
\label{enaprest}
\end{equation}

Integrating \eqref{enaprest} over the interval $[0,T]$, we come to the estimate
\begin{equation}
\|\mathcal{U}\|^2_{H^s(\Omega_T )}+\|\varphi\|^2_{H^{s}(\partial\Omega_T)}\leq C(K)Te^{C(K)T}\mathcal{ N}(T).
\label{c51}
\end{equation}
Recall that ${U}=\widehat{J}\mathcal{U}$ (see \eqref{J}). Taking into account the decompositions $\widehat{J}(\hat{\varphi})= I_{13} +\widehat{J}_0(\hat{\varphi})$ and $\widehat{J}_0(0)=0$, using \eqref{c40} and \eqref{c41},
and the inequality
\[
\|u\|^2_{H^m([0,T]\times D)}\leq  T\|u\|^2_{H^{m+1}([0,T]\times D)}
\]
following from the integration of \eqref{elin} over the time interval $[0,T]$,
we obtain
\begin{equation}
\begin{split}
\|{U}\|^2_{H^s(\Omega_T )} & = \|\mathcal{U} + \widehat{J}_0\mathcal{U}\|^2_{H^s(\Omega_T )} \\
 & \leq C(K)\bigl(
\|\mathcal{U}\|^2_{H^s(\Omega_T )} +\|{U}\|^2_{L_{\infty}(\Omega_T)}\|{\rm coeff}\|_{s+1}^2\bigr) \\ &
\leq C(K)\|\mathcal{U}\|^2_{H^s(\Omega_T )}+
TC(K)\|{U}\|^2_{L_{\infty}(\Omega_T)}\|{\rm coeff}\|_{s+2}^2.
\end{split}
\label{c52}
\end{equation}
Inequalities \eqref{c51} and \eqref{c52} imply
\begin{equation}
\|{U}\|^2_{H^s(\Omega_T )}+\|\varphi\|^2_{H^{s}(\partial\Omega_T)}\leq C(K)Te^{C(K)T}\mathcal{ N}(T).
\label{c53}
\end{equation}

Applying Sobolev's embeddings, from (\ref{c53}) with $s\geq 3$, we get
\begin{equation}
\begin{split}
\|{U} & \|_{H^s(\Omega_T )}+\|\varphi\|_{H^{s}(\partial\Omega_T)} \\
&
\leq C(K)T^{1/2}e^{C(K)T}\Bigl\{ \|\mathfrak{f}\|_{H^s(\Omega_T)}
+\big( \|{U}\|_{H^3(\Omega_T)}+\|\varphi\|_{H^3(\partial\Omega_T)} \\ &\qquad\qquad\qquad\qquad\quad  +\|\mathfrak{f}\|_{H^3(\Omega_T)}\big)
\bigl(
\|\widehat{U}\|_{H^{s+3}(\Omega_T )}+\|\hat{\varphi}\|_{H^{s+3}(\partial\Omega_T)}\bigr)\Bigr\},
\end{split}
\label{c54}
\end{equation}
where we have absorbed some norms $\|{U}\|_{H^3(\Omega_T)}$ and $\| \varphi\|_{H^3(\partial\Omega_T)}$ in the left-hand side by choosing $T$ small enough. Considering \eqref{c54} for $s=3$ and using (\ref{37}), we obtain for $T$ small enough that
\begin{equation}
\|{U}\|_{H^3(\Omega_T )}+\|\varphi\|_{H^{3}(\partial\Omega_T)}\leq C(K_0)\|\mathfrak{f}\|_{H^3(\Omega_T)}.
\label{c55}
\end{equation}
It is natural to assume that $T<1$ and, hence, we can suppose that the constant $C(K_0)$ does not depend on $T$.
Inequalities \eqref{c54} and \eqref{c55} imply the tame estimate \eqref{38'}.

Using the Moser-type calculus inequalities, inequality \eqref{tildU'} and Sobolev's embeddings, from \eqref{a87''} we get the estimate
\begin{equation}
\begin{split}
\|\mathfrak{f}  \|_{H^s(\Omega_T)} \leq C(K_0)\big\{ &\|f\|_{H^s(\Omega_T)} +\|g\|_{H^{s+1}(\partial\Omega_T)} \\ & +
\| g\|_{H^{4}(\partial\Omega_T)} \bigl(
\|\widehat{U}\|_{H^{s+1}(\Omega_T )}+\|\hat{\varphi}\|_{H^{s+1}(\partial\Omega_T)}\bigr)\big\},
\end{split}
\label{fHs}
\end{equation}
for $\mathfrak{f}$ which together with \eqref{38'} and \eqref{a87'} (recall that the indices $^{\natural}$ were dropped) gives the desired tame a priori  estimate \eqref{38}.

Formally, the existence of solutions $(\dot{U},\varphi )\in H^1(\Omega_T)\times H^1(\partial\Omega_T)$ to problem \eqref{34}--\eqref{36} was proved in Section \ref{s4} (see Theorem \ref{t1}). However, we can here omit a formal proof of the existence of solutions having an arbitrary degree of smoothness and suppose that the existence result of Theorem \ref{t1} is also valid for the function spaces $H^s(\Omega_T)\times H^s(\partial\Omega_T)$ with $s\geq 1$ because the arguments in Section \ref{s4} towards the proof of existence are easily extended to these function spaces under the same assumptions about the regularity of the basic state $(\widehat{U},\hat{\varphi})$ as in Theorem \ref{t4.1}. The proof of Theorem \ref{t4.1} is thus complete.

\subsection{Proof of the tame estimate under the fulfilment of the non-collinearity condition}

In view of the second boundary condition in \eqref{b50b.4}, it follows from \eqref{39el} that
\begin{equation}
\begin{split}
\int\limits_{\Omega}\widehat{\cal A}_0 & \partial^{\alpha}_{\rm tan}\mathcal{U}\cdot\partial^{\alpha}_{\rm tan}\mathcal{U} \, {\rm d}x \\ & =- 2\int\limits_{\partial\Omega_t}\bigl\{\partial_1\hat{p}\,\partial^{\alpha}_{\rm tan}\varphi +[\partial^{\alpha}_{\rm tan},\partial_1\hat{p}]\varphi \bigr\}\,\partial^{\alpha}_{\rm tan}{v}_N|_{x_1=0}\,{\rm d}x'{\rm d}s+{\cal R},
\end{split}
\label{39el'}
\end{equation}
Using \eqref{74}, we reduce the term $-2\partial_1\hat{p}\,\partial^{\alpha}_{\rm tan}{\varphi}\,\partial^{\alpha}_{\rm tan}{v}_N|_{x_1=0}$ to the sum of ``lower-order'' terms having one of the following forms:
\begin{equation}
\begin{split}
\hat{c}\partial^{\gamma}_{\rm tan}F_{N}^j\partial^{\alpha}_{\rm tan}{v}_N,\quad \hat{c}\partial^{\gamma}_{\rm tan}R_j\partial^{\alpha}_{\rm tan}{v}_N,\quad
 \hat{c}\partial^{\gamma}_{\rm tan}v_N\partial^{\alpha}_{\rm tan}{v}_N, & \\
\hat{c}\partial^{\gamma}_{\rm tan}\varphi\,\partial^{\alpha}_{\rm tan}{v}_N,\quad
\partial^{\alpha}_{\rm tan}{v}_N[\partial^{\gamma}_{\rm tan},\hat{c}]F_{N}^j, &  \\
\partial^{\alpha}_{\rm tan}{v}_N[\partial^{\gamma}_{\rm tan},\hat{c}]R_j,\quad \partial^{\alpha}_{\rm tan}{v}_N[\partial^{\gamma}_{\rm tan},\hat{c}]v_N,\quad \partial^{\alpha}_{\rm tan}{v}_N[\partial^{\gamma}_{\rm tan},\hat{c}]\varphi&\qquad \mbox{on}\ \partial\Omega_T ,
\end{split}
\label{bterms'}
\end{equation}
where $\partial^{\alpha}_{\rm tan}=\partial_\ell\partial^{\gamma}_{\rm tan}$, $\ell=2$ or $\ell =3$ if $\alpha_0\neq s$ and $\ell=0$ (i.e., $\partial^{\alpha}_{\rm tan}=\partial_t\partial^{\gamma}_{\rm tan}$) otherwise, $|\gamma |=s-1\geq 2$, $j=\mu$ or $j=\nu$, and $\hat{c}$ is the common notation for a coefficient appearing in \eqref{74} as a component of $\pm\hat{a}_m$. The terms $\hat{c}\partial^{\gamma}_{\rm tan}F_{N}^j\partial^{\alpha}_{\rm tan}{v}_N|_{x_1=0}$ (with $j=\mu$ and $j=\nu$) are estimated by passing to the volume integral and integrating by parts:
\[
\begin{split}
\int\limits_{\partial\Omega_t} &\hat{c}\partial^{\gamma}_{\rm tan}F_{N}^j\partial^{\alpha}_{\rm tan}{v}_N|_{x^1=0}\,{\rm d}x'{\rm d}s=-\int\limits_{\Omega_t}\partial_1\bigl(\tilde{c}h_1\partial_\ell h_2\bigr){\rm d}x{\rm d}s \\
&=\int\limits_{\Omega_t}\Bigl\{\tilde{c}\partial_{\ell}h_1\partial_1h_2 +(\partial_{\ell}\tilde{c})h_1\partial_1h_2 -\tilde{c}\partial_1h_1\partial_{\ell}h_2-(\partial_1\tilde{c})h_1\partial_1h_2\Bigr\}{\rm d}x{\rm d}s \\
&\quad-\int\limits_{\Omega_t}\partial_{\ell}\bigl(\tilde{c}h_1\partial_1h_2\bigr){\rm d}x{\rm d}s\\
& \leq C(K)\|h\|^2_{H^1(\Omega_t )}-\int\limits_{\Omega_t}\partial_{\ell}\bigl(\tilde{c}h_1\partial_1h_2\bigr){\rm d}x{\rm d}s,
\end{split}
\]
where $\tilde{c}|_{x_1=0}=\hat{c}$, $h_1=\partial^{\gamma}_{\rm tan}F_{N}^j$, $h_2= \partial^{\gamma}_{\rm tan}v_N$ and $h=(h_1,h_2)$. If $\ell =2$ or $\ell =3$ the last integral above is equal to zero. But for $\ell =0$ we have:
\[
-\int\limits_{\Omega_t}\partial_s\bigl(\tilde{c}h_1\partial_1h_2\bigr){\rm d}x{\rm d}s=
-\int\limits_{\Omega}\tilde{c}h_1\partial_1h_2{\rm d}x{\rm d}s.
\]
Using the Young inequality, the elementary inequality \eqref{elin} and the calculus inequality \eqref{c40}, we estimate the last integral as follows:
\[
\begin{split}
-\int\limits_{\Omega}\tilde{c}h_1\partial_1h_2{\rm d}x{\rm d}s &\leq C(K)\left\{ \tilde{\varepsilon}\nt \mathcal{U}(t)\nt^2_{H^s(\Omega )}
+\frac{1}{\tilde{\varepsilon}}\|h_1 (t)\|^2_{L^2(\Omega )}\right\}\\ &\leq C(K)\left\{ \tilde{\varepsilon}\nt \mathcal{U}(t)\nt^2_{H^s(\Omega )}
+\frac{1}{\tilde{\varepsilon}}\|F_{N}^j\|^2_{H^s(\Omega_t )}\right\}\\
&\leq \tilde{\varepsilon}C(K)\nt \mathcal{U}(t)\nt^2_{H^s(\Omega )} \\ & \qquad +\frac{C(K)}{\tilde{\varepsilon}}\Big\{
\|\mathcal{U}\|_{H^{s}(\Omega_t)} +\|U\|^2_{L_{\infty}(\Omega_T)}\|\hat{\varphi}\|^2_{H^{s+1}(\partial\Omega_T)}\Big\},
\end{split}
\]
where $\tilde{\varepsilon} $ is a small positive constant.

The rest terms in \eqref{bterms'} as well as the term $-2 [\partial^{\alpha}_{\rm tan},\partial_1\hat{p}]\varphi \,\partial^{\alpha}_{\rm tan}{v}_N|_{x_1=0}$ appearing in the boundary integral in \eqref{39el'} are estimated in the same way as above. The functions $R_j$ and $\varphi$ appear in the volume integral as $\chi (x_1)R_j$ and $\Psi$ respectively, where $\chi (x_1)$ is the ``lifting'' function from \eqref{14}. We also use the following ``higher-order'' version  of estimate \eqref{60} for $R_j$:
\begin{equation}
\begin{split}
\|R_j\|_{H^s(\partial\Omega_t)}\leq & C(K)\Big\{  \|\mathfrak{f}\|_{H^{s+1}(\Omega_T)} +\|\varphi\|_{H^{s}(\partial\Omega_t)}\\ &
+\big(\|\mathfrak{f}\|^2_{L_{\infty}(\Omega_T)}+\|\varphi\|^2_{L_{\infty}(\partial\Omega_T)}\big)\left( 1+\nt{\rm coeff} \nt^2_{s+2}\right)\Big\}.
\end{split}
\label{60'}
\end{equation}
Omitting technical details, we finally obtain the following estimate for the boundary integral in \eqref{39el'}:
\[
\begin{split}
- 2\int\limits_{\partial\Omega_t}\bigl\{\partial_1\hat{p}\,\partial^{\alpha}_{\rm tan}\varphi  +[\partial^{\alpha}_{\rm tan},& \partial_1\hat{p}]\varphi \bigr\} \,\partial^{\alpha}_{\rm tan}{v}_N|_{x_1=0}\,{\rm d}x'{\rm d}s \\ &  \leq
C(K)\Big\{\tilde{\varepsilon}\nt \mathcal{U}(t)\nt^2_{H^s(\Omega )}+\frac{1}{\tilde{\varepsilon}}\widetilde{\mathcal{M}}(t)\Big\},
\end{split}
\]
where
\[
\begin{split}
\widetilde{\mathcal{ M}}(t) &=  \widetilde{\mathcal{ N}}(T)+\int\limits_0^t\mathcal{ I}(\tau )\,{d}\tau,\qquad
\mathcal{ I}(t)=\nt \mathcal{U}(t)\nt^2_{H^s(\Omega )}+
\nt \varphi (t)\nt^2_{H^s(\partial\Omega )},
\\
\widetilde{\mathcal{ N}}(T) & = \|\mathfrak{f}\|^2_{H^{s+1}(\Omega_T)} \\ & \qquad
+\left(\|{U}\|^2_{W^1_{\infty}(\Omega_T)}+\|\varphi\|^2_{W^1_{\infty}(\partial\Omega_T)}+\|\mathfrak{f}\|^2_{L_{\infty}(\Omega_T)}\right)\left( 1+\|{\rm coeff}\|^2_{s+2}\right).
\end{split}
\]
Using then \eqref{c42}, it follows from \eqref{39el'} that
\begin{equation}
\nt \mathcal{U}(t)\nt^2_{{\rm tan},s}\leq C(K)\Big\{\tilde{\varepsilon}\nt \mathcal{U}(t)\nt^2_{H^s(\Omega )}+\frac{1}{\tilde{\varepsilon}}\widetilde{\mathcal{M}}(t)\Big\}.
\label{n63'}
\end{equation}
In fact, estimates \eqref{n68} (with $k=s$) and \eqref{omega'} were roughened ones in the sense that we could replace the norms $\|{\rm coeff}\|_{s+3}$ appearing there in $\mathcal{M}(t)$ with the norms $\|{\rm coeff}\|_{s+2}$. The combination of \eqref{n63'} with the improved versions of estimates \eqref{n68} and \eqref{omega'} (where the norms $\|{\rm coeff}\|_{s+3}$ are replaced with norms $\|{\rm coeff}\|_{s+2}$) gives
\[
\nt \mathcal{U}(t)\nt^2_{H^s(\Omega )}\leq C(K)\Big\{\tilde{\varepsilon}\nt \mathcal{U}(t)\nt^2_{H^s(\Omega )}+\frac{1}{\tilde{\varepsilon}}\widetilde{\mathcal{M}}(t)\Big\}.
\]
We then absorb the term $\tilde{\varepsilon}C (K)\nt \mathcal{U}(t)\nt^2_{H^s(\Omega )}$ in the left-hand side of the last inequality by choosing $\tilde{\varepsilon}$ small enough:
\begin{equation}
\nt \mathcal{U}(t)\nt^2_{H^s(\Omega )}\leq C(K)\widetilde{\mathcal{M}}(t).
\label{80'}
\end{equation}

By utilizing the elementary inequality \eqref{elin} (with $u=\varphi$ and $D=\partial\Omega$), from \eqref{80'} we deduce
\begin{equation}
\nt \mathcal{U}(t)\nt^2_{H^s(\Omega )}+\nt \varphi (t)\nt^2_{H^{s-1}(\partial\Omega )}\leq C(K)\widetilde{\mathcal{M}}(t).
\label{81'}
\end{equation}
Applying the differential operator $\partial^{\gamma}_{\rm tan}$ with $|\gamma |=s-1$ to \eqref{74}, using the Moser-type inequalities, estimate \eqref{60'}, inequality \eqref{elin} and the trace theorem, we obtain the inequalities
\begin{equation}
\sum\limits_{|\alpha |=s}\|\partial_{\rm tan}^{\alpha}\varphi\|^2_{L^2(\partial\Omega_t)}\leq C(K)\bigg\{ \widetilde{\mathcal{ N}}(T)+\int\limits_0^t\widetilde{\mathcal{ I}}(\tau )\,{d}\tau\bigg\},
\label{82^}
\end{equation}
\begin{equation}
\sum\limits_{|\alpha |=s}\|\partial_{\rm tan}^{\alpha}\varphi (t)\|^2_{L^2(\partial\Omega )}\leq C(K)\left\{ \widetilde{\mathcal{ N}}(T)+\widetilde{\mathcal{ I}}(t )\right\},
\label{82"}
\end{equation}
with $\widetilde{\mathcal{ I}}(t)=\nt \mathcal{U}(t)\nt^2_{H^s(\Omega )}+ \nt \varphi (t)\nt^2_{H^{s-1}(\partial\Omega )}$.
Combination \eqref{81'} with \eqref{82^}  gives
\[
\widetilde{\mathcal{ I}}(t)\leq C(K)\bigg\{ \widetilde{\mathcal{ N}}(T)+\int\limits_0^t\widetilde{\mathcal{ I}}(\tau )\,{d}\tau\bigg\}.
\]
Applying then Gronwall's lemma, we get the energy a priori estimate
\begin{equation}
\widetilde{\mathcal{ I}}(t)\leq C(K)\,e^{C(K)T}\widetilde{\mathcal{ N}}(T).
\label{84^}
\end{equation}

Integrating \eqref{84^} over the interval $[0,T]$, we come to the estimate
\begin{equation}
\widetilde{\mathcal{ I}}(t)\leq C(K)Te^{C(K)T}\widetilde{\mathcal{ N}}(T).
\label{85^}
\end{equation}
From \eqref{82"} and \eqref{85^} we derive the inequality
\[
\sum\limits_{|\alpha |=s}\|\partial_{\rm tan}^{\alpha}\varphi (t)\|^2_{L^2(\partial\Omega )}\leq C(K) \widetilde{\mathcal{ N}}(T)
\]
for $T<1$ whose integration over the interval $[0,T]$ gives
\[
\sum\limits_{|\alpha |=s}\|\partial_{\rm tan}^{\alpha}\varphi\|^2_{L^2(\partial\Omega_t)}\leq C(K)T\widetilde{\mathcal{ N}}(T).
\]
The last estimate and \eqref{85^} imply
\begin{equation}
\mathcal{ I}(t)\leq C(K)T\widetilde{\mathcal{ N}}(T)
\label{84'}
\end{equation}
for $T<1$.

Applying then arguments similar to that in \eqref{c52}--\eqref{c54}, we get
\begin{equation}
\begin{split}
\|{U} & \|_{H^s(\Omega_T )}+\|\varphi\|_{H^{s}(\partial\Omega_T)} \\
&
\begin{split}
\leq C(K)T^{1/2}\Bigl\{ \|\mathfrak{f}\|_{H^{s+1}(\Omega_T)}
+&\left( \|{U}\|_{H^3(\Omega_T)}+\|\varphi\|_{H^3(\partial\Omega_T)} +\|\mathfrak{f}\|_{H^4(\Omega_T)}\right) \\ &\,\cdot
\bigl(
\|\widehat{U}\|_{H^{s+2}(\Omega_T )}+\|\hat{\varphi}\|_{H^{s+2}(\partial\Omega_T)}\bigr)\Bigr\},
\end{split}
\end{split}
\label{c54'}
\end{equation}
where we have absorbed some norms $\|{U}\|_{H^3(\Omega_T)}$ and $\| \varphi\|_{H^3(\partial\Omega_T)}$ in the left-hand side by choosing $T$ small enough. Considering \eqref{c54'} for $s=3$ and using \eqref{37'}, we obtain for $T$ small enough that
\begin{equation}
\|{U}\|_{H^3(\Omega_T )}+\|\varphi\|_{H^{3}(\partial\Omega_T)}\leq C(K_0)\|\mathfrak{f}\|_{H^4(\Omega_T)}.
\label{c55'}
\end{equation}
Inequalities \eqref{c54'} and \eqref{c55'} imply the tame estimate \eqref{38''}. Using \eqref{a87'}, \eqref{38''}  and the estimate \eqref{fHs} for $s$ replaced with $s+1$ gives the desired tame a priori  estimate \eqref{38"}. With reference to the comments about the existence of solutions from the last paragraph of the previous subsection, the proof of Theorem \ref{t4.2} is complete.

\section{Compatibility conditions and approximate solution}
\label{s4^}

To use the tame estimates \eqref{38'} or \eqref{38''} for the proof of convergence of the Nash-Moser iteration, we should reduce our nonlinear problem \eqref{11.1}--\eqref{13.1} on $[0,T]\times\Omega$ to that on $\Omega_T$ (see \eqref{OmegaT}) whose solutions vanish in the past. This is achieved by the classical argument suggesting to absorb the initial data into the interior equations by constructing a so-called \textit{approximate solution}. Before constructing the approximate solution we have to define \textit{compatibility conditions} for the initial data \eqref{13.1}.

Suppose we are given initial data $(U_0,\varphi_0)$ that satisfy the hyperbolicity conditions \eqref{11} and the divergence and boundary constraints \eqref{20} and \eqref{21}. Let
\[
U^{ (0)}=(p^{ (0)},F_1^{ (0)},F_2^{ (0)},F_3^{ (0)}):=U_0^{}\quad\mbox{and}\quad \varphi^{(0)}:=\varphi_0,
\]
where $v^{ (i )}:=\bigl(v_1^{ (i )},v_2^{ (i )},v_3^{ (i )}\bigr)$ and $F_j^{ (i )}:=\bigl(F_j^{ (i )},F_j^{ (i )},F_j^{ (i )}\bigr)$ ($j=1,2,3$). Here $i =0$ but below these notations will be used with indices $i\geq 0$. Let also $\Psi^{ (i )}:=\chi ( x_1)\varphi^{(i )}(t,x')$. In view of the hyperbolicity conditions \eqref{11}, we rewrite system \eqref{11.1} in the form
\begin{equation}
\partial_t U = -\left(A_0(U)\right)^{-1}\left( \widetilde{A}_1(U , {\Psi})\partial_1U +
A_2(U )\partial_2U + A_3(U )\partial_3U \right).
\label{56^}
\end{equation}
The traces
\[
U^{ (j)}=(p^{ (j)},v^{ (j)},F_1^{ (j)},F_2^{ (j)},F_3^{ (j)})=\partial_t^jU|_{t=0}\quad \mbox{and}\quad \varphi^{(j)}=\partial_t^j\varphi|_{t=0},
\]
with $j\geq 1$, are recursively defined by the formal application of the differential operator $\partial_t^{j-1}$
to the boundary condition
\begin{equation}
\partial_t\varphi =\left.\left(v_1-v_2\partial_2\varphi -v_3\partial_3\varphi \right)\right|_{x_1=0}
\label{57^}
\end{equation}
and \eqref{56^} and evaluating $\partial_t^j\varphi$ and $\partial_t^jU$ at $t=0$.

We naturally define the zeroth-order compatibility condition as $p^{(0)}_{|x_1=0}=0$. Evaluating \eqref{57^} at $t=0$, we get
\begin{equation}
\varphi^{(1)} =\bigl(v_1^{(0)}-v_2^{(0)}\partial_2\varphi^{(0)} -v_3^{(0)}\partial_3\varphi^{(0)} \bigr)\bigr|_{x_1=0},
\label{58^}
\end{equation}
and then, with $\partial_t\Psi^{\pm}|_{t=0}=\chi(\pm x_1)\varphi^{(1)}(x')$, from \eqref{56^} evaluated at $t=0$ we define $U^{\pm(1)}$. The first-order compatibility condition $p^{(1)}_{|x_1=0}=0$ depends on $\varphi^{(0)}$ and $\varphi^{(1)}$. Knowing $\varphi^{(1)}$ and $U^{\pm(1)}$ we can then find $\varphi^{(2)}$, $U^{\pm(2)}$, etc.
The following lemma is the analogue of lemma 4.2.1 in \cite{Met}, lemma 19 in \cite{ST} and lemma 4.1 in \cite{Tcpam}.

\begin{lemma}
Let $\mu\in\mathbb{N}$, $\mu \geq 3$, and $(U_0,\varphi_0)\in H^{\mu +1/2}(\Omega)\times H^{\mu +1/2}(\partial\Omega )$. Then, the procedure described above determines $U^{(j)}\in H^{\mu +1/2 -j}(\Omega )$ and $\varphi^{(j)}\in H^{\mu +1/2-j}(\partial\Omega)$ for $j= 1,\ldots ,\mu$. Moreover,
\begin{equation}
\sum_{j=1}^{\mu}\left( \bigl\|U^{(j)}\bigr\|_{H^{\mu +1/2-j}(\Omega )}+\bigl\|\varphi^{(j)}\bigr\|_{H^{\mu +1/2-j}(\partial\Omega)} \right)
\leq CM_0,
\label{59^}
\end{equation}
where
\begin{equation}
M_0=\|U_0\|_{H^{\mu +1/2}(\Omega)}+\|U_0\|_{H^{\mu +1/2}(\Omega)}+\|\varphi_0\|_{H^{\mu +1/2}(\partial\Omega)},
\label{60^}
\end{equation}
the constant $C>0$ depends only on $\mu$, $\|U_0\|_{W^{1}_{\infty}(\Omega)}$, and
$\|\varphi_0\|_{W^1_{\infty}(\partial\Omega)}$.
\label{l4.1}
\end{lemma}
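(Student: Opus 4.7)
The plan is to argue by induction on $j$, using the recursive structure built into \eqref{56^}--\eqref{58^}. The base case $j=0$ is immediate from the hypothesis on $(U_0,\varphi_0)$. Assuming that $U^{(i)}\in H^{\mu+1/2-i}(\Omega)$ and $\varphi^{(i)}\in H^{\mu+1/2-i}(\partial\Omega)$ have been produced for all $i\leq j-1$, together with the partial version of \eqref{59^} up to index $j-1$, I construct $\varphi^{(j)}$ from the boundary relation \eqref{57^} and then $U^{(j)}$ from the interior equation \eqref{56^}.

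First, applying $\partial_t^{j-1}$ to \eqref{57^} and evaluating at $t=0$ expresses $\varphi^{(j)}$, by the Leibniz rule, as a finite sum of products of boundary traces of $v^{(i)}_{|x_1=0}$ for $i\leq j-1$ and of tangential derivatives $\partial_{x'}\varphi^{(i)}$ for $i\leq j-1$. The trace theorem gives $v^{(i)}_{|x_1=0}\in H^{\mu-i}(\partial\Omega)\hookrightarrow H^{\mu+1/2-j}(\partial\Omega)$, while the Moser-type calculus inequalities \eqref{c40}--\eqref{c41}, applied on $\partial\Omega$, bound every such product in $H^{\mu+1/2-j}(\partial\Omega)$ by $CM_0$; the dependence on $\|\varphi_0\|_{W^1_\infty(\partial\Omega)}$ comes from the $L^\infty$ factor in \eqref{c40}.

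Next, thanks to the hyperbolicity assumption \eqref{11} the matrix $A_0(U_0)$ is invertible, so $A_0(U)^{-1}$ is a smooth nonlinear function of $U$ in a neighborhood of $U_0$, while $\widetilde{A}_k(U,\Psi)$ depends smoothly on $U$ and on the first derivatives of $\Psi$. Applying $\partial_t^{j-1}$ to \eqref{56^} and evaluating at $t=0$ then writes $U^{(j)}$ as a finite sum of products whose factors have one of the forms $b(U^{(0)})$, $\partial^{\alpha}U^{(i)}$ or $\partial^{\alpha}\partial_t^k(\chi(x_1)\varphi^{(k)})|_{t=0}$, with each factor carrying at most one space derivative and the total time-derivative order in each summand being at most $j-1$. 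The composition inequality \eqref{c41} handles $b(U^{(0)})$, the product inequality \eqref{c40} handles the remaining products, and the trivial bound $\|\chi\varphi^{(k)}\|_{H^{s}(\Omega)}\leq C\|\varphi^{(k)}\|_{H^{s}(\partial\Omega)}$ transfers boundary regularity to interior regularity. Summing yields $U^{(j)}\in H^{\mu+1/2-j}(\Omega)$ together with the estimate asserted in \eqref{59^}, and the induction closes.

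The main technical obstacle is the combinatorial bookkeeping of the terms produced by $\partial_t^{j-1}$ acting on the nonlinear composition $U\mapsto A_0(U)^{-1}\widetilde{A}_1(U,\Psi)U$: the Fa\`a di Bruno formula generates a sum indexed by multi-indices whose orders must be tracked carefully so that the total spatial-derivative count in each term does not exceed the Sobolev budget $\mu+1/2-j$. The saving grace is that every application of $\partial_t$ to \eqref{56^} trades one time derivative for exactly one spatial derivative, so $k$ iterations of $\partial_t$ acting on $U$ produce terms of total derivative order $k$; for $j\leq\mu$ this matches the budget, and since in every such product the highest-order factor lives in $H^{\mu+1/2-j}$ while the remaining factors are controlled in $W^1_\infty$, the standard form of the Moser inequalities applies. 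The argument is parallel to those in \cite{Met,ST,Tcpam}.
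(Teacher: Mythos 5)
Your overall plan — induction on $j$, Leibniz expansion of $\partial_t^{j-1}$ applied to \eqref{57^} and \eqref{56^}, trace theorem, lifting by $\chi$, and Moser-type product/composition estimates — is exactly the route the paper has in mind (its own proof is a one-line appeal to ``multiplicative properties of Sobolev spaces''). However, there is a genuine soft spot at the only place where the lemma has real content, namely the claimed form of the bound: linear in $M_0$ with a constant depending \emph{only} on $\mu$, $\|U_0\|_{W^1_\infty(\Omega)}$ and $\|\varphi_0\|_{W^1_\infty(\partial\Omega)}$. Your key assertion that ``in every such product the highest-order factor lives in $H^{\mu+1/2-j}$ while the remaining factors are controlled in $W^1_\infty$'' is false as stated once $j\geq 2$: the remaining factors are in general intermediate traces $U^{(i)},\varphi^{(i)}$ with $i\geq 1$ (or, after full expansion, second and higher derivatives of the data), and their $L^\infty$ or $W^1_\infty$ norms are \emph{not} controlled by $\|U_0\|_{W^1_\infty}$, $\|\varphi_0\|_{W^1_\infty}$ — already $\varphi^{(1)}$ contains $v^{(0)}\partial_{x'}\varphi_0$, so $\|\varphi^{(1)}\|_{W^1_\infty}$ involves second derivatives of the data. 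If you instead bound those $L^\infty$ factors by Sobolev embedding from the inductive $H^{\mu+1/2-i}$ bounds, the resulting estimate becomes polynomial in $M_0$ (or the constant acquires a dependence on $M_0$), which is strictly weaker than \eqref{59^} as stated.

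The fix, and what the paper's reference to multiplicative properties really means, is to expand $U^{(j)}$ and $\varphi^{(j)}$ \emph{all the way down to the data}: each term is a smooth matrix function of $(U_0,\nabla\Psi_0)$ times a product $\prod_i\partial^{\beta_i}(\text{data})$ with $\sum_i|\beta_i|= j$, and one then applies the multi-factor version of the tame product estimate \eqref{c40'} (the Gagliardo--Nirenberg based inequality $\|\prod_i\partial^{\beta_i}u_i\|_{H^{\mu+1/2-j}}\leq C\sum_i\bigl(\prod_{k\neq i}\|u_k\|_{L^\infty}\bigr)\|u_i\|_{H^{\mu+1/2}}$), in which every factor except one contributes only the $L^\infty$ norm of the \emph{undifferentiated} data, the derivatives being absorbed by interpolation; compositions are handled by \eqref{c41}. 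This is how one obtains a bound linear in $M_0$ with a constant depending only on the $W^1_\infty$ norms of $(U_0,\varphi_0)$, as in the cited analogues (Métivier's lemma 4.2.1 and lemma 4.1 of \cite{Tcpam}). Your bookkeeping of the derivative budget (one spatial derivative per time derivative, total order $j$) is correct and is precisely what makes this multi-factor estimate applicable; but the recursive two-factor application of \eqref{c40} with intermediate traces treated as $W^1_\infty$-controlled coefficients, as written, does not deliver the stated constant dependence.
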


The proof is almost evident and based on the multiplicative properties of Sobolev spaces.

\begin{definition}
Let $\mu\in\mathbb{N}$, $\mu \geq 3$. The initial data $(U_0,\varphi_0)\in H^{\mu +1/2}(\Omega)\times H^{\mu +1/2}(\partial\Omega )$ are said to be compatible up to order $\mu$ when $\big(U^{(j)}, \varphi^{(j)}\big)\in H^{\mu +1/2-j}(\Omega)\times H^{\mu +1/2-j}(\partial\Omega )$ satisfy
\begin{equation}
p^{(j)}_{|x_1=0}=0
\label{61^}
\end{equation}
for $j=0,\ldots ,\mu$.
\label{d1}
\end{definition}

We are ready to construct the approximate solution.

\begin{lemma}
Suppose the initial data \eqref{13.1} are compatible up to order $\mu$ and satisfy the hyperbolicity conditions \eqref{11} and the divergence constraints \eqref{20} for all $x\in\Omega$ and the boundary constraints \eqref{21} for all $x\in \partial\Omega$. Then there exists a vector-function $(U^{a},\varphi^a)\in H^{\mu +1}(\Omega_T)\times H^{\mu +1}(\partial\Omega_T)$
that is further called the approximate solution to problem \eqref{11.1}--\eqref{13.1} such that
\begin{equation}
\partial_t^j\mathbb{L}(U^{a},\Psi^{a} )|_{t=0}=0 \quad\mbox{in}\ \Omega\ \mbox{for}\ j=0,\ldots , \mu -1,
\label{62^}
\end{equation}
and it satisfies the boundary conditions \eqref{12.1} and the initial data \eqref{13.1}:
\begin{equation}
\mathbb{B}(U^{a},\varphi^a )=0\quad\mbox{on}\ \partial\Omega_T,\label{12.1"}
\end{equation}
\begin{equation}
U^{a}|_{t=0}=U_0\quad\mbox{in}\ \Omega, \qquad \varphi^a |_{t=0}=\varphi_0\quad \mbox{on}\ \partial\Omega,\label{13.1"}
\end{equation}
where ${\Psi}^{a} =\chi ( x_1)\varphi^a$. The approximate solution  obeys the estimate
\begin{equation}
\|U^{a}\|_{H^{\mu +1}(\Omega_T)}+\|\varphi^a\|_{H^{\mu +1}(\partial\Omega_T)}\leq C_1(M_0)
\label{63^}
\end{equation}
where $C_1=C_1(M_0)>0$ is a constant depending on $M_0$ (see \eqref{60^}). Moreover, the approximate solution satisfies the divergence and boundary constraints \eqref{20}  and \eqref{21} as well as equations \eqref{31}. It also satisfies the hyperbolicity conditions \eqref{11} in $[0,T]\times\Omega$. If the initial data satisfy the Rayleigh-Taylor sign condition \eqref{RT}, then the approximate solution satisfies \eqref{RT} on $[0,T]\times\partial\Omega$. If the initial data satisfy the non-collinearity condition \eqref{NC}, then the approximate solution satisfies \eqref{NC} on $[0,T]\times\partial\Omega$.
\label{l2}
\end{lemma}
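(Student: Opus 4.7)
The plan is to construct $(U^{a},\varphi^{a})$ by lifting in time the initial traces $(U^{(j)},\varphi^{(j)})$ provided by Lemma~\ref{l4.1} and then performing a sequence of small modifications so that the boundary conditions \eqref{12.1"} and the constraints \eqref{20}, \eqref{21}, \eqref{31} hold on the whole slab $[0,T]\times\Omega$, not only at $t=0$.

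First, using a continuous right inverse of the trace operator $u\mapsto(\partial_t^j u|_{t=0})_{j=0}^{\mu}$ from $H^{\mu+1}(\Omega_T)$ onto $\prod_{j=0}^{\mu}H^{\mu+1/2-j}(\Omega)$ (for instance Seeley's extension composed with a temporal cut-off supported in $[0,T]$), I lift the traces $(U^{(j)})$ to a candidate $\widetilde{U}^{a}\in H^{\mu+1}(\Omega_T)$ with $\partial_t^j\widetilde{U}^{a}|_{t=0}=U^{(j)}$ and $\|\widetilde{U}^{a}\|_{H^{\mu+1}(\Omega_T)}\le CM_0$. To enforce the second boundary condition in \eqref{12.1"} I replace $\widetilde{p}^{a}$ by $p^{a}:=\widetilde{p}^{a}-\chi(x_1)\,\widetilde{p}^{a}|_{x_1=0}$; the higher-order compatibility conditions \eqref{61^} guarantee that the subtracted term vanishes to order $\mu$ at $t=0$, so no initial trace is disturbed, while $\chi(0)=1$ yields $p^{a}|_{x_1=0}=0$. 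I then \emph{define} $\varphi^{a}$ on $\partial\Omega_T$ as the unique solution of the nonlinear Cauchy problem $\partial_t\varphi^{a}=(v_1^{a}-v_2^{a}\partial_2\varphi^{a}-v_3^{a}\partial_3\varphi^{a})|_{x_1=0}$, $\varphi^{a}|_{t=0}=\varphi_0$; for a short time $T$ this ODE is well-posed in $H^{\mu+1}(\partial\Omega_T)$ by Moser-type estimates, and its time derivatives at $t=0$ automatically coincide with the $\varphi^{(j)}$, since the latter were defined by exactly the same recursion starting from \eqref{58^}. Both boundary conditions in \eqref{12.1"} thus hold on $\partial\Omega_T$.

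The assertion \eqref{62^} then follows from a direct trace computation: $\partial_t^j\mathbb{L}(U^{a},\Psi^{a})|_{t=0}$ is a fixed polynomial expression in $(\partial_t^i U^{a},\partial_t^i\varphi^{a})|_{t=0}$ for $i\le j+1$, which coincide with $(U^{(i)},\varphi^{(i)})$; by construction of those traces via \eqref{56^} each such expression vanishes for $j=0,\ldots,\mu-1$. The hyperbolicity condition \eqref{11} is open and holds at $t=0$, so it persists on $[0,T]\times\Omega$ for $T$ small; the same continuity argument propagates \eqref{RT} or \eqref{NC} from $\{t=0\}$ to $\partial\Omega_T$. The estimate \eqref{63^} is then obtained by combining \eqref{59^} with the continuity of the lifting, of the correction of $p^{a}$, and of the solution map for the ODE defining $\varphi^{a}$, all of which are tame in $M_0$.

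The main obstacle is the enforcement of the constraints \eqref{20}, \eqref{21} and of the auxiliary equations \eqref{31} on the whole $[0,T]\times\Omega$ rather than only at $t=0$. To handle this I make the construction of $F_j^{a}$ \emph{structural} near the boundary: on $\partial\Omega_T$ I replace the tangential components $F_{kj}^{a}|_{x_1=0}$ ($k=2,3$) by the unique solutions of the ODEs \eqref{31} with initial data read off from the $U^{(j)}$, which both enforces \eqref{31} and, together with $w_1^{a}|_{x_1=0}=0$ (already secured by the first boundary condition above), yields the boundary constraint \eqref{21} via the transport identity \eqref{22} exactly as in the proof of Proposition~\ref{p1}. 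The divergence constraint \eqref{20} is then obtained by the very same argument: the quantity $a_j^{a}={\rm div}(\rho^{a}\mathcal{F}_j^{a})/\partial_1\Phi^{a}$ satisfies a linear transport equation that needs no boundary condition (precisely because $w_1^{a}|_{x_1=0}=0$) and whose initial datum vanishes by hypothesis on $(U_0,\varphi_0)$, hence $a_j^{a}\equiv 0$. What makes the scheme consistent is that the successive modifications are performed in the order $p^{a}$, then $\varphi^{a}$, then $F_j^{a}$, so that each correction only affects quantities whose boundary values are already controlled at the previous stage, leaving the tame bound \eqref{63^} undisturbed.
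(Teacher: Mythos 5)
Your overall scaffolding (lift the time-derivative traces from Lemma \ref{l4.1}, use the compatibility conditions \eqref{61^} to correct the boundary values without disturbing the traces at $t=0$, deduce \eqref{62^} from the fact that the traces were generated by the recursion \eqref{56^}--\eqref{57^}, and propagate the open conditions \eqref{11}, \eqref{RT}, \eqref{NC} by continuity for small $T$) is the standard one; the paper itself omits the proof and refers to the analogous constructions in \cite{T09,Tcpam,ST,MTTcont}. However, there is a genuine gap in the way you claim the constraints \eqref{20}, \eqref{21} for $t>0$. The transport identity \eqref{22} and the equation for $a_j={\rm div}\,(\rho\mathcal{F}_j)/\partial_1\Phi$ used in Proposition \ref{p1} are \emph{consequences of the interior equations} \eqref{11.1}; Proposition \ref{p1} applies to exact solutions. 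The approximate solution only satisfies \eqref{62^}, i.e.\ the interior equations to order $\mu-1$ in time at $t=0$, not identically on $[0,T]\times\Omega$, so ``the very same argument'' is not available. Your corrections do not repair this: replacing only the boundary values $F^a_{kj}|_{x_1=0}$ ($k=2,3$) by solutions of \eqref{31} makes \eqref{31} hold, but it does not make the $F_{1j}$-equation hold on the boundary (so the transport equation for $F_N^{j,a}|_{x_1=0}$ that would give \eqref{21} is missing), and it cannot touch the interior condition \eqref{20} at all, since \eqref{20} involves normal derivatives in $\Omega$ while you have only modified data on $\partial\Omega_T$ (you also never specify the interior extension of this boundary modification). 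As written, the assertion $a_j^a\equiv 0$ is unjustified.

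The standard repair, which is what the cited constructions do in substance (cf.\ also Remark \ref{r_ms} and the analogous step in Proposition \ref{p3}), is to make the evolution of the deformation gradient \emph{exact}: after fixing $p^a$, $v^a$, $\varphi^a$ (hence $\rho^a$, $\Psi^a$), define $\rho^a F_j^a$ as the solution in $\Omega_T$ of the linear transport system \eqref{22} (equivalently, the last nine equations of \eqref{11.1} with coefficients built from $v^a,\Psi^a$) with initial datum $\rho_0 F_{j,0}$. Then the argument of Proposition \ref{p1} applies verbatim and yields \eqref{20} and \eqref{21} on $[0,T]$, and \eqref{31} is just the boundary restriction of equations that now hold identically; moreover the time-derivative traces of $F_j^a$ at $t=0$ coincide with the $F_j^{(i)}$, because the recursion defining the traces uses the same equations, so \eqref{62^} and \eqref{63^} are not disturbed (one only has to track the regularity of solutions of this linear system with $H^{\mu+1}$ coefficients). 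A secondary point to watch: defining $\varphi^a$ by solving the kinematic boundary equation with coefficients $v^a|_{x_1=0}\in H^{\mu+1/2}(\partial\Omega_T)$ gives $\varphi^a$ only in $H^{\mu+1/2}(\partial\Omega_T)$, half a derivative short of the $H^{\mu+1}$ claimed in \eqref{63^}; the construction indicated in Remark \ref{rAS} instead takes $\varphi^a$ as a lift of the traces $\varphi^{(j)}$ (hence in $H^{\mu+1}$) and enforces the first boundary condition by correcting the normal velocity, as is done for the modified state in Proposition \ref{p3}.
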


We omit the proof of Lemma \ref{l2} because it is absolutely analogous to the proof of lemma 6 in \cite{T09}, lemma 4.3 in \cite{Tcpam}, lemma 21 in \cite{ST} and lemma 4.3 in \cite{MTTcont}.

\begin{remark}{\rm
We construct the approximate solution by lifting the trace $(U_0,\varphi_0)$ from the hypersurface $t=0$ to $\mathbb{R}\times\Omega$. This is why $(U_0,\varphi_0)\in H^{\mu +1/2}(\Omega)\times H^{\mu +1/2}(\partial\Omega )$ and $(U^{a},\varphi^a)\in H^{\mu +1}(\Omega_T)\times H^{\mu +1}(\partial\Omega_T)$. In this connection, we note that there was a technical mistake in \cite{Tcpam} in the definition of the compatibility conditions requiring that $\big(U^{(j)}, \varphi^{(j)}\big)\in H^{\mu -j}(\Omega)\times H^{\mu-j}(\partial\Omega )$ but not $\big(U^{(j)}, \varphi^{(j)}\big)\in H^{\mu +1/2-j}(\Omega)\times H^{\mu +1/2-j}(\partial\Omega )$.
\label{rAS}
}\end{remark}

Without loss of generality we can suppose that
\begin{equation}
\|U_0\|_{H^{\mu +1/2}(\Omega)}+\|\varphi_0\|_{H^{\mu +1/2}(\partial\Omega)}\leq 1,\quad
\|\varphi_0\|_{H^{\mu +1/2}(\partial\Omega )}\leq 1/2.
\label{65^}
\end{equation}
Then for a sufficiently short time interval $[0,T]$ the smooth solution whose existence we are going to prove satisfies
$\|\varphi\|_{L_{\infty}([0,T]\times\partial\Omega)}\leq 1$, which implies $\partial_1\Phi\geq 1/2$
(recall that $\|\chi'\|_{L_{\infty}(\mathbb{R})}<1/2$, see Section \ref{s2}). Let $\mu$ be an integer number that will appear in the regularity assumption for the initial data in the existence theorem for problem \eqref{11.1}--\eqref{13.1}. Looking ahead, we take $\mu=m+7$, with $m\geq 6$ for Theorem \ref{t01} and $m\geq 8$ for Theorem \ref{t02}. In the end of the next section  we will see that this choice is suitable. Taking into account \eqref{65^}, we rewrite \eqref{63^} as
\begin{equation}
\|U^{a}\|_{H^{m +8}(\Omega_T)}+\|\varphi^a\|_{H^{m +8}(\partial\Omega_T)}\leq C_*,
\label{66^}
\end{equation}
where $C_*=C_1(1)$. Let us introduce
\begin{equation}
f^{a}:=\left\{ \begin{array}{lr}
- \mathbb{L}(U^{a},{\Psi}^{a} ) & \quad \mbox{for}\ t>0,\\
0 & \ \mbox{for}\ t<0.\end{array}\right.
\label{67^}
\end{equation}
Since $(U^{a},\varphi^a)\in H^{m +8}(\Omega_T)\times H^{m +8}(\partial\Omega_T )$, taking into account \eqref{62^}, we get $f^{a} \in H^{m+7}(\Omega_T)$ and
\begin{equation}
\|f^{a}\|_{H^{m+7}(\Omega_T)}\leq \delta_0 (T),
\label{68^}
\end{equation}
where the constant $\delta_0(T)\rightarrow 0$ as $T\rightarrow 0$. To prove estimate \eqref{68^} we use the Moser-type and embedding inequalities and the fact that $f^{a}$ vanishes in the past.

Given the approximate solution defined in Lemma \ref{l2}, $(U ,\varphi)= (U^{a},\varphi^a)+ (\widetilde{U} ,\tilde{\varphi})$ is a solution of the original problem \eqref{11.1}--\eqref{13.1} on $[0,T]\times\Omega$ if $(\widetilde{U} ,\tilde{\varphi})$ satisfies the following problem on $\Omega_T$ (tildes are dropped):
\begin{align}
 \mathcal{ L}(U ,{\Psi})=f^{a} &\quad\mbox{in}\ \Omega_T, \label{69^}\\[3pt]
 \mathcal{ B}(U ,\varphi )=0 &\quad\mbox{on}\ \partial\Omega_T,\label{70^}
\\[3pt]
 (U,\varphi )=0 &\quad  \mbox{for}\ t<0,\label{71^}
\end{align}
where
\[
\mathcal{ L}(U ,{\Psi} ):=\mathbb{L}(U^{a} +U  ,{\Psi}^{a}+{\Psi} ) -
\mathbb{L}(U^{a} ,{\Psi}^{a}),\quad
 \mathcal{ B}(U,\varphi):=\mathbb{B}(U^{a} +U ,\varphi^a+\varphi ).
\]
From now on we concentrate on the proof of the existence of solutions to problem \eqref{69^}--\eqref{71^}.

\section{Nash-Moser iteration}
\label{s5^}

\setcounter{subsubsection}{0}

We solve problem \eqref{69^}--\eqref{71^} by a suitable Nash-Moser-type iteration scheme. In short, this scheme is a modified Newton's scheme, and at each Nash-Moser iteration step we smooth the coefficient $u_n$ of a corresponding linear problem for $\delta u_n =u_{n+1}-u_n$. Errors of a classical Nash-Moser iteration are the ``quadratic'' error of Newton's scheme and the ``substitution'' error caused by the application of smoothing operators $S_{\theta}$  (see, e.g., \cite{Al,Herm,Sec16} and references therein). Moreover, we have the additional error caused by the introduction of an intermediate (or modified) state $u_{n+1/2}$ satisfying some constraints. In our case, these constraints are  \eqref{29} and \eqref{30} and either the Rayleigh-Taylor sign condition \eqref{42} or the non-collinearity condition \eqref{40} together with assumption \eqref{31}, which were required to be fulfilled for the basic state \eqref{a21}. Also, the additional error is caused by dropping the zeroth-order term in $\Psi$ in the  linearized interior equations written in terms of the ``good unknown'' (see \eqref{32}).

Since assumption \eqref{37} on the basic state and the tame estimate \eqref{38} in Theorem \ref{t4.1} are the same as the corresponding assumption and tame estimate in theorem 3.1 in \cite{Tcpam}, the Nash-Moser procedure towards the proof of the existence result of Theorem \ref{t01} is absolutely the same as in \cite{Tcpam}.\footnote{The process of construction of the modified state $(U_{n+1/2},\varphi_{n+1/2})$ is similar to that in \cite{Tcpam} because the constraints for the deformation gradient in \eqref{30} are not more involved as the first assumption in \eqref{30}.} Referring to \cite{Tcpam} and taking into account Remark \ref{rAS}, we get the existence result of Theorem \ref{t01}.
The basic a priori estimates \eqref{41} and \eqref{43} for the linearized problem imply \textit{uniqueness} of a solution to the nonlinear problem \eqref{11.1}--\eqref{13.1} that can be proved by standard argument (see, e.g., \cite{Ticvs}). With this short remark, we shall no longer discuss the problem of uniqueness. That is, the proof of Theorem \ref{t01} is complete and it remains to prove the existence of smooth solutions to problem \eqref{11.1}--\eqref{13.1} formulated in Theorem \ref{t02}.

\setcounter{subsubsection}{0}

\subsection{Iteration scheme for solving problem (\ref{69^})--(\ref{71^})}

Regarding the proof of the existence result of Theorem \ref{t02}, we may be very brief here and almost everywhere just refer to \cite{Tcpam}. The only place which requires little attention is the construction of the modified state because the additional constraint \eqref{31} is more involved as those in  \cite{Tcpam}. Following \cite{Tcpam}, we describe the iteration scheme for problem \eqref{69^}--\eqref{71^}. We first list the important properties of smoothing operators \cite{Al,Herm,Sec16}.

\begin{proposition}
There exists such a family $\{S_{\theta}\}_{\theta\geq 1}$ of smoothing operators in $H^s(\Omega_T)$ acting on the class of functions vanishing in the past that
\begin{align}
  \|   S_{\theta}u\|_{H^{\beta}(\Omega_T) }\leq C\theta^{(\beta-\alpha )_+}\|u\|_{H^{\alpha}(\Omega_T) },\quad &    \alpha ,\beta \geq 0,
\label{72^}\\
  \|  S_{\theta}u-u\|_{H^{\beta}(\Omega_T) }\leq C\theta^{\beta-\alpha }\|u\|_{H^{\alpha}(\Omega_T) },\quad & 0 \leq \beta \leq \alpha ,  \label{73^}\\
  \Bigl\|  \frac{d}{d\theta}S_{\theta}u\Bigr\|_{H^{\beta}(\Omega_T) }\leq C\theta^{\beta-\alpha -1}\|u\|_{H^{\alpha}(\Omega_T) },\quad    & \alpha  ,\beta \geq 0, \label{74^}
\end{align}
where $C>0$ is a constant, and $(\beta-\alpha )_+:=\max (0,\beta -\alpha )$.  Moreover, there is another family of smoothing operators (still denoted
$S_{\theta}$) acting on functions defined on the boundary $\partial\Omega_T$ and meeting properties \eqref{72^}--\eqref{74^} with the norms $\|\cdot \|_{H^{\alpha}(\partial\Omega_T)}$.
\label{p1a}
\end{proposition}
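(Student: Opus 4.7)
The plan is to adapt the classical Nash--Moser smoothing construction (as in the cited works \cite{Al,Herm,Sec16}) to our setting, the only delicate point being the preservation of the vanishing-in-the-past property required by the iteration scheme. Since the construction is well known in the literature, I would organize the argument in three steps: reduction to smoothing on the whole space, control of the boundary/extension, and causal mollification in time.

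First, I would construct the smoothing operators on the full space $\mathbb{R}^4$ by a Littlewood--Paley-type decomposition. Pick $\chi\in\mathcal S(\mathbb{R}^4)$ with $\int\chi=1$ and $\widehat{\chi}$ supported in a fixed ball, and set $\chi_\theta(z)=\theta^4\chi(\theta z)$, $S_\theta^0 u=\chi_\theta\ast u$. Standard Fourier arguments yield \eqref{72^}--\eqref{74^} on $\mathbb{R}^4$: \eqref{72^} comes from the trivial Plancherel estimate for $\beta\le\alpha$ and a multiplier bound by $\theta^{\beta-\alpha}$ for $\beta>\alpha$; \eqref{73^} comes from the vanishing moments $\int z^\gamma (\chi_\theta-\delta)\,dz$ in Fourier; and \eqref{74^} from the scaling of $\frac{d}{d\theta}\chi_\theta$.

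Next, to handle the domain $\Omega_T=(-\infty,T]\times\{x_1>0\}\times\mathbb{T}^2$, I would compose with a continuous extension operator $E\colon H^s(\Omega_T)\to H^s(\mathbb{R}^4)$ (e.g.\ Stein's higher-order reflection across $x_1=0$, periodicity being free in $x'$, and a standard past/future extension in $t$), and then restrict: $S_\theta u:=(S_\theta^0 E u)\restriction_{\Omega_T}$. Boundedness of $E$ in every $H^s$ (with a fixed constant independent of $s$ up to any prescribed order) combined with the $\mathbb{R}^4$ estimates yields \eqref{72^}--\eqref{74^} on $\Omega_T$. The boundary version is built identically with one fewer spatial variable, since $\partial\Omega_T=(-\infty,T]\times\mathbb{T}^2$ already has no spatial boundary and only requires a time extension.

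The key point where care is needed is the \emph{causality}: we must ensure that $S_\theta u$ vanishes in the past whenever $u$ does. To achieve this I would take $\chi$ of tensor form $\chi(t,x)=\chi_0(t)\chi_1(x)$ and choose $\chi_0\in\mathcal S(\mathbb R)$ with $\int\chi_0=1$ and $\mathrm{supp}\,\chi_0\subset(-\infty,0]$ (such a Schwartz function does exist, e.g.\ the Paley--Wiener construction based on a smooth cutoff in frequency with a suitable phase, or more concretely a shifted Gaussian truncated appropriately so that all moment conditions persist). Then the time-convolution $(S_\theta u)(t,\cdot)=\int_{-\infty}^t\chi_{0,\theta}(t-s)(\chi_{1,\theta}\ast_x Eu)(s,\cdot)\,ds$ only involves values $u(s,\cdot)$ for $s\le t$, so vanishing in the past is preserved; simultaneously, because $\chi_0$ is still Schwartz with all its moments, estimates \eqref{72^}--\eqref{74^} are unaffected. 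This one-sided refinement is the main technical obstacle, but once the supported-in-the-past mollifier $\chi_0$ is in hand the remainder of the proof is purely the standard Fourier-analytic calculation. The boundary case $\partial\Omega_T$ is treated verbatim, replacing the spatial convolution in $\mathbb R^3$ by one in $\mathbb R\times\mathbb T^2$.
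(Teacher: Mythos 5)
The paper itself offers no proof of Proposition \ref{p1a}: it is quoted from the literature (\cite{Al,Herm,Sec16}), so your attempt to actually build the operators is a legitimately independent route, and its skeleton (whole-space mollification, Stein extension in $x_1$, periodicity in $x'$, plus a one-sided mollifier in time to preserve the vanishing-in-the-past property) is indeed the standard way such ``causal'' smoothing operators are constructed. However, the step you yourself single out as the crux is stated incorrectly. You require $\mathrm{supp}\,\chi_0\subset(-\infty,0]$ and then write $(S_\theta u)(t,\cdot)=\int_{-\infty}^t\chi_{0,\theta}(t-s)(\cdots)(s,\cdot)\,ds$; these two statements are incompatible. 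With $\mathrm{supp}\,\chi_0\subset(-\infty,0]$ the kernel $\chi_{0,\theta}(t-s)$ is supported where $s\geq t$, so the convolution averages over \emph{future} times, and for $t<0$ it picks up values of $u$ at positive times: vanishing in the past is destroyed, which is exactly the property you set out to protect. The fix is to take $\mathrm{supp}\,\chi_0\subset[0,+\infty)$, so that $S_\theta u(t)$ depends only on $u(s)$ for $s\leq t$; this also makes your extension of $u$ beyond $t=T$ unnecessary, since values at times $>T$ are never used (only the Stein extension in $x_1$ and the trivial periodic setting in $x'$ remain).

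A second, smaller gap is the existence of the one-sided kernel with the required moment conditions. Your concrete suggestion (``a shifted Gaussian truncated appropriately'') does not have vanishing higher moments, and a compactly supported kernel can never have \emph{all} moments of order $\geq 1$ equal to zero (its Fourier transform is entire, so $\widehat{\chi_0}-1$ cannot vanish to infinite order at the origin without being identically zero). Since estimate \eqref{73^} with rate $\theta^{\beta-\alpha}$ requires vanishing moments up to order $\alpha-\beta$, you must either (i) observe that in this paper only a finite range of Sobolev indices is ever used (up to roughly $\tilde\alpha+2$), so finitely many moment conditions suffice and are easily arranged for a smooth kernel supported in, say, $[1,2]$; or (ii) invoke the genuinely nontrivial fact that there exist Schwartz functions supported in a half-line with unit mass and all higher moments zero (constructed on the Fourier side as boundary values of bounded holomorphic functions on a half-plane agreeing with $1$ to infinite order at the origin). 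Either route closes the gap, but as written the causality step fails and the kernel you propose does not satisfy the hypotheses your own Fourier argument needs.
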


We choose
\[
U_0 =0,\quad \varphi_0=0
\]
and assume that
\[
(U_k, \varphi_k)=(p_k,v_{1,k},v_{2,k},v_{3,k},F_{11,k},F_{21,k},\ldots ,F_{33,k},\varphi_k)
\]
are already given for $k=0,\ldots ,n$. Moreover, let $(U_k,\varphi_k)$ vanish in the past, i.e., they satisfy \eqref{71^}.  We define
\[
U_{n+1}=U_n+\delta U_n,\quad \varphi_{n+1}=\varphi_n+\delta \varphi_n,
\]
where the differences $\delta{U}_n$ and $\delta \varphi_n$ solve the linear problem
\begin{equation}
\left\{\begin{array}{lr}
\mathbb{L}'_e({U}^a +{U}_{n+1/2} ,{\Psi}^a+{\Psi}_{n+1/2})\delta\dot{{U}}_n={f}_n &\quad\mbox{in}\ \Omega_T, \\[6pt]
\mathbb{B}'_{n+1/2}(\delta\dot{{U}}_n,\delta {\varphi}_n)={g}_n  & \quad\mbox{on}\ \partial\Omega_T,\\[6pt]
(\delta\dot{{U}}_n,\delta \varphi_n)=0 &\quad \mbox{for}\ t<0.
\end{array}\right.
\label{75^}
\end{equation}
Here
\begin{equation}
\delta\dot{U}_n:= \delta{U}_n-\frac{\delta\Psi_n}{\partial_1(\Phi^a+\Psi_{n+1/2})}\,\partial_1({U}^a+{U}_{n+1/2})
\label{76^}
\end{equation}
is the ``good unknown'' (cf. \eqref{32}),
\[
\mathbb{B}'_{n+1/2}:=\mathbb{B}'_e(({U}^a +{U}_{n+1/2})|_{x_1=0} ,\varphi^a+\varphi_{n+1/2}),
\]
the operators $\mathbb{L}'_e$ and $\mathbb{B}'_e$ are defined in \eqref{L'e} and \eqref{B'e}, and $({U}_{n+1/2},\varphi_{n+1/2})$ is a smooth modified state such that $({U}^a +{U}_{n+1/2},\varphi^a+\varphi_{n+1/2})$ satisfies constraints  \eqref{29}--\eqref{31} and \eqref{40} ($\Psi_n$, ${\Psi}_{n+1/2}$, and $\delta\Psi_n$ are associated to $\varphi_n$, $\varphi_{n+1/2}$, and $\delta\varphi_n$ like ${\Psi}$ is associated to $\varphi$).
The right-hand sides ${f}_n$ and ${g}_n$  are defined through the accumulated errors at the step $n$.

The errors of the iteration scheme are defined from the following chains of decompositions:
\begin{align*}
& \mathcal{ L}({U}_{n+1} ,{\Psi}_{n+1})-\mathcal{ L}({U}_{n} ,{\Psi}_{n})\\
& \quad =\mathbb{L}'({U}^a +{U}_{n} ,{\Psi}^a+{\Psi}_{n})(\delta{{U}}_n,\delta{\Psi}_{n})+{e}'_n\\
& \quad =\mathbb{L}'({U}^a +S_{\theta_n}{U}_{n} ,{\Psi}^a+S_{\theta_n}{\Psi}_{n})(\delta{{U}}_n,\delta{\Psi}_{n})+{e}'_n+{e}''_n
\\
&\quad= \mathbb{L}'({U}^a +{U}_{n+1/2} ,{\Psi}^a+{\Psi}_{n+1/2})(\delta{{U}}_n,\delta{\Psi}_{n})+{e}'_n+{e}''_n+{e}'''_n\\
& \quad =\mathbb{L}'_e({U}^a +{U}_{n+1/2},{\Psi}^a+{\Psi}_{n+1/2})\delta\dot{{U}}_n+{e}'_n+{e}''_n+{e}'''_n+{D}_{n+1/2}\delta{\Psi}_{n}
\end{align*}
and
\begin{align*}
&\mathcal{ B}({U}_{n+1}|_{x_1=0},\varphi_{n+1})-\mathcal{ B}({U}_{n}|_{x_1=0},\varphi_{n})
\\ &\quad =\mathbb{B}'(({U}^a +{U}_{n})|_{x_1=0},\varphi^a+\varphi_{n})(\delta{{U}}_n|_{x_1=0},\delta \varphi_{n})+\tilde{e}'_n
\\ &\quad =\mathbb{B}'(({U}^a +S_{\theta_n}{U}_{n})|_{x_1=0},\varphi^a+S_{\theta_n}\varphi_{n})(\delta{{U}}_n|_{x_1=0},\delta \varphi_{n})+\tilde{e}'_n+\tilde{e}''_n
\\ &\quad =\mathbb{B}'_{n+1/2}(\delta\dot{{U}}_n,\delta\varphi_n)+\tilde{e}'_n+\tilde{e}''_n+\tilde{e}'''_n,
\end{align*}
where $S_{\theta_n}$ are smoothing operators enjoying the properties of Proposition \ref{p1a}, with the sequence $(\theta_n)$ defined by
\[
\theta_0\geq 1,\quad \theta_n=\sqrt{\theta_0+n} ,
\]
and we use the notation
\[
{D}_{n+1/2}:= \frac{1}{\partial_1(\Phi^a+\Psi_{n+1/2})}\,\partial_1\left\{ \mathbb{L}(U^a +U_{n+1/2} ,{\Psi}^a+{\Psi}_{n+1/2})\right\}.
\]
The errors ${e}'_n$ and  $\tilde{e}'_n$ are the usual quadratic errors of Newton's method, and ${e}''_n$,
$\tilde{ e}''_n$ and ${e}'''_n$, $\tilde{e}'''_n$ are the first and the second substitution errors
respectively.

Let
\begin{equation}
{e}_n:={e}'_n+{e}''_n+{e}'''_n+{D}_{n+1/2}\delta{\Psi}_{n}, \quad
\tilde{e}_n:= \tilde{e}'_n+\tilde{e}''_n+\tilde{e}'''_n,
\label{77^}
\end{equation}
then the accumulated errors at the step $n\geq 1$ are
\begin{equation}
{E}_n=\sum_{k=0}^{n-1}{e}_k,\quad \widetilde{E}_n=\sum_{k=0}^{n-1}\tilde{e}_k,
\label{78^}
\end{equation}
with ${E}_0:=0$ and $\widetilde{E}_0:=0$. The right-hand sides ${f}_n$ and ${g}_n$ are recursively computed from the equations
\begin{equation}
\sum_{k=0}^{n}{f}_k+S_{\theta_n}{E}_n=S_{\theta_n}{f}^a,\quad
\sum_{k=0}^{n}{g}_k+S_{\theta_n}\widetilde{E}_n=0,
\label{79^}
\end{equation}
where ${f}_0:=S_{\theta_0}{f}^a$ and ${g}_0:=0$. Since $S_{\theta_N}\rightarrow I$ as $N\rightarrow \infty$, one can show that we formally obtain
the solution to problem \eqref{69^}--\eqref{71^} from $\mathcal{ L}(U_{N} ,{\Psi}_{N})\rightarrow {f}^a$ and
$\mathcal{ B}(U_{N}|_{x_1=0},\varphi_{N})\rightarrow 0$, provided that $({e}_N,\tilde{e}_N)\rightarrow 0$.

Below we closely follow the plan of \cite{Tcpam}. Let us first formulate our inductive hypothesis, which is actually the same as in \cite{Tcpam}.

\subsection{Inductive hypothesis}

Given a small number $\delta >0$,\footnote{We use the same Greek letter $\delta $ as in the differences $\delta{U}_n$ and $\delta \varphi_n$ above. But we hope that this will not lead to confusion because from the context it is always clear that $\delta $ written before ${U}_n$ or $\varphi_n$ is not a multiplier.} the integer $\alpha :=m+1$, and an integer $\tilde{\alpha}$, our inductive hypothesis reads:
\[
(H_{n-1})\quad \left\{
\begin{array}{ll}
{\rm a})\;& \forall\, k=0,\ldots , n-1,\quad \forall s\in [3,\tilde{\alpha}]\cap\mathbb{N},\\[3pt]
 & \|\delta U_k\|_{H^s(\Omega_T)} +\|\delta \varphi_k\|_{H^s(\partial\Omega_T)}\leq \delta\theta_k^{s-\alpha -1}\Delta_k,\\[6pt]
{\rm b}) & \forall\, k=0,\ldots , n-1,\quad \forall s\in [3,\tilde{\alpha}-2]\cap\mathbb{N},\\[3pt]
 & \|\mathcal{ L}(U_k,{\Psi}_k)-f^a\|_{H^s(\Omega_T)}\leq 2\delta\theta_k^{s-\alpha -1},\\[6pt]
{\rm c}) & \forall\, k=0,\ldots , n-1,\quad \forall s\in [4,\alpha ]\cap\mathbb{N},\\[3pt]
 & \|\mathcal{ B}(U_k|_{x_1=0},\varphi_k)\|_{H^s(\partial\Omega_T)}\leq \delta\theta_k^{s-\alpha -1},
\end{array}\right.
\]
where $\Delta_k= \theta_{k+1}-\theta_k$. Note that the sequence $(\Delta_n)$ is decreasing and tends to zero, and
\[
\forall\, n\in\mathbb{N},\quad \frac{1}{3\theta_n}\leq\Delta_n=\sqrt{\theta_n^2+1} -\theta_n\leq \frac{1}{2\theta_n}.
\]
Recall that $(U_k,\varphi_k)$ for $k=0,\ldots ,n$ are also assumed to satisfy \eqref{71^}. Looking a few steps ahead, we observe that
we will need to use inequalities \eqref{66^} and \eqref{68^} with $m=\tilde{\alpha}-6$, i.e., we now choose $\tilde{\alpha}=m+6 =\alpha +5$.
Our goal is to prove that ($H_{n-1}$) implies ($H_n$) for a suitable choice of parameters $\theta_0\geq 1$ and $\delta >0$, and for a sufficiently short time $T>0$. After that we shall prove ($H_0$). From now on we assume that ($H_{n-1}$) holds.  As in \cite{CS2,Tcpam}, we have the following consequences of ($H_{n-1}$) and Proposition \ref{p1a}.

\begin{lemma}
If $\theta_0$ is big enough, then for every $k=0,\ldots ,n$ and for every integer $s\in [3,\tilde{\alpha}]$ we have
\begin{align}
& \|{U}_k \|_{H^s(\Omega_T)}+\| \varphi_k\|_{H^{s}(\partial\Omega_T)}\leq\delta\theta_k^{(s-\alpha )_+}, \quad \alpha\neq s,\label{80g}\\[3pt]
& \|{U}_k \|_{H^{\alpha}(\Omega_T)}+\| \varphi_k\|_{H^{\alpha}(\partial\Omega_T)}\leq\delta\log \theta_k,  \label{81^}\\[3pt]
& \|(I-S_{\theta_k}){U}_k \|_{H^s(\Omega_T)}+\|(1-S_{\theta_k}) \varphi_k\|_{H^{s}(\partial\Omega_T)}\leq C\delta\theta_k^{s-\alpha },  \label{82g}
\end{align}
and for every $k=0,\ldots ,n$ and for every integer $s\geq 3$ we have
\begin{align}
&\|S_{\theta_k}{U}_k \|_{H^s(\Omega_T)}+\| S_{\theta_k} \varphi_k\|_{H^{s}(\partial\Omega_T)}\leq C\delta\theta_k^{(s-\alpha )_+}, \quad \alpha\neq s,\label{83g}\\[3pt]
&\|S_{\theta_k}{U}_k \|_{H^{\alpha}(\Omega_T)}+\| S_{\theta_k}\varphi_k\|_{H^{\alpha}(\partial\Omega_T)}\leq C\delta\log \theta_k.   \label{84g}
\end{align}
\label{l3}
\end{lemma}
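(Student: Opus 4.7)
The plan is to exploit the telescoping representation $U_k=\sum_{j=0}^{k-1}\delta U_j$ (and similarly for $\varphi_k$), which is available because $U_0=0$ and $\varphi_0=0$, together with the smoothing estimates of Proposition~\ref{p1a}.

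First, for \eqref{80g} and \eqref{81^}, I would use the triangle inequality and hypothesis (a) of $(H_{n-1})$ to write, for any integer $s\in[3,\tilde{\alpha}]$,
\[
\|U_k\|_{H^s(\Omega_T)}+\|\varphi_k\|_{H^s(\partial\Omega_T)}\leq \delta\sum_{j=0}^{k-1}\theta_j^{s-\alpha-1}\Delta_j.
\]
Since $\Delta_j=\theta_{j+1}-\theta_j$ and $\theta\mapsto\theta^{s-\alpha-1}$ is monotone, this discrete sum is comparable, up to a universal multiplicative factor, to $\int_{\theta_0}^{\theta_k}\theta^{s-\alpha-1}\,d\theta$. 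Three cases arise: if $s>\alpha$, this integral equals $(\theta_k^{s-\alpha}-\theta_0^{s-\alpha})/(s-\alpha)\leq\theta_k^{s-\alpha}$, since the integers $s,\alpha$ satisfy $s-\alpha\geq 1$; if $s=\alpha$, it equals $\log(\theta_k/\theta_0)\leq\log\theta_k$; and if $s<\alpha$, it is dominated by $\theta_0^{s-\alpha}/(\alpha-s)$, which can be made $\leq 1$ by enlarging $\theta_0$. Combined, these three cases deliver \eqref{80g} and \eqref{81^}.

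Next, for \eqref{82g}, I would apply the smoothing estimate \eqref{73^} with the pair $(\beta,\alpha')=(s,\tilde{\alpha})$, giving
\[
\|(I-S_{\theta_k})U_k\|_{H^s(\Omega_T)}\leq C\theta_k^{s-\tilde{\alpha}}\|U_k\|_{H^{\tilde{\alpha}}(\Omega_T)},
\]
and analogously for $\varphi_k$. Inserting \eqref{80g} applied at $s=\tilde{\alpha}$ (for which $\tilde{\alpha}>\alpha$), namely $\|U_k\|_{H^{\tilde{\alpha}}}\leq\delta\theta_k^{\tilde{\alpha}-\alpha}$, produces the desired bound $C\delta\theta_k^{s-\alpha}$.

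Finally, for \eqref{83g} and \eqref{84g}, I would split on the range of $s$. In the range $3\leq s\leq\tilde{\alpha}$, the $H^s$-boundedness of $S_{\theta_k}$ furnished by \eqref{72^} with $\beta=\alpha'=s$ reduces matters directly to \eqref{80g} or \eqref{81^}. For $s>\tilde{\alpha}$, \eqref{72^} with $\alpha'=\tilde{\alpha}$ and $\beta=s$ yields $\|S_{\theta_k}U_k\|_{H^s}\leq C\theta_k^{s-\tilde{\alpha}}\|U_k\|_{H^{\tilde{\alpha}}}\leq C\delta\theta_k^{s-\alpha}$. The main subtlety throughout is constant management in \eqref{80g}--\eqref{81^}, which carry no prefactor $C$: the universal constants arising from the integral comparison and from the harmonic-type factors $1/(s-\alpha)$ and $1/(\alpha-s)$ must be absorbed by the hypothesis that $\theta_0$ is sufficiently large, which is precisely the role played by that freedom in the statement of the lemma.
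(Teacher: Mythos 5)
Your proposal is correct and follows essentially the same route as the paper, which does not spell the proof out but imports it from \cite{CS2,Tcpam}: telescoping $U_k=\sum_{j=0}^{k-1}\delta U_j$ with point a) of $(H_{n-1})$ and a series--integral comparison for \eqref{80g}--\eqref{81^}, then the smoothing properties \eqref{72^}--\eqref{73^} with the high index $\tilde{\alpha}$ for \eqref{82g}--\eqref{84g}. One small point of rigor: in the case $s=\alpha$ a multiplicative constant in front of $\log(\theta_k/\theta_0)$ could not be absorbed by enlarging $\theta_0$ (since $\theta_k\to\infty$), so one should use the additive form of the comparison, e.g. $\theta_j^{-1}\Delta_j\le \tfrac{1}{2}(\theta_0^2+j)^{-1}$, which gives $\sum_j\theta_j^{-1}\Delta_j\le \log(\theta_k/\theta_0)+C\theta_0^{-2}\le\log\theta_k$ for $\theta_0$ large.
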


\subsection{Estimate of the quadratic and first substitution errors}

Referring to \cite{Tcpam} for the proof (see there lemmata 4.8 and 4.9), we have the following results:

\begin{lemma}
Let $\alpha \geq 4$. There exist $\delta >0$ sufficiently small, and $\theta_0 \geq 1$ sufficiently large, such that
for all $k=0,\ldots n-1$, and for all integer $s\in [3,\widetilde{\alpha}-1]$, we have the estimates
\begin{align}
 \|{e}'_k\|_{H^s(\Omega_T)}\leq & C\delta^2\theta_k^{L_1(s)-1}\Delta_k,\nonumber\\
 \|\tilde{e}'_k\|_{H^s(\partial\Omega_T)}\leq & C\delta^2\theta_k^{L_1(s)-1}\Delta_k,\nonumber
\end{align}
where $L_1(s)=\max \{ (s+1-\alpha )_+ +4-2\alpha ,s+2-2\alpha \}$.
\label{l4}
\end{lemma}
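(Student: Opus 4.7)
The plan is to exploit the fact that $e'_k$ and $\tilde e'_k$ are the \emph{quadratic} remainders in Taylor's expansion of the nonlinear operators $\mathcal L$ and $\mathcal B$ around $(U^a+U_k,\Psi^a+\Psi_k)$, and then apply the Moser-type calculus inequalities \eqref{c40}--\eqref{c40'} in the tame form, together with the inductive bounds of Lemma \ref{l3}. First I would write
\[
e'_k=\int_0^1(1-\tau)\,\mathbb{L}''(U^a+U_k+\tau\,\delta U_k,\,\Psi^a+\Psi_k+\tau\,\delta\Psi_k)\bigl((\delta U_k,\delta\Psi_k),(\delta U_k,\delta\Psi_k)\bigr)\,\mathrm d\tau,
\]
and an analogous identity for $\tilde e'_k$ in terms of $\mathbb{B}''$. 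Since $\mathbb{L}$ is quasilinear of first order, the integrand is a sum of terms of the schematic form $\hat b(\cdot)\,\delta U_k^{(1)}\delta U_k^{(2)}$ where each $\delta U_k^{(i)}$ stands for a component of $\delta U_k$, $\delta\Psi_k$ or its first-order derivative, and $\hat b$ is a smooth function of $(U^a+U_k+\tau\delta U_k,\Psi^a+\Psi_k+\tau\delta\Psi_k)$ and its first derivatives; the boundary operator $\mathbb B$ is even simpler because it is bilinear in the unknowns so $\mathbb B''$ is a constant bilinear form and the analysis of $\tilde e'_k$ is a strict simplification of the interior case.

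Next I would apply the refined Moser inequality \eqref{c40'} (and its trace counterpart) combined with the composition estimate \eqref{c41} to obtain the tame bilinear bound
\[
\|e'_k\|_{H^s(\Omega_T)}\leq C\Bigl\{\|(\delta U_k,\delta\varphi_k)\|_{H^{s+1}}\|(\delta U_k,\delta\varphi_k)\|_{W^{1,\infty}}+\|(\delta U_k,\delta\varphi_k)\|^2_{W^{1,\infty}}\,\bigl(1+\|\mathrm{coeff}\|_{H^{s+1}}\bigr)\Bigr\},
\]
uniformly for $\tau\in[0,1]$, where $\|\mathrm{coeff}\|_{H^{s+1}}$ is controlled by $\|U^a\|_{H^{s+1}}+\|\varphi^a\|_{H^{s+1}}+\|U_k\|_{H^{s+1}}+\|\varphi_k\|_{H^{s+1}}$ plus the same with $\delta U_k,\delta\varphi_k$; the latter are absorbed since $s+1\leq\tilde\alpha$. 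Using \eqref{66^} for the approximate solution and \eqref{80g}--\eqref{81^} for $(U_k,\varphi_k)$, the coefficient norm is bounded by a constant (or at worst by $C(1+\theta_k^{s+1-\alpha})$ when $s+1>\alpha$), and the corresponding term is controlled by the first one up to an acceptable factor for $\alpha\geq 4$.

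The third step is purely arithmetic: by ($H_{n-1}$),
\[
\|(\delta U_k,\delta\varphi_k)\|_{H^{s+1}}\leq\delta\theta_k^{s-\alpha}\Delta_k,
\]
and since $s\geq 3$ Sobolev embedding and ($H_{n-1}$) at level $3$ yield
\[
\|(\delta U_k,\delta\varphi_k)\|_{W^{1,\infty}}\leq C\|(\delta U_k,\delta\varphi_k)\|_{H^3}\leq C\delta\theta_k^{2-\alpha}\Delta_k.
\]
Multiplying the two bounds gives $C\delta^2\theta_k^{s+2-2\alpha}\Delta_k^2$, and using $\Delta_k\leq\tfrac12\theta_k^{-1}$ converts one factor of $\Delta_k$ into $\theta_k^{-1}$, producing the announced exponent $L_1(s)-1=s+1-2\alpha$ (here I use that for $\alpha\geq 4$ and $s\geq 3$ the max in the definition of $L_1(s)$ is always attained by the second argument $s+2-2\alpha$). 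The $\tilde e'_k$ bound is obtained verbatim after taking the trace on $\partial\Omega_T$, with a half-derivative loss absorbed in the $s+1$ index.

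The only delicate point I anticipate is keeping track of the \emph{tame} dependence on the coefficient $\|\mathrm{coeff}\|_{H^{s+1}}$ when $s>\alpha-1$: one must make sure that the high-norm factor multiplying the $L^\infty$ square can itself be traded, via $\Delta_k^2\leq C\theta_k^{-1}\Delta_k$ and the logarithmic/power bounds of Lemma \ref{l3}, against one power of $\theta_k$, so that it does not worsen the exponent. This is exactly the calculation carried out in \cite{Tcpam} in the Euler case and, because the elastic terms are polynomial (indeed quadratic in $F_j$) and involve no structurally new nonlinearity in the principal symbol, the same bookkeeping applies here without modification. With this, both inequalities follow simultaneously, completing the plan.
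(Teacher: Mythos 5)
Your proposal is correct and takes essentially the same route as the paper: the paper does not reproduce a proof of Lemma \ref{l4} but simply cites lemmata 4.8--4.9 of \cite{Tcpam}, whose argument is precisely the one you reconstruct (second-order Taylor remainder for $\mathbb{L}$ and $\mathbb{B}$, Moser-type tame product/composition estimates, the inductive bounds of $(H_{n-1})$ and Lemma \ref{l3}, and $\Delta_k\leq \tfrac12\theta_k^{-1}$ to trade one factor of $\Delta_k$ for $\theta_k^{-1}$). Your additional observation that for $\alpha\geq 4$ and $s\geq 3$ the maximum defining $L_1(s)$ is attained at $s+2-2\alpha$ is consistent with, and slightly sharpens, the stated exponent.
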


\begin{lemma}
Let $\alpha \geq 4$. There exist $\delta >0$ sufficiently small, and $\theta_0 \geq 1$ sufficiently large, such that
for all $k=0,\ldots n-1$, and for all integer $s\in [6,\widetilde{\alpha}-2]$, one has
\begin{align}
& \|{e}''_k\|_{H^s(\Omega_T)}\leq C\delta^2\theta_k^{L_2(s)-1}\Delta_k,\nonumber\\
& \|\tilde{e}''_k\|_{H^s(\partial\Omega_T)}\leq C\delta^2\theta_k^{L_2(s)-1}\Delta_k,\nonumber
\end{align}
where $L_2(s)=\max \{ (s+1-\alpha )_+ +6-2\alpha ,s+5-2\alpha \}$.
\label{l5}
\end{lemma}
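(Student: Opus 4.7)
The plan is to estimate $e''_n$ as the integral remainder of a Taylor expansion about the smoothed state and then apply Moser-type calculus inequalities, distributing the $H^s$ derivatives in the two ways that produce the two arguments of the maximum defining $L_2(s)$.

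First, I would set $V_n:=(I-S_{\theta_n})U_n$, $W_n:=(I-S_{\theta_n})\Psi_n$ and write, by the fundamental theorem of calculus applied to $\tau\mapsto \mathbb{L}'(U^a+S_{\theta_n}U_n+\tau V_n,\Psi^a+S_{\theta_n}\Psi_n+\tau W_n)(\delta U_n,\delta\Psi_n)$,
\begin{equation*}
e''_n=\int_0^1 \mathbb{L}''\bigl(\widehat{U}_\tau,\widehat{\Psi}_\tau\bigr)\bigl((V_n,W_n),(\delta U_n,\delta\Psi_n)\bigr)\,d\tau,
\end{equation*}
where $\widehat{U}_\tau=U^a+S_{\theta_n}U_n+\tau V_n$ and $\widehat{\Psi}_\tau=\Psi^a+S_{\theta_n}\Psi_n+\tau W_n$. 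The operator $\mathbb{L}''$ is bilinear and symmetric in its two pair arguments, involves at most one spatial or time derivative of each, and carries coefficients that are smooth functions of $(\widehat{U}_\tau,\widehat{\Psi}_\tau)$ and of their first derivatives, exactly as in the analogous step of \cite{Tcpam}.

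Next, applying the Moser-type inequalities \eqref{c40}--\eqref{c41} (together with \eqref{c40'}) to a generic trilinear term $b(\widehat{U}_\tau,\widehat{\Psi}_\tau,\partial\widehat{U}_\tau,\partial\widehat{\Psi}_\tau)\cdot P(V_n,W_n)\cdot Q(\delta U_n,\delta\Psi_n)$, where $P,Q$ are first-order differential operators, gives an estimate of the form
\begin{equation*}
\|e''_n\|_{H^s(\Omega_T)}\le C\Big\{\|(V_n,W_n)\|_{H^{s+1}}\|(\delta U_n,\delta\Psi_n)\|_{W^{1,\infty}}+\|(V_n,W_n)\|_{W^{1,\infty}}\|(\delta U_n,\delta\Psi_n)\|_{H^{s+1}}
\end{equation*}
\begin{equation*}
+\|\mathrm{coeff}\|_{H^{s+1}}\|(V_n,W_n)\|_{W^{1,\infty}}\|(\delta U_n,\delta\Psi_n)\|_{W^{1,\infty}}\Big\},
\end{equation*}
where $\|\mathrm{coeff}\|_{H^{s+1}}\le C(1+\|\widehat{U}_\tau\|_{H^{s+2}}+\|\widehat{\Psi}_\tau\|_{H^{s+2}})$. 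Using \eqref{66^} and the consequences \eqref{80g}--\eqref{84g} of the inductive hypothesis, the coefficient norm is controlled by $C(1+\theta_n^{(s+2-\alpha)_+})$; using \eqref{82g} one has $\|(V_n,W_n)\|_{H^{s+1}}\le C\delta\theta_n^{s+1-\alpha}$ and $\|(V_n,W_n)\|_{W^{1,\infty}}\le C\delta\theta_n^{4-\alpha}$; the inductive hypothesis $(H_{n-1})$ part a) gives $\|(\delta U_n,\delta\Psi_n)\|_{H^{s+1}}\le \delta\theta_n^{s-\alpha}\Delta_n$ (requiring $s+1\le\tilde{\alpha}$, which holds since $s\le\tilde{\alpha}-2$) and $\|(\delta U_n,\delta\Psi_n)\|_{W^{1,\infty}}\le C\delta\theta_n^{3-\alpha}\Delta_n$ via Sobolev embedding.

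Multiplying and taking the maximum of the resulting exponents of $\theta_n$ produces precisely the two candidates in the definition of $L_2(s)$: the first term, in which $H^s$-derivatives fall on $V_n,W_n$ together with the coefficient loss, yields the exponent $(s+1-\alpha)_++6-2\alpha-1$, while the other two distributions produce $s+5-2\alpha-1$; hence the factor $\theta_n^{L_2(s)-1}\Delta_n$. The requirement $s\ge 6$ ensures $s+1\ge 7$ so that the trace theorem and all Sobolev embeddings of $W^{1,\infty}$ into $H^4$ stay within the range where $(H_{n-1})$ applies. The boundary estimate for $\tilde{e}''_n$ is proved in the same way, applying the analogous Moser inequality on $\partial\Omega_T$ to the bilinear expansion of $\mathbb{B}'$, using the trace theorem to pass from interior norms to boundary norms; since $\mathbb{B}'$ has the same structure in $(U,\Psi,\varphi)$ as $\mathbb{L}'$ has in $(U,\Psi)$, the bookkeeping of $\theta_n$ powers is identical.

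The main obstacle is the exact bookkeeping of derivative losses: one must track precisely where each of the $s$ tangential derivatives can land in the Moser expansion of the trilinear expression, in order to verify that the worst placement produces exactly the exponent $L_2(s)-1$ and not a strictly larger one, because any looseness here would spoil the subsequent convergence argument for the Nash-Moser scheme. The rest of the proof is a careful but routine translation of the argument of \cite{Tcpam,CS2}, adapted to the extra unknowns $F_1,F_2,F_3$ which enter $\mathbb{L}$ and $\mathbb{L}'$ in exactly the same algebraic manner as the velocity and therefore do not introduce new derivative losses.
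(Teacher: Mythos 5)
Your overall strategy is the standard one (and the one the paper itself delegates to the cited source: the statement is not proved here but referred to lemmata 4.8--4.9 of \cite{Tcpam}): write $e''_k$ as the integral Taylor remainder $\int_0^1\mathbb{L}''$ applied to $\bigl((I-S_{\theta_k})U_k,(I-S_{\theta_k})\Psi_k\bigr)$ and $(\delta U_k,\delta\Psi_k)$, then combine Moser-type estimates with \eqref{82g}, point a) of $(H_{k})$, and the bounds on the approximate solution. However, there is a genuine gap exactly at the point you yourself flag as ``the main obstacle'': the power counting. With the trilinear estimate you display, namely a coefficient measured in $H^{s+1}$ (hence the intermediate state in $H^{s+2}$, giving $\theta_k^{(s+2-\alpha)_+}$) multiplied by \emph{both} increments in $W^{1,\infty}$ (costing $\theta_k^{4-\alpha}$ and $\theta_k^{3-\alpha}\Delta_k$, since on the four-dimensional domain $\Omega_T$ one needs $H^4\hookrightarrow W^{1,\infty}$), the third term yields the exponent $(s+2-\alpha)_++7-2\alpha$, not the asserted $(s+1-\alpha)_++6-2\alpha-1=(s+1-\alpha)_++5-2\alpha$. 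Your conclusion that the computation ``produces precisely the two candidates in the definition of $L_2(s)$'' therefore does not follow from the estimate you wrote down; it is asserted, and the discrepancy is not harmless, since the explicit form of $L_2(s)$ is the entire content of the lemma and feeds directly into the later requirements on $\alpha$ (e.g.\ $L(\alpha+3)\le 1$) that fix the regularity thresholds of Theorems \ref{t01}--\ref{t02}.

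To actually reach $L_2(s)$ one must exploit the structure of $\mathbb{L}''$ rather than a blanket tame bound: (i) in the terms where a first derivative of the intermediate state appears (so that the coefficient carries the high norm), the $U$-increments enter \emph{undifferentiated}, so their low norms are only $L^\infty$-type, i.e.\ $H^3(\Omega_T)$, costing $\theta_k^{3-\alpha}$ and $\theta_k^{2-\alpha}\Delta_k$; (ii) the front increments always enter through $\Psi=\chi(x_1)\varphi$, so even their first derivatives in $L^\infty$ are controlled by $H^3(\partial\Omega_T)$ norms of $\varphi$ on the three-dimensional boundary, again costing one power of $\theta_k$ less than your $W^{1,\infty}(\Omega_T)$ bookkeeping; and (iii) when the coefficient carries the high norm it should be measured in $H^s$ (state in $H^{s+1}$, i.e.\ $\theta_k^{(s+1-\alpha)_+}$), not in $H^{s+1}$. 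These refinements are precisely what produce $(s+1-\alpha)_++5-2\alpha$ and $s+4-2\alpha$ after multiplying by $\Delta_k$, and without them your argument proves the lemma only with a strictly larger $L_2$, which would force a re-derivation of the convergence conditions of the Nash--Moser scheme with a larger $\alpha$. The boundary estimate for $\tilde e''_k$ is indeed easier (the relevant part of $\mathbb{B}$ is quadratic), but it too needs the trace/lifting bookkeeping made explicit rather than ``identical by analogy''.
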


\subsection{Construction and estimate of the modified state}

Since the approximate solution satisfies the hyperbolicity conditions \eqref{11} and the non-collinearity condition \eqref{NC} on $[0,T]\times\partial\Omega$ (see Lemma \ref{l2}) and since we shall require that the smooth modified state vanishes in the past, the state $(U^a+U_{n+1/2},\varphi^a+\varphi_{n+1/2})$ will satisfy \eqref{29} and \eqref{40}  for a sufficiently short time $T>0$. Therefore, while constructing the modified state we may focus only on constraints \eqref{30} and  \eqref{31}.

\begin{proposition}
Let $\alpha \geq 4$. The exist some functions $U_{n+1/2}$ and $\varphi_{n+1/2}$, that vanish in the past, and such that
$(U^a+U_{n+1/2},\varphi^a+\varphi_{n+1/2})$ satisfies \eqref{29}--\eqref{31} and \eqref{40} for a sufficiently short time $T$. Moreover, these functions satisfy
\begin{equation}
\varphi_{n+1/2}=S_{\theta_n}\varphi_n, \quad p_{n+1/2}=S_{\theta_n}p_n,\quad v_{j,n+1/2}=S_{\theta_n}v_{j,n}\ (j=2,3),
 \label{93^}
\end{equation}
and
\begin{equation}
\|U_{n+1/2} - S_{\theta_n}U_n\|_{H^s(\Omega_T)}\leq C\delta\theta_n^{s+2-\alpha}\quad \mbox{for}\ s\in [3,\tilde{\alpha}+2]
\label{94^}
\end{equation}
for sufficiently small $\delta>0$ and $T>0$, and a sufficiently large $\theta_0\geq 1$.
\label{p3}
\end{proposition}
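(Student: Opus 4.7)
The plan is to define $(U_{n+1/2},\varphi_{n+1/2})$ component by component, starting from the smoothed iterate $S_{\theta_n}(U_n,\varphi_n)$ and then correcting only those components that enter the constraints \eqref{30}--\eqref{31}, so that the ``large'' part of $U_{n+1/2}$ coincides with $S_{\theta_n}U_n$ and the corrections are controlled by the quadratic-type errors measuring how far $S_{\theta_n}U_n$ is from satisfying the constraints. Concretely, I would set $\varphi_{n+1/2}:=S_{\theta_n}\varphi_n$, $p_{n+1/2}:=S_{\theta_n}p_n$ and $v_{j,n+1/2}:=S_{\theta_n}v_{j,n}$ for $j=2,3$ as forced by \eqref{93^}, and then modify only $v_{1,n+1/2}$ and the columns $F_{j,n+1/2}$, adjusting them in $\Omega_T$ via a suitable lifting from $\partial\Omega_T$. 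Since $(U^a,\varphi^a)$ already satisfies \eqref{29}--\eqref{31} and \eqref{40} by Lemma \ref{l2}, the hyperbolicity \eqref{29} and non-collinearity \eqref{40} for the combined state follow automatically for small $\delta>0$ and short $T>0$, so all the work lies in enforcing the linear constraints on the boundary.

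I would first determine $v_{1,n+1/2}|_{x_1=0}$ by demanding the first boundary condition in \eqref{30} for $\hat{\varphi}=\varphi^a+\varphi_{n+1/2}$ and $\hat{v}=v^a+v_{n+1/2}$: this gives an explicit algebraic formula for the trace of $v_{1,n+1/2}$ in terms of already-fixed quantities, and I lift this trace into $\Omega_T$ through the standard extension operator. Next, for each $j$ the two scalar quantities $F_{2j,n+1/2}|_{x_1=0}$ and $F_{3j,n+1/2}|_{x_1=0}$ are defined as the unique solutions on $\partial\Omega_T$ of the (closed) linear $2\times 2$ transport system that one obtains by writing \eqref{31} with $\hat{v},\widehat{F}$ replaced by the combined state and subtracting the contribution of $U^a$; with zero initial data (past-vanishing) this Cauchy problem is well-posed and gives functions in $H^s(\partial\Omega_T)$ with tame estimates by a standard energy argument, because $v_{2,n+1/2}=S_{\theta_n}v_{2,n}$, $v_{3,n+1/2}=S_{\theta_n}v_{3,n}$ are smooth and the coefficients are bounded by \eqref{80g}--\eqref{84g}. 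Then $F_{1j,n+1/2}|_{x_1=0}$ is determined algebraically from the boundary constraint $\widehat{F}_N^j=0$, and finally each column $F_{j,n+1/2}$ is defined in $\Omega_T$ by lifting its boundary trace.

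For the tame bound \eqref{94^}, I would compare each modified component with the corresponding component of $S_{\theta_n}U_n$. For $v_1$, the difference at $x_1=0$ equals the residual of the first boundary condition in \eqref{30} evaluated on $(U^a+S_{\theta_n}U_n,\varphi^a+S_{\theta_n}\varphi_n)$, and the latter is estimated in $H^s$ by splitting it as
\[
\bigl[\mathbb{B}(U^a+S_{\theta_n}U_n,\varphi^a+S_{\theta_n}\varphi_n)-S_{\theta_n}\mathbb{B}(U^a+U_n,\varphi^a+\varphi_n)\bigr]+S_{\theta_n}\mathcal{B}(U_n|_{x_1=0},\varphi_n);
\]
the first bracket is a first-substitution-type error bounded via Proposition \ref{p1a}, Moser inequalities \eqref{c40}--\eqref{c41}, and \eqref{80g}--\eqref{84g} analogously to Lemma \ref{l5}, while the second is controlled by inductive hypothesis (H$_{n-1}$)c) and property \eqref{72^} of $S_{\theta_n}$. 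For $F_{2j}$ and $F_{3j}$ on the boundary, the difference $F_{kj,n+1/2}-S_{\theta_n}F_{kj,n}$ satisfies the same $2\times 2$ transport system with right-hand side equal to the residual of \eqref{31} evaluated on $(U^a+S_{\theta_n}U_n,\varphi^a+S_{\theta_n}\varphi_n)$, plus commutator terms $[\text{coeff},S_{\theta_n}]$; both pieces are estimated by the same principle. An energy estimate for this transport system then yields $H^s(\partial\Omega_T)$ bounds of order $\delta\theta_n^{s+3/2-\alpha}$ which, together with the trace theorem and the lifting, give \eqref{94^} with the stated exponent $s+2-\alpha$.

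The main obstacle I foresee is the $F$-sector: unlike the scalar correction for $v_1$, the quantities $F_{2j}|_{x_1=0}$ and $F_{3j}|_{x_1=0}$ are governed by the coupled transport system \eqref{31}, so I must carry out a tame energy estimate for this system in which the coefficients are the (already smoothed) components of the modified state and the approximate solution. The delicate point is to show that the resulting estimate depends on $U_n$ only through its \emph{first} few Sobolev norms (so that one can use \eqref{81^}) and on the higher norms only through the quadratic/substitution residuals, so that the final bound has the form \eqref{94^} with a growth exponent $s+2-\alpha$ rather than the naive $s+1-\alpha$ coming from a single derivative loss. This is achieved by the same commutator/Moser bookkeeping that produced Lemmas \ref{l4}--\ref{l5}, combined with the fact that \eqref{31} is a first-order transport equation in the tangential variables on $\partial\Omega_T$ and therefore does not cost any normal derivative when $F_{N,n+1/2}^j$ is subsequently recovered from the algebraic constraint. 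Choosing $T$ small compensates any unavoidable growth in $\theta_n$, completing the construction.
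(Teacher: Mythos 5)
Your proposal is correct and follows essentially the same route as the paper: keep $\varphi$, $p$, $v_2$, $v_3$ as the smoothed iterates, correct $v_1$ through the residual of the first condition in \eqref{30} lifted from the boundary, obtain the tangential components $F_{2j},F_{3j}$ on $\partial\Omega_T$ by solving the $2\times2$ tangential transport system coming from \eqref{31} (using that the approximate solution satisfies it exactly), recover $F_{1j}$ algebraically from $\widehat{F}^j_N=0$, and lift, with the estimates obtained from substitution-type errors, the inductive hypothesis, the smoothing properties, and the trace theorem. The only differences are cosmetic bookkeeping (e.g.\ your intermediate exponent $s+3/2-\alpha$ versus the paper's integer-index chain $\|(1-S_{\theta_n})F_{kj,n}\|_{H^{s+1}(\partial\Omega_T)}\leq C\|(1-S_{\theta_n})F_{kj,n}\|_{H^{s+2}(\Omega_T)}\leq C\delta\theta_n^{s+2-\alpha}$), and both yield \eqref{94^} with the exponent $s+2-\alpha$.
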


\begin{proof}
Note that estimate \eqref{94^} can be proved for every $s\geq 3$ but below we will need it only for $s\in [3,\tilde{\alpha}+2]$.
Let $\varphi_{n+1/2}$, $p_{n+1/2}$  and $v_{j,n+1/2}$ ($j=2,3$) be defined by \eqref{93^}. We define $v_{1,n+1/2}$ as in \cite{Tcpam}:
\[
v_{1,n+1/2}:= S_{\theta_n}v_{1,n} +{\cal R}_T{\cal G},
\]
where
\[
\begin{split}
{\cal G}= & \partial_t \varphi_{n+1/2} -(S_{\theta_n}v_{1,n})|_{x_1=0}\\
 & +\sum_{k=2}^3\bigl(
(v_k^{a}+v_{k,n+1/2} )\partial_k\varphi_{n+1/2} + v_{k,n+1/2}\partial_k\varphi^a\bigr)\bigr|_{x_1=0}
\end{split}
\]
and ${\cal R}_T:\; H^s(\partial\Omega_T)\longrightarrow H^{s+1/2}(\Omega_T)$ is the lifting operator from the boundary to the interior. Since we mainly prefer to work with integer indices of Sobolev spaces (see Remark \ref{r4}), instead of ${\cal R}_T$ we could even write the function $\chi =\chi (x_1)$ being the same $C_0^{\infty}$ function which was used in \eqref{4}. Clearly, the above definition of $v_{1,n+1/2}$ implies the first boundary condition in \eqref{30} written for $(U^a+U_{n+1/2},\varphi^a+\varphi_{n+1/2})$. Referring to \cite{Tcpam} for detailed arguments and technical calculations, we have the estimate
\[
\|v_{1,n+1/2} - S_{\theta_n}v_{1,n}\|_{H^s(\Omega_T)} \leq C\delta\theta_n^{s+1-\alpha}.
\]

We now define
\[
F_{kj,n+1/2}:= S_{\theta_n}F_{kj,n} +\chi \left(b_{kj,n} -(S_{\theta_n}F_{kj,n})|_{x_1=0}\right)\quad (k=2,3),
\]
where for given $v_{n+1/2}$ the functions $b_{kj,n}$ satisfy the linear systems
\[
\mathcal{K}(v^a+v_{n+1/2})(F_{{\rm tan}_j}^a+b_{j,n})=0 \quad \mbox{on}\ \partial\Omega_T
\]
with the initial data $\left.\left(b_{kj,n} -(S_{\theta_n}F_{kj,n})|_{x_1=0}\right)\right|_{t=0}=0$. Here the linear diffe\-rential operator
\[
\begin{split}
\mathcal{K}(v^a+v_{n+1/2}):= & I_2\partial_t +\sum_{k=2}^3(v_k^a+v_{k,n+1/2})I_2\partial_k \\ & -
\begin{pmatrix}
\partial_2(v_2^a+v_{2,n+1/2}) & \partial_3(v_2^a+v_{2,n+1/2}) \\
\partial_2(v_3^a+v_{3,n+1/2}) & \partial_3(v_3^a+v_{3,n+1/2})
\end{pmatrix}\cdot \,,
\end{split}
\]
and $b_{j,n}=(b_{2j,n},b_{3j,n})$ and $F_{{\rm tan}_j}^a =(F_{2j}^a,F_{3j}^a)$. We can also rewrite the linear systems above by taking into account that the approximate state satisfies equations \eqref{31}, i.e.,
\[
\mathcal{K}(v^a)F_{{\rm tan}_j}^a=0 \quad \mbox{on}\ \partial\Omega_T.
\]
After that we define the first components of the vectors $F_{j,n+1/2}$ as
\[
F_{1j,n+1/2}:= S_{\theta_n}F_{1j,n} +\chi {\cal G}_j,
\]
where
\[
\begin{split}
{\cal G}_j= & -(S_{\theta_n}F_{1j,n})|_{x_1=0} \\ &
 +\sum_{k=2}^3\bigl(
(F_{kj}^{a}+F_{kj,n+1/2} )\partial_k\varphi_{n+1/2} + F_{kj,n+1/2}\partial_k\varphi^a\bigr)\bigr|_{x_1=0}.
\end{split}
\]
Clearly, the components of the vectors $F_{j,n+1/2}$ defined above $(U^a+U_{n+1/2},\varphi^a+\varphi_{n+1/2})$ satisfy \eqref{30} and \eqref{31}.

Omitting technical calculations, we only note that for estimating $b_{kj,n} -(S_{\theta_n}F_{kj,n})|_{x_1=0}$ we have to estimate the norms
$
\|(1-S_{\theta_n})F_{kj,n}\|_{H^{s+1}(\partial\Omega_T)}.
$
That is, we have to use the trace theorem whose application together with inequa\-lity \eqref{82g} gives
\[
\|(1-S_{\theta_n})F_{kj,n}\|_{H^{s+1}(\partial\Omega_T)}\leq C\|(1-S_{\theta_n})F_{kj,n}\|_{H^{s+2}(\Omega_T)}\leq C\delta\theta_n^{s+2-\alpha }.
\]
Omitting details, we  get the estimates
\[
\|F_{kj,n+1/2}- S_{\theta_n}F_{kj,n}\|_{H^{s}(\Omega_T)}\leq C\delta\theta_n^{s+2-\alpha } \quad (k=2,3).
\]
After that the estimates for $F_{1j,n+1/2}$ are obtained similarly to the estimates for $v_{1,n+1/2}$. Roughening the estimates for $v_{1,n+1/2}$, we finally get the desired estimate \eqref{94^}. The arguments towards the proof of estimate \eqref{94^} are really similar to those in \cite{ST,T09} in spite of the fact that our process of the definition of the modified state differs a little from that in \cite{ST,T09} (see also Remark \ref{r_ms} below).
\end{proof}

\begin{remark}{\rm
Our assumption \eqref{31} for the basic state requiring that the linear equations for $F_{2j}$ and $F_{3j}$ (for a given ${v}$) contained in \eqref{11.1} hold only on the boundary $x_1=0$ but not in the interior of the domain does not give any advantage in comparison with the assumption for the magnetic field in \cite{ST,T09}, where the corresponding linear equation for the magnetic field was assumed to be satisfied in the interior. Indeed, as in \cite{ST,T09}, in the right-hand side of estimate \eqref{94^} we have the multiplier $\theta_n^{s+2-\alpha }$ but not $\theta_n^{s+1-\alpha }$. That is, alternatively we could assume that the basic state satisfies the linear equations for $F_{j}$  in the interior and just refer to \cite{ST,T09} where the process of getting estimates like \eqref{94^} is described in more details.
\label{r_ms}
}\end{remark}

\subsection{Estimate of the second substitution errors and the last error term}

Referring again for a detailed proof to \cite{Tcpam} (see there lemmata 4.11 and 4.12), here we just formulate the following results.

\begin{lemma}
Let $\alpha \geq 5$. There exist $\delta >0$, $T>0$ sufficiently small, and $\theta_0 \geq 1$ sufficiently large, such that for all $k=0,\ldots n-1$, and for all integer $s\in [3,\widetilde{\alpha}-1]$, one has
\begin{align}
& \|{e}'''_k\|_{H^s(\Omega_T)}\leq C\delta^2\theta_k^{L_3(s)-1}\Delta_k,\nonumber \\
& \|\tilde{e}'''_k\|_{H^s(\partial\Omega_T)}\leq C\delta^2\theta_k^{L_3(s)-1}\Delta_k,\nonumber
\end{align}
where $L_3(s)=\max \{ (s+1-\alpha )_+ +10-2\alpha ,s+6-2\alpha \}$.
\label{l6}
\end{lemma}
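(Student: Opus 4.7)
The plan is to identify $e'''_k$ and $\tilde e'''_k$ as the error caused by substituting, in the coefficients of the linearized operators, the modified state $(U^a+U_{k+1/2},\Psi^a+\Psi_{k+1/2})$ for the smoothed state $(U^a+S_{\theta_k}U_k,\Psi^a+S_{\theta_k}\Psi_k)$. Since $\Psi_{k+1/2}=S_{\theta_k}\Psi_k$ by \eqref{93^}, the whole discrepancy is carried by
\[
W_k:=U_{k+1/2}-S_{\theta_k}U_k,\qquad \|W_k\|_{H^\sigma(\Omega_T)}\leq C\delta\,\theta_k^{\sigma+2-\alpha},\quad \sigma\in[3,\tilde\alpha+2],
\]
where the bound is \eqref{94^}.

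First I would write, by the fundamental theorem of calculus,
\[
e'''_k=\int_0^1\!\bigl(W_k\cdot\nabla_y\mathbb{L}'\bigr)\bigl(U^a+S_{\theta_k}U_k+\tau W_k,\,\Psi^a+S_{\theta_k}\Psi_k\bigr)(\delta U_k,\delta\Psi_k)\,d\tau,
\]
with an analogous formula for $\tilde e'''_k$ built from $\mathbb{B}'$. Inspecting \eqref{L'e}--\eqref{B'e}, the integrand is a finite sum of products of the schematic forms $\widehat{\mathfrak b}\,W_k\,\partial^\beta(\delta U_k,\delta\Psi_k)$ with $|\beta|\leq 1$ and $\widehat{\mathfrak b}\,W_k\,\partial(U^a+S_{\theta_k}U_k)\,(\delta U_k,\delta\Psi_k)$, the second family arising from the zeroth-order part $\mathcal{C}$; here $\widehat{\mathfrak b}$ denotes a generic smooth function of the interpolated state, bounded in $W^{1,\infty}$ and, in $H^s$, controlled by $\|\widehat U\|_{H^{s+1}}+\|\hat\varphi\|_{H^{s+1}}$.

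Next I would apply the Moser-type calculus inequalities \eqref{c40}--\eqref{c41} and their refinement \eqref{c40'} to each such product. Using \eqref{94^} together with $\alpha\geq 5$ and Sobolev embedding, one has $\|W_k\|_{W^{1,\infty}(\Omega_T)}\leq C\|W_k\|_{H^4(\Omega_T)}\leq C\delta\theta_k^{6-\alpha}$; using $(H_{n-1})$a and Lemma~\ref{l3} one has $\|\delta U_k,\delta\Psi_k\|_{H^\sigma}\leq\delta\theta_k^{\sigma-\alpha-1}\Delta_k$ and $\|\delta U_k,\delta\Psi_k\|_{W^{1,\infty}}\leq C\delta\theta_k^{3-\alpha}\Delta_k$. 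Multiplying and maximising over the two alternatives in the Moser bound---the balanced one, where all factors use low-regularity norms, and the one in which the extra $(s+1-\alpha)_+$ derivatives fall on the coefficient matrices---produces precisely $\delta^2\theta_k^{L_3(s)-1}\Delta_k$. The boundary estimate for $\tilde e'''_k$ is obtained by the same argument on $\partial\Omega_T$, with a trace inequality $\|u\|_{H^s(\partial\Omega_T)}\lesssim\|u\|_{H^{s+1/2}(\Omega_T)}$ converting interior bounds into boundary ones; this is comfortably within the range of validity of \eqref{94^} since $s\leq\tilde\alpha-1$.

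The main obstacle is the two-derivative discrepancy recorded in \eqref{94^}: unlike for a purely smoothed quantity (for which \eqref{82g} gives the much better $\theta_k^{\sigma-\alpha}$), the modified state loses two derivatives, the cost of the trace-lifting step and the extra terms $\chi\mathcal{G}_j$ built into $U_{n+1/2}$ to enforce the boundary constraints \eqref{30}--\eqref{31} (see Proposition~\ref{p3}). Tracking this loss carefully through the Moser estimates is what forces the larger offsets $10-2\alpha$ and $s+6-2\alpha$ in $L_3$ (compared with $6-2\alpha$ and $s+5-2\alpha$ in $L_2$ from Lemma~\ref{l5}), and is the reason why Lemma~\ref{l6} requires $\alpha\geq 5$ whereas Lemmata~\ref{l4}--\ref{l5} needed only $\alpha\geq 4$.
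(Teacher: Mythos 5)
Your proposal is correct and follows essentially the route the paper intends: the paper proves Lemma \ref{l6} by referring to lemmata 4.11--4.12 of \cite{Tcpam} and noting that the only change is the extra power of $\theta_n$ in estimate \eqref{94^} (the multiplier $\theta_n^{s+2-\alpha}$ instead of $\theta_n^{s+1-\alpha}$), and your argument is exactly that cited argument --- write $e'''_k$, $\tilde e'''_k$ as the coefficient-substitution error in the direction $W_k=U_{k+1/2}-S_{\theta_k}U_k$ (with $\Psi_{k+1/2}=S_{\theta_k}\Psi_k$ by \eqref{93^}), and estimate via the Moser-type inequalities using \eqref{94^}, $(H_{n-1})$ and Lemma \ref{l3}. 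You also correctly identify the two-derivative loss in \eqref{94^} as the source of the larger offsets in $L_3(s)$ compared with $L_2(s)$, which is precisely the modification the paper points out.
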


\begin{lemma}
Let $\alpha \geq 5$. There exist $\delta >0$, $T>0$ sufficiently small, and $\theta_0 \geq 1$ sufficiently large, such that for all $k=0,\ldots n-1$, and for all integer $s\in [3,\widetilde{\alpha}-2]$, one has
\[
\|{D}_{k+1/2}\delta{\Psi}_k\|_{H^s(\Omega_T)}\leq C\delta^2\theta_k^{L(s)-1}\Delta_k,
\]
where $L(s)=\max \{ (s+2-\alpha )_+ +10-2\alpha ,(s+1-\alpha)_++11-2\alpha ,s+7-2\alpha \}$.
\label{l7}
\end{lemma}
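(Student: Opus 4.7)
The plan is to mimic the structure used for the second substitution error in Lemma 5.6 while accounting for the extra factor $\delta\Psi_n$ that multiplies the differential operator $\partial_1/\partial_1(\Phi^a+\Psi_{n+1/2})$. I would first use the definitions of $\mathcal{L}$ and $f^a$ to rewrite
\[
\mathbb{L}(U^a+U_{n+1/2},\Psi^a+\Psi_{n+1/2}) = \mathcal{L}(U_{n+1/2},\Psi_{n+1/2}) - f^a,
\]
and then split
\[
\begin{split}
\mathcal{L}(U_{n+1/2},\Psi_{n+1/2}) = \;& \bigl[\mathcal{L}(U_{n+1/2},\Psi_{n+1/2}) - \mathcal{L}(S_{\theta_n}U_n,S_{\theta_n}\Psi_n)\bigr]\\
& + \bigl[\mathcal{L}(S_{\theta_n}U_n,S_{\theta_n}\Psi_n) - \mathcal{L}(U_n,\Psi_n)\bigr] + \mathcal{L}(U_n,\Psi_n).
\end{split}
\]
The first bracket is bounded by a first-order Taylor expansion of $\mathcal{L}$ about the modified state combined with estimate \eqref{94^}; the second bracket by the same Taylor argument combined with the smoothing estimate \eqref{82g}; and the third bracket is rewritten as $[\mathcal{L}(U_n,\Psi_n)-f^a]+f^a$, which is controlled by the inductive hypothesis $(H_{n-1})$b and by the smallness $\|f^a\|_{H^{m+7}(\Omega_T)}\leq\delta_0(T)$ coming from \eqref{68^}.

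Next I would apply the operator $D_{n+1/2}$. Since $\|\varphi^a+\varphi_{n+1/2}\|_{L^\infty}$ stays small for short $T$, one has $\partial_1(\Phi^a+\Psi_{n+1/2})\geq 1/2$, and the Moser composition inequality \eqref{c41} together with \eqref{80g} (applied to $\varphi^a+\varphi_{n+1/2}$) lets one bound the coefficient $1/\partial_1(\Phi^a+\Psi_{n+1/2})$ tamely in every $H^s$ norm. Composing with $\partial_1$ then costs exactly one derivative, so the previous step upgrades to a tame $H^{s-1}$ bound for $D_{n+1/2}$ applied to anything. Multiplying by $\delta\Psi_n$ is handled through the Moser product inequality \eqref{c40}, using the high-norm bound from $(H_{n-1})$a
\[
\|\delta\varphi_n\|_{H^s(\partial\Omega_T)}\leq \delta\theta_n^{s-\alpha-1}\Delta_n,
\]
and the Sobolev-type low-norm bound $\|\delta\varphi_n\|_{L^\infty}\leq C\|\delta\varphi_n\|_{H^3}\leq C\delta\theta_n^{2-\alpha}\Delta_n$, likewise from $(H_{n-1})$a with $s=3$.

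The three summands inside the maximum defining $L(s)$ correspond to the three ways of distributing derivatives in the product $D_{n+1/2}(\cdot)\cdot\delta\Psi_n$: putting the high norm on the $\mathbb{L}$-factor (which carries an additional $\theta_n^{s-\alpha-1}$ factor, plus the $\theta_n^{+2}$ shift from \eqref{94^}) and the low norm on $\delta\Psi_n$ gives the term $(s+2-\alpha)_+ +10-2\alpha$; putting the high norm on the coefficient $1/\partial_1(\Phi^a+\Psi_{n+1/2})$ yields the intermediate term $(s+1-\alpha)_+ +11-2\alpha$; and the balanced distribution through the low-norm bound of $\mathbb{L}$ (which is of order $\theta_n^{10-2\alpha}$ once one uses $(H_{n-1})$b at $s=3$) together with the high norm of $\delta\Psi_n$ produces the third term $s+7-2\alpha$. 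Summing the three contributions and multiplying by $\Delta_n$ yields the claim.

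The main obstacle will be the precise bookkeeping of indices, in particular verifying that the worst of these three contributions indeed realizes $L(s)-1$ with the correct shift coming from \eqref{94^} (which loses two extra derivatives compared with \eqref{82g}) and that $s\in[3,\tilde\alpha-2]$ is the correct admissible range for all three pieces to be simultaneously applicable; once the decomposition is chosen as above, the remaining work is bookkeeping analogous to, but slightly more delicate than, that of Lemma 5.6.
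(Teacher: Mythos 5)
Your route is essentially the paper's: for this lemma the paper gives no self-contained argument but refers to \cite{Tcpam} (Lemmas 4.11--4.12 there), and the proof being invoked is exactly your plan --- write $\mathbb{L}(U^a+U_{k+1/2},\Psi^a+\Psi_{k+1/2})=\mathcal{L}(U_{k+1/2},\Psi_{k+1/2})-f^a$, telescope through $(S_{\theta_k}U_k,S_{\theta_k}\Psi_k)$ and $(U_k,\Psi_k)$, estimate the pieces by \eqref{94^}, \eqref{82g} and $(H_{n-1})$b, then apply $\partial_1$, control $1/\partial_1(\Phi^a+\Psi_{k+1/2})$ by Moser composition (using $\partial_1(\Phi^a+\Psi_{k+1/2})\geq 1/2$ and \eqref{83g}), and multiply by $\delta\Psi_k$ via the product rule and $(H_{n-1})$a; the exponents in $L(s)$ come from redoing the bookkeeping of \cite{Tcpam} with the weaker modified-state bound \eqref{94^}.

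One step, as written, would fail: your treatment of $f^a$. After your first identity, the external $-f^a$ cancels exactly the $+f^a$ you reintroduce in the third bracket, so the third piece must be kept as the single block $\mathcal{L}(U_k,\Psi_k)-f^a$ and estimated only through $(H_{n-1})$b. If instead a leftover $f^a$ is bounded separately by $\delta_0(T)$ from \eqref{68^}, its contribution to $\|D_{k+1/2}\delta\Psi_k\|_{H^s}$ is of size $\delta\,\delta_0(T)\,\theta_k^{2-\alpha}\Delta_k$ (low norm of $\delta\Psi_k$ times $\|f^a\|_{H^{s+1}}$), and this is \emph{not} dominated by $C\delta^2\theta_k^{L(s)-1}\Delta_k$: at $s=3$ one needs $\theta_k^{10-2\alpha}$, while $\theta_k^{2-\alpha}/\theta_k^{10-2\alpha}=\theta_k^{\alpha-8}\rightarrow\infty$ for the values $\alpha\geq 9$ actually used later, and $\delta_0(T)$ carries no decay in $k$. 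So the cancellation is not optional. Relatedly, your parenthetical numerology for the third term is off: the low norm of $\partial_1\mathbb{L}(U^a+U_{k+1/2},\cdot)$ is not $\theta_k^{10-2\alpha}$ and does not come from $(H_{n-1})$b at $s=3$; it is dominated by the modified-state piece, roughly $\|\mathbb{L}(\cdot)\|_{H^4}\lesssim\delta\theta_k^{7-\alpha}$ via \eqref{94^} (one derivative from $\partial_1$ plus the $+2$ loss), and pairing this with $\|\delta\Psi_k\|_{H^s}\lesssim\delta\theta_k^{s-\alpha-1}\Delta_k$ is what produces the exponent $s+7-2\alpha$ in $L(s)$. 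Finally, the range question you flag is genuine: since $\partial_1$ costs one derivative, the block $\mathcal{L}(U_k,\Psi_k)-f^a$ is needed in $H^{s+1}$ with $s+1$ up to $\tilde{\alpha}-1$, one unit beyond the stated range of $(H_{n-1})$b, so the top index $s=\tilde{\alpha}-2$ has to be handled by a different distribution of norms (direct tame estimates using \eqref{80g}--\eqref{84g}), which is part of the bookkeeping the paper delegates to \cite{Tcpam}.
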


The functions $L_3(s)$ and $L(s)$ differ from those in \cite{Tcpam} because in the right-hand side of estimate \eqref{94^} we have the multiplier $\theta_n^{s+2-\alpha }$ but not $\theta_n^{s+1-\alpha }$ as in \cite{Tcpam}. Going inside technical calculations in \cite{Tcpam} (see also \cite{CS2}), we can easily modify them by using estimate \eqref{94^} and get the formulae for $L_3(s)$ and $L(s)$.

\subsection{Convergence of the iteration scheme}

Lemmata \ref{l4}--\ref{l7} yield the estimate of ${e}_n$ and $\tilde{e}_n$ defined in \eqref{77^} as the sum of all the errors of the $n$th step.

\begin{lemma}
Let $\alpha \geq 5$. There exist $\delta >0$, $T>0$ sufficiently small, and $\theta_0 \geq 1$ sufficiently large, such that
for all $k=0,\ldots n-1$, and for all integer $s\in [3,\widetilde{\alpha}-2]$, one has
\begin{equation}
\|{e}_k\|_{H^s(\Omega_T)}+\|\tilde{e}_k\|_{H^s(\partial\Omega_T)}\leq C\delta^2\theta_k^{L(s)-1}\Delta_k,
\label{103}
\end{equation}
where $L(s)$ is defined in Lemma \ref{l7}.
\label{l8}
\end{lemma}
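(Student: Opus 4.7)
The plan is to combine the four component estimates from Lemmas \ref{l4}--\ref{l7} via the triangle inequality applied to the definitions \eqref{77^}:
\[
e_k = e'_k + e''_k + e'''_k + D_{k+1/2}\delta\Psi_k, \qquad \tilde e_k = \tilde e'_k + \tilde e''_k + \tilde e'''_k.
\]
The work is then purely bookkeeping on the four exponents $L_1(s)$, $L_2(s)$, $L_3(s)$, $L(s)$.

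First I would invoke Lemma \ref{l4} (for $e'_k$, $\tilde e'_k$), Lemma \ref{l5} (for $e''_k$, $\tilde e''_k$), Lemma \ref{l6} (for $e'''_k$, $\tilde e'''_k$) and Lemma \ref{l7} (for $D_{k+1/2}\delta\Psi_k$) under the common hypothesis $\alpha\ge 5$ and for $s$ in the common range $[3,\widetilde\alpha-2]\cap\mathbb{N}$; each lemma gives a bound of the shape $C\delta^2\theta_k^{L_i(s)-1}\Delta_k$. Since all four contributions share the same factor $\delta^2\Delta_k$, it suffices to show
\[
L_1(s)\le L(s),\quad L_2(s)\le L(s),\quad L_3(s)\le L(s).
\]
This is immediate from a termwise comparison inside the $\max$'s: the first term of $L_3(s)$, namely $(s+1-\alpha)_+ + 10-2\alpha$, is strictly dominated by the second term of $L(s)$, namely $(s+1-\alpha)_+ + 11-2\alpha$; the second term of $L_3(s)$, namely $s+6-2\alpha$, is strictly dominated by the third term of $L(s)$, namely $s+7-2\alpha$. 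A fortiori $L_2(s)\le L_3(s)\le L(s)$ and $L_1(s)\le L_2(s)\le L(s)$, because $L_1$ and $L_2$ have the same structure as $L_3$ with even smaller additive constants.

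Thus every summand is bounded by $C\delta^2\theta_k^{L(s)-1}\Delta_k$, and adding the (finite) number of such bounds (with a constant $C$ possibly enlarged by a factor depending only on $\alpha$) yields \eqref{103}. The smallness of $\delta$ and $T$ and the largeness of $\theta_0$ needed to apply each of Lemmas \ref{l4}--\ref{l7} simultaneously are obtained by taking the smallest $\delta$, smallest $T$ and largest $\theta_0$ prescribed among the four lemmata.

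The only step requiring any attention at all is the exponent comparison, and there is no genuine obstacle: the function $L(s)$ from Lemma \ref{l7} was designed precisely to absorb the contribution of $D_{k+1/2}\delta\Psi_k$, which arises from the extra loss of one derivative in the modified-state estimate \eqref{94^} (the factor $\theta_n^{s+2-\alpha}$ instead of $\theta_n^{s+1-\alpha}$) and is therefore the dominant contribution; the other three error types are controlled by strictly smaller exponents. Hence no additional ingredient beyond the four preceding lemmata and the triangle inequality is required.
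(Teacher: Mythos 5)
Your proposal is correct and coincides with the paper's (implicit) argument: the paper offers no separate proof of this lemma, simply stating that Lemmata \ref{l4}--\ref{l7} yield it, which is exactly your triangle-inequality-plus-exponent-comparison $L_1(s)\le L_2(s)\le L_3(s)\le L(s)$ (valid since $\theta_k\ge\theta_0\ge 1$). The only cosmetic wrinkle, shared with the paper itself, is that Lemma \ref{l5} is stated for $s\in[6,\widetilde\alpha-2]$ rather than $[3,\widetilde\alpha-2]$, but the low values of $s$ are recovered trivially from the $H^6$ bound together with $L_2(6)\le L(s)$ for the relevant $\alpha$.
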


Lemma \ref{l8} gives the estimate of the accumulated errors ${E}_n$ and $\widetilde{E}_n$.

\begin{lemma}
Let $\alpha \geq 9$.  There exist $\delta >0$, $T>0$ sufficiently small, and $\theta_0 \geq 1$ sufficiently large, such that
\begin{equation}
\|{E}_n\|_{H^{\alpha +3}(\Omega_T)}+\|\widetilde{E}_n\|_{H^{\alpha +3}(\partial\Omega_T)}\leq C\delta^2\theta_n.
\label{104}
\end{equation}
\label{l9}
\end{lemma}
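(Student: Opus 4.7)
The plan is to apply Lemma \ref{l8} termwise at the specific Sobolev index $s=\alpha+3$ and then sum telescopically. First I would check that this value of $s$ lies in the admissible range $[3,\tilde{\alpha}-2]$ of Lemma \ref{l8}: since we have fixed $\tilde{\alpha}=\alpha+5$, the upper endpoint $\tilde{\alpha}-2$ equals $\alpha+3$ exactly, while $\alpha+3\geq 12\geq 3$ follows from $\alpha\geq 9$.

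Next I would compute the exponent $L(\alpha+3)$ using the formula from Lemma \ref{l7}. Substituting $s=\alpha+3$ into the three candidates $(s+2-\alpha)_++10-2\alpha$, $(s+1-\alpha)_++11-2\alpha$, $s+7-2\alpha$ gives the values $15-2\alpha$, $15-2\alpha$, and $10-\alpha$, respectively. For any $\alpha\geq 5$ the third term dominates, so $L(\alpha+3)=10-\alpha$. The crucial observation is that the hypothesis $\alpha\geq 9$ is exactly what forces $L(\alpha+3)-1\leq 0$, and hence $\theta_k^{L(\alpha+3)-1}\leq 1$ uniformly in $k$. Applying Lemma \ref{l8} then yields the termwise bound
\[
\|e_k\|_{H^{\alpha+3}(\Omega_T)}+\|\tilde{e}_k\|_{H^{\alpha+3}(\partial\Omega_T)}\leq C\delta^2\theta_k^{L(\alpha+3)-1}\Delta_k\leq C\delta^2\Delta_k.
\]

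Summing over $k=0,\ldots,n-1$ and using the telescoping identity $\sum_{k=0}^{n-1}\Delta_k=\theta_n-\theta_0\leq \theta_n$ gives
\[
\|E_n\|_{H^{\alpha+3}(\Omega_T)}+\|\widetilde{E}_n\|_{H^{\alpha+3}(\partial\Omega_T)}\leq C\delta^2\sum_{k=0}^{n-1}\Delta_k\leq C\delta^2\theta_n,
\]
which is exactly \eqref{104}.

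I do not anticipate a genuine obstacle here, since the content of the lemma reduces to arithmetic on the exponents already recorded in Lemmata \ref{l4}--\ref{l7}. The only point deserving attention is the bookkeeping of the parameter $\alpha$: the threshold $\alpha\geq 9$ enters precisely to push $L(\alpha+3)-1$ to zero or below. Had we chosen a smaller threshold, an integral comparison would give $\sum_k\theta_k^{L(\alpha+3)-1}\Delta_k\sim \theta_n^{L(\alpha+3)}$, which would grow strictly faster than $\theta_n$ and break the inductive hypothesis $(H_n)$ at the next stage. The stronger assumptions $\alpha\geq 4$ and $\alpha\geq 5$ appearing in Lemmata \ref{l4}--\ref{l7} are automatically satisfied under $\alpha\geq 9$, and the smallness of $\delta$, $T$ together with largeness of $\theta_0$ required in those lemmata are simply inherited.
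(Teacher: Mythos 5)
Your proof is correct and follows essentially the same route as the paper: apply the termwise bound of Lemma \ref{l8} at $s=\alpha+3$ (admissible since $\tilde{\alpha}=\alpha+5$), use that $L(\alpha+3)=10-\alpha\leq 1$ for $\alpha\geq 9$ so that $\theta_k^{L(\alpha+3)-1}\leq 1$, and sum $\sum_{k=0}^{n-1}\Delta_k=\theta_n-\theta_0\leq\theta_n$. Your explicit computation of $L(\alpha+3)$ just makes precise the paper's remark that ``one can check'' $L(\alpha+3)\leq 1$ when $\alpha\geq 9$.
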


\begin{proof} One can check that $L(\alpha +3)\leq 1$ if $\alpha \geq 9$, where $L(s)$ is defined in Lemma \ref{l7}. It follows from \eqref{103} that
\[
\begin{split}
\|{E}_n\|_{H^{\alpha +3}(\Omega_T)}  +\|\widetilde{E}_n\|_{H^{\alpha +3}(\partial\Omega_T)} &
\leq \sum_{k=0}^{n-1}\left(
\|{e}_k\|_{H^{\alpha +3}(\Omega_T)}+\|\tilde{e}_k\|_{H^{\alpha +3}(\partial\Omega_T)}\right)\\ & \leq \sum_{k=0}^{n-1}C\delta^2\Delta_k\leq C\delta^2\theta_n
\end{split}
\]
for $\alpha \geq 9$ and $\alpha +3\in [3,\tilde{\alpha}-2]$, i.e., $\tilde{\alpha}\geq \alpha +5$. The minimal possible
$\tilde{\alpha}$ is $\alpha +5$, i.e., our choice $\tilde{\alpha}= \alpha +5$ is suitable.
\end{proof}

We can now derive the estimates of the source terms ${f}_n$ and ${g}_n$ defined in \eqref{79^}.

\begin{lemma}
Let $\alpha \geq 9$.  There exist $\delta >0$, $T>0$ sufficiently small, and $\theta_0 \geq 1$ sufficiently large, such that for all integer $s\in [3,\widetilde{\alpha}+2]$, one has
\begin{align}
& \|{f}_n\|_{H^{s}(\Omega_T)}\leq  C\Delta_n\bigl\{ \theta_n^{s-\alpha -3}\left( \|{f}^a\|_{H^{\alpha +2}(\Omega_T)}+\delta^2\right)+\delta^2\theta_n^{L(s)-1} \bigr\},\nonumber\\[6pt]
 & \|{g}_n\|_{H^{s}(\partial\Omega_T)}\leq  C\delta^2\Delta_n\bigl( \theta_n^{L(s)-1}+\theta_n^{s-\alpha -3}\bigr).
\nonumber
\end{align}
\label{l10}
\end{lemma}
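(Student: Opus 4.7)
My plan is to derive telescoping formulas for $f_n$ and $g_n$ from the defining relations \eqref{79^} and then bound each piece using the smoothing estimates of Proposition \ref{p1a} combined with Lemmata \ref{l8} and \ref{l9}. For $n \geq 1$, subtracting the identity \eqref{79^} at step $n-1$ from the one at step $n$ and using $E_n = E_{n-1}+e_{n-1}$ and $\widetilde{E}_n = \widetilde{E}_{n-1} + \tilde{e}_{n-1}$ (cf.\ \eqref{78^}), one obtains
\begin{equation*}
f_n = (S_{\theta_n}-S_{\theta_{n-1}})f^a - S_{\theta_n}e_{n-1} - (S_{\theta_n}-S_{\theta_{n-1}})E_{n-1},
\end{equation*}
\begin{equation*}
g_n = -S_{\theta_n}\tilde{e}_{n-1} - (S_{\theta_n}-S_{\theta_{n-1}})\widetilde{E}_{n-1}.
\end{equation*}
The case $n=0$ is trivial, as $f_0 = S_{\theta_0}f^a$ and $g_0 = 0$.

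Next, I would estimate each of the three terms in the formula for $f_n$ separately. For the first one, writing $S_{\theta_n}-S_{\theta_{n-1}} = \int_{\theta_{n-1}}^{\theta_n} (d/d\theta)S_\theta\, d\theta$ and invoking \eqref{74^} with regularity index $\alpha+2$ gives
\begin{equation*}
\|(S_{\theta_n}-S_{\theta_{n-1}})f^a\|_{H^s(\Omega_T)} \leq C\Delta_n\,\theta_n^{s-\alpha-3}\|f^a\|_{H^{\alpha+2}(\Omega_T)},
\end{equation*}
since $\theta_n - \theta_{n-1}\sim \Delta_n$. For the third term, the same idea together with Lemma \ref{l9} yields $\|(S_{\theta_n}-S_{\theta_{n-1}})E_{n-1}\|_{H^s(\Omega_T)} \leq C\Delta_n\theta_n^{s-\alpha-4}\|E_{n-1}\|_{H^{\alpha+3}(\Omega_T)} \leq C\delta^2\Delta_n\,\theta_n^{s-\alpha-3}$. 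For the middle term, Lemma \ref{l8} bounds $\|e_{n-1}\|_{H^s(\Omega_T)}$ by $C\delta^2\theta_{n-1}^{L(s)-1}\Delta_{n-1}$ whenever $s\in[3,\tilde\alpha-2]$, and \eqref{72^} then gives the same bound (up to constants) for $\|S_{\theta_n}e_{n-1}\|_{H^s(\Omega_T)}$; for $s \in (\tilde\alpha-2,\tilde\alpha+2]$ one uses \eqref{72^} with source regularity $\tilde\alpha-2$ to absorb the extra derivatives through the smoothing operator, obtaining $\|S_{\theta_n}e_{n-1}\|_{H^s(\Omega_T)} \leq C\theta_n^{s-\tilde\alpha+2}\|e_{n-1}\|_{H^{\tilde\alpha-2}(\Omega_T)}$. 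Summing the three contributions yields the claimed estimate for $f_n$, and the analogous computation for $g_n$ (omitting the first piece, which has no boundary counterpart) yields the stated estimate for $g_n$.

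The main technical subtlety lies in verifying that, for $s$ above the regularity threshold $\tilde\alpha-2$ of Lemma \ref{l8}, the smoothing gain matches the exponent $L(s)-1$. For $\alpha \geq 9$ the function $L(s)$ from Lemma \ref{l7} reduces to $s+7-2\alpha$ in the relevant range (once $s \geq 4$), i.e., it is affine with slope $1$, so that $(s-\tilde\alpha+2) + (L(\tilde\alpha-2)-1) = L(s)-1$, and the case split matches cleanly. Beyond this bookkeeping, the argument follows the familiar pattern of \cite{Tcpam,ST,CS2}, the only difference being the modified exponent $L(s)$ inherited from the slightly weaker bound \eqref{94^} in Proposition \ref{p3}.
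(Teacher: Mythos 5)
Your proof is correct and is essentially the same argument the paper invokes by reference to lemma 4.16 of \cite{Tcpam}: the telescoping decomposition of $f_n$ and $g_n$ obtained from \eqref{79^}, estimated term by term via the smoothing properties \eqref{72^}, \eqref{74^} together with Lemmata \ref{l8} and \ref{l9}, and the exponent bookkeeping $(s-\tilde{\alpha}+2)+L(\tilde{\alpha}-2)-1=L(s)-1$ does check out for $\alpha\geq 9$, $\tilde{\alpha}=\alpha+5$. The only loose point is calling the case $n=0$ trivial (for $f_0=S_{\theta_0}f^a$ the stated bound is not immediate), but this is harmless since the estimate is only needed in the inductive step $n\geq 1$, the step $(H_0)$ being handled separately in Lemma \ref{l13}.
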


The proof of Lemma \ref{l10} (with the help of \eqref{72^}, \eqref{74^}, \eqref{103} and \eqref{104}) is absolutely analogous to the proof of lemma 4.16 in \cite{Tcpam}. We are now in a position to obtain the estimate of the solution to problem \eqref{75^} by employing the tame estimate \eqref{38''}. Then the estimate of $(\delta U_n,\delta\varphi_n)$ follows from formula \eqref{76^}.

\begin{lemma}
Let $\alpha \geq 9$.  There exist $\delta >0$, $T>0$ sufficiently small, and $\theta_0 \geq 1$ sufficiently large, such that for all integer $s\in [3,\widetilde{\alpha}]$, one has
\begin{equation}
\|\delta U_n\|_{H^{s}(\Omega_T)}+\|\delta \varphi_n\|_{H^{s}(\partial\Omega_T)}\leq  \delta\theta_n^{s-\alpha -1}\Delta_n.
\label{107}
\end{equation}
\label{l11}
\end{lemma}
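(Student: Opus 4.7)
The approach is to apply the tame estimate \eqref{38"} of Theorem \ref{t4.2} to the linearized problem \eqref{75^} with basic state $(U^a+U_{n+1/2},\varphi^a+\varphi_{n+1/2})$, to control the right-hand side using Lemma \ref{l10} and the coefficient bounds coming from Proposition \ref{p3} and Lemma \ref{l3}, and finally to pass from the good unknown $\delta\dot U_n$ back to $\delta U_n$ through the identity \eqref{76^}. Proposition \ref{p3} ensures that, for $\delta,T$ small and $\theta_0$ large, this modified state satisfies the assumptions \eqref{29}--\eqref{31} and \eqref{40} required by Theorem \ref{t4.2}, and that \eqref{37'} holds with some $\widehat K\leq K_0$.

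Substituting into \eqref{38"} for integer $s\in[3,\tilde\alpha]$ yields
\[
\|\delta\dot U_n\|_{H^s(\Omega_T)}+\|\delta\varphi_n\|_{H^s(\partial\Omega_T)}\le C\bigl\{\|f_n\|_{H^{s+1}}+\|g_n\|_{H^{s+2}}+(\|f_n\|_{H^4}+\|g_n\|_{H^5})\,\mathcal{K}_s\bigr\},
\]
where $\mathcal{K}_s:=\|U^a+U_{n+1/2}\|_{H^{s+2}(\Omega_T)}+\|\varphi^a+\varphi_{n+1/2}\|_{H^{s+2}(\partial\Omega_T)}$. Lemma \ref{l10}, combined with the elementary arithmetic check that for $\alpha\ge 9$ one has $L(s+1)-1\le s-\alpha-1$ and $L(s+2)-1\le s-\alpha-1$ on the range $s\in[3,\tilde\alpha]$, gives
\[
\|f_n\|_{H^{s+1}}+\|g_n\|_{H^{s+2}}\le C\Delta_n\,\theta_n^{s-\alpha-1}\bigl(\theta_n^{-1}\|f^a\|_{H^{\alpha+2}}+\delta^2\bigr),
\]
\[
\|f_n\|_{H^4}+\|g_n\|_{H^5}\le C\Delta_n\,\theta_n^{1-\alpha}\bigl(\|f^a\|_{H^{\alpha+2}}+\delta^2\bigr).
\]
For the coefficient norm, combining \eqref{66^}, \eqref{93^}, \eqref{94^} with \eqref{83g}--\eqref{84g} (note $s+2\le m+8$ lies in the admissible range) yields $\mathcal{K}_s\le C_*+C\delta\,\theta_n^{s+4-\alpha}$. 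Multiplying the low-norm source bound by $\mathcal{K}_s$ produces the exponents $1-\alpha$ and $s+5-2\alpha$, both $\le s-\alpha-1$ for $\alpha\ge 6$. Hence
\[
\|\delta\dot U_n\|_{H^s}+\|\delta\varphi_n\|_{H^s}\le C_0\,\Delta_n\,\theta_n^{s-\alpha-1}\bigl(\|f^a\|_{H^{\alpha+2}}+\delta^2\bigr).
\]

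The return to $\delta U_n$ via \eqref{76^} is a standard Moser estimate: the factor $\bigl(\partial_1(\Phi^a+\Psi_{n+1/2})\bigr)^{-1}$ is bounded in $L^\infty$ by the non-degeneracy ensured through Proposition \ref{p3}, and $\delta\Psi_n=\chi\,\delta\varphi_n$ has the same $H^s$ size as $\delta\varphi_n$. The resulting extra contribution $C(\|\delta\varphi_n\|_{L^\infty}\mathcal{K}_s+\|\delta\varphi_n\|_{H^s})$ is absorbed by the same exponent bookkeeping. Finally, by \eqref{68^} we have $\|f^a\|_{H^{\alpha+2}}\le\|f^a\|_{H^{m+7}}\le\delta_0(T)$, so choosing $T$ so small that $\delta_0(T)\le\delta/(4C_0')$ and $\delta$ so small that $\delta\le 1/(4C_0')$ yields precisely \eqref{107}.

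The principal technical difficulty is the accounting of exponents. The loss in \eqref{94^} of an additional derivative compared with the analogous estimate of \cite{Tcpam} produces the factor $\theta_n^{s+4-\alpha}$ in $\mathcal{K}_s$ instead of $\theta_n^{s+2-\alpha}$, feeding the exponent $s+5-2\alpha$ into the cross term; keeping it below $s-\alpha-1$ requires only $\alpha\ge 6$, comfortably below the threshold $\alpha\ge 9$ already dictated by Lemmas \ref{l7} and \ref{l9}. Once the exponent budget is seen to balance, the remainder of the argument is routine.
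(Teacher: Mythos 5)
Your proposal is correct and follows essentially the same route as the paper: verify \eqref{37'} for the modified state via Proposition \ref{p3}, Lemma \ref{l3} and \eqref{66^}, apply the tame estimate \eqref{38"} of Theorem \ref{t4.2} to problem \eqref{75^}, bound ${f}_n,{g}_n$ by Lemma \ref{l10}, return from $\delta\dot{U}_n$ to $\delta U_n$ through \eqref{76^} with a Moser-type estimate, and close with the exponent bookkeeping and \eqref{68^} by taking $\delta$ and $T$ small. The only slip is in your low-norm source bound, where the $\|{g}_n\|_{H^5}$ contribution forces $\theta_n^{2-\alpha}$ (plus a $\delta^2\theta_n^{11-2\alpha}$ term) rather than $\theta_n^{1-\alpha}$; this is harmless, since the resulting cross-term exponents $2-\alpha$ and $s+6-2\alpha$ are still at most $s-\alpha-1$ once $\alpha\geq 7$, well within the assumed range $\alpha\geq 9$.
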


\begin{proof}
Without loss of generality we can take the constant $K_0$ appearing in estimate \eqref{38''} that $K_0=2C_*$, where
$C_*$ is the constant from \eqref{66^}. In order to apply Theorem \ref{t4.2}, by using \eqref{83g} and (\ref{94^}), we check that
\[
\|{U}^a+{U}_{n+1/2}\|_{H^{5}(\Omega_T)}+\|\varphi^a+S_{\theta_n}\varphi_n\|_{H^{5}(\partial\Omega_T)}\leq 2C_*
\]
for $\alpha \geq 9$ and $\delta$ small enough. That is, assumption \eqref{37'} is satisfied for the coefficients of problem (\ref{75^}). By applying the tame estimate (\ref{38}), for $T$ small enough one has
\begin{equation}
\begin{split}
\|\delta & \dot{{U}}_n\|_{H^s(\Omega_T)}+\|\delta \varphi_n\|_{H^{s}(\partial\Omega_T)} \\ &
\begin{split}
\leq C\bigl\{ &
\|{f}_n\|_{H^{s+1}(\Omega_T)}+ \|{g}_n\|_{H^{s+2}(\partial\Omega_T)} \\ &
+\bigl( \|{f}_n\|_{H^{4}(\Omega_T)}+ \|{g}_n\|_{H^{5}(\partial\Omega_T)} \bigr)\\ &\,\quad\cdot\bigl(
\|{U}^a +{U}_{n+1/2}\|_{H^{s+2}(\Omega_T)}+\|\varphi^a +S_{\theta_n}\varphi_n\|_{H^{s+2}(\partial\Omega_T)}\bigr)\bigr\}.
\end{split}
\end{split}
\label{108}
\end{equation}

Using Moser-type inequalities, from formula (\ref{76^}) we obtain
\[
\begin{split}
\|\delta{U}_n\|_{H^s(\Omega_T)}\leq &\, \|\delta\dot{{U}}_n\|_{H^s(\Omega_T)} +C\bigl\{\|\delta \varphi_n\|_{H^{s}(\partial\Omega_T)}
\\ &
+ \|\delta \varphi_n\|_{H^{3}(\partial\Omega_T)}\|\varphi^a +S_{\theta_n}\varphi_n\|_{H^{s}(\partial\Omega_T)}
\bigr\}.
\end{split}
\]
Then (\ref{108}) yields
\begin{equation}
\begin{split}
\|\delta & {{U}}_n\|_{H^s(\Omega_T)}+\|\delta \varphi_n\|_{H^{s}(\partial\Omega_T)} \\ & \begin{split} \leq C\bigl\{ &
\|{f}_n\|_{H^{s+1}(\Omega_T)}+ \|{g}_n\|_{H^{s+2}(\partial\Omega_T)} \\ &
 +\bigl( \|{f}_n\|_{H^{4}(\Omega_T)}+ \|{g}_n\|_{H^{5}(\partial\Omega_T)} \bigr)\\ &\,\quad\cdot\bigl(
\|{U}^a +{U}_{n+1/2}\|_{H^{s+2}(\Omega_T)}+\|\varphi^a +S_{\theta_n}\varphi_n\|_{H^{s+2}(\partial\Omega_T)}\bigr)\bigr\}
\end{split}
\end{split}
\label{109}
\end{equation}
for all integer $s\in [3,\widetilde{\alpha}]$.

Applying Lemma \ref{l11}, (\ref{83g}), and Proposition \ref{p3}, from (\ref{109}) we derive the estimate
\begin{equation}
\begin{split}
\|\delta & {{U}}_n\|_{H^s(\Omega_T)}+\|\delta \varphi_n\|_{H^{s}(\partial\Omega_T)} \\ & \begin{split}\leq & \,C
\bigl\{ \theta_n^{s-\alpha -1}\left( \|{f}^a\|_{H^{\alpha+2}(\Omega_T)}+\delta^2\right)+\delta^2\theta_n^{L(s+2)-1} \bigr\}\Delta_n\\
&  +C\delta\Delta_n\bigr\{\theta_n^{2-\alpha}\left( \|{f}^a
\|_{H^{\alpha+2}(\Omega_T)}
+\delta^2\right)
+\delta^2\theta_n^{11-2\alpha}\bigr\}\\ & \quad\cdot\bigl\{ C_*+\theta_n^{(s+2-\alpha )_+}+\theta_n^{s+4-\alpha}\bigr\}.
\end{split}
\end{split}
\label{110}
\end{equation}
We can now check that the inequalities
\begin{equation}
\left\{
\begin{array}{l}
 L(s+2)\leq s-\alpha,\quad (s +2-\alpha)_+ +2-\alpha \leq s-\alpha -1,\\[6pt]
 (s +2-\alpha)_+ +11-2\alpha \leq s-\alpha -1,\\[6pt]
 s+6-2\alpha \leq s-\alpha -1,\quad s+15-3\alpha \leq s-\alpha -1
\end{array}
\right.
\label{111}
\end{equation}
hold for $\alpha\geq 9$ and $s\in [3,\tilde{\alpha}]$. That is, it follows from (\ref{110}) and (\ref{68^}) that
\[
\|\delta{{U}}_n\|_{H^s(\Omega_T)}+\|\delta \varphi_n\|_{H^{s}(\partial\Omega_T)}\leq C\left( \delta_0(T)+\delta^2\right)
\theta_n^{s-\alpha -1}\Delta_n \leq \delta\theta_n^{s-\alpha -1}\Delta_n
\]
for $\delta$ and $T$ small enough.
\end{proof}

Inequality (\ref{107}) is point a) of ($H_n$). The lemma below gives us points b) and c) of ($H_n$).

\begin{lemma}
Let $\alpha \geq 9$.  There exist $\delta >0$, $T>0$ sufficiently small, and $\theta_0 \geq 1$ sufficiently large, such that for all integer $s\in [3,\widetilde{\alpha}-2]$
\begin{equation}
\|\mathcal{ L}({U}_n,{\Psi}_n)-{f}^a\|_{H^s(\Omega_T)}\leq 2\delta\theta_n^{s-\alpha -1}.
\label{193}
\end{equation}
Moreover, for all integer $s\in [4,{\alpha}]$ one has
\begin{equation}
\|\mathcal{ B}({U}_n|_{x_1=0},\varphi _n)\|_{H^s(\partial\Omega_T)}\leq \delta\theta_n^{s-\alpha -1}.
\label{194}
\end{equation}
\label{l12}
\end{lemma}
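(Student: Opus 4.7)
The plan is to derive \eqref{193} and \eqref{194} by telescoping the iteration scheme and then bounding the resulting pieces using the accumulated-error estimate of Lemma \ref{l9}, the single-step-error estimate of Lemma \ref{l8}, the smoothing properties of Proposition \ref{p1a}, and the control \eqref{68^} on $f^a$.

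First I would exploit the single-step identity built into \eqref{75^}, namely $\mathbb{L}'_e\,\delta\dot{U}_k = f_k$ and $\mathbb{B}'_{k+1/2}(\delta\dot{U}_k,\delta\varphi_k)=g_k$, so that the decompositions preceding \eqref{77^} reduce to
\[
\mathcal{L}(U_{k+1},\Psi_{k+1}) - \mathcal{L}(U_k,\Psi_k) = f_k + e_k,
\qquad
\mathcal{B}(U_{k+1}|_{x_1=0},\varphi_{k+1}) - \mathcal{B}(U_k|_{x_1=0},\varphi_k) = g_k + \tilde{e}_k.
\]
Telescoping from $k=0$ to $k=n-1$, using $\mathcal{L}(0,0)=0$ together with $\mathcal{B}(U^a|_{x_1=0},\varphi^a)=0$ from \eqref{12.1"}, and substituting the recurrence \eqref{79^} in the form $\sum_{k=0}^{n-1}f_k = S_{\theta_{n-1}}(f^a - E_{n-1})$ and $\sum_{k=0}^{n-1}g_k = -S_{\theta_{n-1}}\widetilde{E}_{n-1}$, I arrive at the key decompositions
\[
\mathcal{L}(U_n,\Psi_n) - f^a = -(I - S_{\theta_{n-1}})f^a + (I - S_{\theta_{n-1}})E_{n-1} + e_{n-1},
\]
\[
\mathcal{B}(U_n|_{x_1=0},\varphi_n) = (I - S_{\theta_{n-1}})\widetilde{E}_{n-1} + \tilde{e}_{n-1}.
\]

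Next I would bound each summand separately. Since $m=\alpha-1$, $f^a\in H^{\alpha+6}(\Omega_T)$ with $\|f^a\|_{H^{\alpha+6}}\leq \delta_0(T)$ by \eqref{68^}, the smoothing inequality \eqref{73^} gives $\|(I - S_{\theta_{n-1}})f^a\|_{H^s(\Omega_T)} \leq C\delta_0(T)\theta_n^{s-\alpha-6} \leq C\delta_0(T)\theta_n^{s-\alpha-1}$ for $s\in[3,\alpha+3]$. Coupling \eqref{73^} with Lemma \ref{l9} produces $\|(I-S_{\theta_{n-1}})E_{n-1}\|_{H^s(\Omega_T)} \leq C\delta^2\theta_n^{s-\alpha-2}$ and its boundary analogue for $\widetilde{E}_{n-1}$, valid for $s\leq\alpha+3$. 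Finally, Lemma \ref{l8} together with $\Delta_{n-1}\lesssim\theta_n^{-1}$ gives $\|e_{n-1}\|_{H^s(\Omega_T)} + \|\tilde{e}_{n-1}\|_{H^s(\partial\Omega_T)} \leq C\delta^2\theta_n^{L(s)-2}$.

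The one arithmetic step to verify is that $L(s)-2 \leq s-\alpha-1$, i.e. $L(s) \leq s-\alpha+1$, on the required ranges $s\in[3,\alpha+3]$ (for \eqref{193}) and $s\in[4,\alpha]$ (for \eqref{194}). With the explicit form of $L(s)$ from Lemma \ref{l7}, a short case split on whether $s\leq\alpha-2$ or $s\geq\alpha-1$ reduces the required inequality in each of the three arguments of the maximum to $\alpha\geq 6$, which is comfortably implied by the hypothesis $\alpha\geq 9$. Putting the bounds together, the right-hand sides of both decompositions are $\leq C(\delta_0(T)+\delta^2)\theta_n^{s-\alpha-1}$; choosing $T>0$ small so that $\delta_0(T)\leq\delta$ and then $\delta>0$ small so that $C(\delta_0(T)+\delta^2)\leq\delta$ for \eqref{194} and $\leq 2\delta$ for \eqref{193} closes the induction. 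The substantive content is really the bookkeeping on $L(s)$: the threshold $\alpha\geq 9$ is precisely what makes all three loss functions appearing inside $L$ meet the required bound simultaneously. Past that check, each individual summand estimate is a one-line application of the machinery already assembled in the preceding subsections, so I expect no further obstacle.
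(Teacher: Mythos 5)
Your argument is correct and is essentially the proof the paper points to (lemma 4.19 of \cite{Tcpam}): telescoping $\mathcal{L}(U_{k+1},\Psi_{k+1})-\mathcal{L}(U_k,\Psi_k)=f_k+e_k$ and its boundary analogue, inserting the recurrence \eqref{79^}, and bounding $(I-S_{\theta_{n-1}})f^a$, $(I-S_{\theta_{n-1}})E_{n-1}$, $(I-S_{\theta_{n-1}})\widetilde{E}_{n-1}$ and $e_{n-1},\tilde e_{n-1}$ via \eqref{73^}, Lemma \ref{l9}, Lemma \ref{l8} and $\Delta_{n-1}\lesssim\theta_n^{-1}$. The only nitpick is arithmetic: the middle argument of $L(s)$, namely $(s+1-\alpha)_++11-2\alpha\leq s-\alpha+1$, requires $\alpha\geq 7$ (not $6$) when $s=3<\alpha-1$, which is immaterial under the hypothesis $\alpha\geq 9$.
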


Again, we can just refer to \cite{Tcpam} for the proof of Lemma \ref{l12} (see the proof of lemma 4.19 in \cite{Tcpam}). As follows from Lemmata \ref{l11} and \ref{l12}, we have proved that $(H_{n-1})$ implies $(H_{n})$, provided that
$\alpha \geq 9$, $\tilde{\alpha}=\alpha +5$, the constant $\theta_0\geq 1$ is large enough, and $T>0$, $\delta >0$ are small enough. Fixing now the constants $\alpha$, $\delta$, and $\theta_0$, exactly as in \cite{Tcpam} (see there the proof of lemma 4.20), we can prove that $(H_0)$ is true. That is, we have

\begin{lemma}
If the time $T>0$ is sufficiently small, then $(H_0)$ is true.
\label{l13}
\end{lemma}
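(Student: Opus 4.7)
The plan is to verify directly, for $k=0$ only, the three conditions (a), (b), (c) that constitute $(H_0)$, exploiting the fact that $U_0=0$ and $\varphi_0=0$ together with the properties of the approximate solution constructed in Lemma~\ref{l2}. Condition (c) is immediate: since $(U_0,\varphi_0)=(0,0)$, one has
\[
\mathcal{B}(U_0|_{x_1=0},\varphi_0)=\mathbb{B}(U^a|_{x_1=0},\varphi^a)=0
\]
by \eqref{12.1"}, so the required bound holds for every $s\in[4,\alpha]$. Condition (b) reduces to bounding $\|\mathcal{L}(0,0)-f^a\|_{H^s(\Omega_T)}=\|f^a\|_{H^s(\Omega_T)}$ for $s\in[3,\tilde{\alpha}-2]$. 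Since $\tilde{\alpha}-2=\alpha+3=m+4\leq m+7$, estimate \eqref{68^} gives $\|f^a\|_{H^s(\Omega_T)}\leq C\delta_0(T)$, which can be forced below $2\delta\theta_0^{s-\alpha-1}$ uniformly in $s\in[3,\tilde{\alpha}-2]$ by taking $T$ small enough (with $\theta_0$ and $\delta$ already fixed).

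The substantive step is (a), which amounts to solving problem \eqref{75^} at $n=0$ and estimating $(\delta U_0,\delta\varphi_0)$. First I would check that the modified state produced by Proposition~\ref{p3} satisfies $U_{1/2}=0$, $\varphi_{1/2}=0$. Indeed, \eqref{93^} reduces to $\varphi_{1/2}=0$, $p_{1/2}=0$, $v_{j,1/2}=0$ for $j=2,3$; the lifting defining $v_{1,1/2}$ uses the function $\mathcal{G}$ which vanishes under these choices; for the deformation-gradient corrections, $b_{j,0}$ solves $\mathcal{K}(v^a+v_{1/2})(F^a_{\tan_j}+b_{j,0})=0$ with zero initial data, and since the approximate solution satisfies \eqref{31} (so $\mathcal{K}(v^a)F^a_{\tan_j}=0$) this forces $b_{j,0}=0$, hence $F_{kj,1/2}=0$. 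Therefore \eqref{75^} at $n=0$ becomes the linearized problem with coefficients $(U^a,\varphi^a)$, source $(S_{\theta_0}f^a,0)$, and zero past data. By Lemma~\ref{l2} these coefficients satisfy \eqref{29}, \eqref{30}, \eqref{31} and the non-collinearity condition \eqref{40} on $[0,T]\times\partial\Omega$, and they satisfy \eqref{37'} with $\widehat{K}\leq C_*\leq K_0/2$ thanks to \eqref{66^}. Hence the tame estimate \eqref{38"} of Theorem~\ref{t4.2} applies and yields
\[
\|\delta\dot{U}_0\|_{H^s(\Omega_T)}+\|\delta\varphi_0\|_{H^s(\partial\Omega_T)}\leq C\bigl\{\|S_{\theta_0}f^a\|_{H^{s+1}(\Omega_T)}+\|S_{\theta_0}f^a\|_{H^4(\Omega_T)}\bigl(\|U^a\|_{H^{s+2}(\Omega_T)}+\|\varphi^a\|_{H^{s+2}(\partial\Omega_T)}\bigr)\bigr\}.
\]
For $s\in[3,\tilde{\alpha}]$ one has $s+1\leq\tilde{\alpha}+1=m+7$ and $s+2\leq m+8$, so \eqref{72^} gives $\|S_{\theta_0}f^a\|_{H^{s+1}}\leq\|f^a\|_{H^{m+7}}\leq \delta_0(T)$ and \eqref{66^} controls the coefficient norms by $C_*$. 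Combining and returning to $\delta U_0$ via \eqref{76^} (which contributes a lower-order term bounded by $\|\delta\varphi_0\|_{H^s}\|U^a\|_{W^{1,\infty}}$), one obtains
\[
\|\delta U_0\|_{H^s(\Omega_T)}+\|\delta\varphi_0\|_{H^s(\partial\Omega_T)}\leq C\delta_0(T)
\]
with $C$ independent of $T$ for $T\leq 1$, and since $\delta_0(T)\to 0$ the right-hand side can be made smaller than the fixed constant $\delta\theta_0^{s-\alpha-1}\Delta_0$ uniformly in $s\in[3,\tilde\alpha]$ by choosing $T$ sufficiently small.

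The only nontrivial point is the verification that $U_{1/2}$ collapses to zero when $U_0=0$; in particular, the cancellation $b_{j,0}=0$ crucially uses that the approximate solution satisfies the tangential transport equation \eqref{31}, an assumption put in place precisely so that this base step (and the analogous reductions at general $n$) goes through. Once this is in hand, the rest is routine: (c) is free, (b) is a direct application of \eqref{68^}, and (a) follows from the linear tame estimate together with the $T\to 0$ decay of both $f^a$ and $S_{\theta_0}f^a$.
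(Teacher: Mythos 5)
Your proposal is correct and follows essentially the same route as the argument the paper invokes (the proof of lemma 4.20 in \cite{Tcpam}): points (b) and (c) of $(H_0)$ reduce to \eqref{68^} and \eqref{12.1"} since $(U_0,\varphi_0)=0$, and point (a) comes from solving \eqref{75^} at $n=0$ with $f_0=S_{\theta_0}f^a$, $g_0=0$ and applying the tame estimate together with $\delta_0(T)\to 0$. Your explicit check that the modified state of Proposition \ref{p3} collapses to zero at step $0$ (in particular $b_{j,0}=0$ via \eqref{31} for the approximate solution) is a correct and welcome elaboration of a detail the paper leaves to the reference.
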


\subsection{The proof of Theorem \ref{t02}}

We consider initial data $({U}_0,\varphi_0)\in H^{m+15/2}(\Omega)\times H^{m+15/2}(\partial\Omega)$ satisfying all the assumptions of Theorem \ref{t02}. In particular, they satisfy the compatibility conditions up to order $\mu=m+7$ (see Definition \ref{d1}). Then, thanks to Lemmata \ref{l4.1} and \ref{l2} we can construct an approximate solution $({U}^{a},\varphi^a)\in H^{m +8}(\Omega_T)\times H^{m +8}(\partial\Omega_T)$ that satisfies (\ref{66^}). As follows from Lemmata \ref{l11}--\ref{l13}, $(H_n)$ holds for all integer $n\geq 0$, provided that $\alpha \geq 9$, $\tilde{\alpha}=\alpha +5$, the constant $\theta_0\geq 1$ is large enough, and the time $T>0$ and the constant $\delta >0$ are small enough.  In particular, ($H_n$) implies
\[
\sum_{n=0}^{\infty}\left\{ \|\delta{U}_n\|_{H^m(\Omega_T)} +\|\delta \varphi_n\|_{H^m(\partial\Omega_T)}\right\} < \infty.
\]
Hence, the sequence $({U}_n,\varphi_n)$ converges in $H^{m}(\Omega_T)\times H^{m}(\partial\Omega_T)$ to some
limit $({U} ,\varphi )$. Recall that $m=\alpha -1 \geq 8$. Passing to the limit in (\ref{193}) and (\ref{194}) with
$s=m$, we obtain (\ref{69^})--(\ref{71^}). Consequently, ${U} :={U} +{U}^a$, $\varphi := \varphi +\varphi^a$ is a solution of problem \eqref{11.1}--\eqref{13.1}. As was already noticed above, this solution is unique. This completes the proof of Theorem \ref{t02}.

\section{Ill-posedness for simultaneous failure of non-collinearity and Rayleigh-Taylor sign conditions}
\label{s5}

Since below our goal will be the construction of particular exponential solutions, it is natural that in \eqref{frozen}, \eqref{frozen_bound} we drop the source terms $f$ and $g$, introduce non-zero initial data and consider the problem for all times $t>0$ and in the whole half-space $\mathbb{R}^3_+=\{ x_1>0,\ x'\in\mathbb{R}^2\}$ (without the periodicity conditions). We refer to \cite{Tcpaa} for further discussions on Rayleigh-Taylor instability detected as ill-posedness for frozen coefficients. As was noted in \cite{Tcpaa}, if the front symbol is not elliptic, the zero-order terms in $\varphi$ appearing in the boundary conditions (see \eqref{35} and \eqref{frozen_bound}) may play a crucial role for the well-posedness of a frozen coefficients linearized free boundary problem. This can be easily seen on such a classical example as the free boundary problem for the incompressible Euler equations with the vacuum boundary condition $p|_{\Gamma}=0$. For the reader's convenience, we below repeat corresponding short calculations from \cite{Tcpaa}.

\setcounter{subsubsection}{0}

\subsection{Incompressible Euler equations with a vacuum boundary condition}

Without going into details, by analogy with problem \eqref{frozen}, \eqref{frozen_bound} we just write down the frozen coefficients linearized problem associated with the free boundary problem for the incompressible Euler equations with the vacuum boundary condition $p|_{\Gamma}=0$:
\begin{equation}
\partial_tv +\nabla p=0,   \quad
{\rm div}\,v=0, \quad\mbox{in}\ \mathbb{R}_+\times \mathbb{R}^3_+,\label{froz2Db}
\end{equation}
\begin{equation}
 \partial_t\varphi=v_1 +\hat{a}_0
\varphi, \quad  p= \hat{a} \varphi,  \quad\mbox{on}\ \mathbb{R}_+\times \{x_1=0\}\times\mathbb{R}^2,
\label{froz2Dd}
\end{equation}
where $p$ is the scaled pressure (divided by the constant density $\hat{\rho}$) and the constants $\hat{a}_0$ and $\hat{a}$ are the same as in \eqref{frozen_bound}. For simplifying calculations we exclude $\varphi$ from the boundary conditions \eqref{froz2Dd}:
\begin{equation}\label{froz2Dd'}
\partial_tp-\hat{a}_0p-\hat{a}v_1=0  \quad\mbox{on}\ \mathbb{R}_+\times \{x_1=0\}\times\mathbb{R}^2.
\end{equation}

The Hadamard-type ill-posedness example is a sequence of solutions proportional to
\[
\exp \left\{ n\left(s t +\lambda x_1+ i(\omega ', x') \right)\right\}\quad \mbox{for}\  x_1>0,
\]
with  $n=1,2,3,\ldots$ and
\begin{equation}
\Re\,s>0,\quad \Re\,\lambda<0,\label{re_re}
\end{equation}
where $s$ and $\lambda$ are complex constants, $\omega '=(\omega_2,\omega_3)$ and $\omega_{2,3}$ are real constants.
Substituting the sequence
\begin{equation}
\begin{pmatrix} p_n \\ v_n \end{pmatrix} =\begin{pmatrix} \bar{p}_n \\ \bar{v}_n \end{pmatrix}\exp \left\{ n\left(s t +\lambda x_1+ i(\omega ', x') \right)\right\}
\label{exp_sol}
\end{equation}
into equations \eqref{froz2Db}, we get the dispersion relation
\[
s^2(\lambda^2-\omega^2)=0
\]
giving the root $\lambda = -\omega$ with $\Re\lambda <0$ (cf. \eqref{re_re}), where $(\bar{p}_n , \bar{v}_n)$ is a constant vector and $\omega =|\omega'|$. Without loss of generality we may assume that $\omega =1$, i.e. $\lambda =-1$. Then from the first equation in \eqref{froz2Db} we have
\begin{equation}
\bar{v}_{1n}=\frac{\bar{p}_{n}}{s},
\label{vbar}
\end{equation}
where $\bar{v}_{1n}$ is the first component of the vector $\bar{v}_n$.

In view of \eqref{vbar}, substituting \eqref{exp_sol} into the boundary condition \eqref{froz2Dd'}, we get for the constant $\bar{p}_{n}$ an equation which has a non-zero solution if
\begin{equation}
ns^2-\hat{a}_0s-\hat{a}=0.
\label{eq_s}
\end{equation}
For big enough $n$ and $\hat{a}\neq 0$, the last equation has the ``unstable" root
\begin{equation}
s=\frac{\hat{a}_0}{2n}+\sqrt{\frac{\hat{a}}{n}+\frac{\hat{a}_0^2}{4n^2}}=\frac{\sqrt{\hat{a}}}{\sqrt{n}}+\mathcal{O}\left(\frac{1}{n} \right)
\label{root_s}
\end{equation}
if and only if $\hat{a}>0$, i.e., if and only if \eqref{antiRT} holds (the Rayleigh-Taylor sign condition fails).

It is very important that root \eqref{root_s} for $\hat{a}>0$ after the substitution into the exponential sequence \eqref{exp_sol} gives an infinite growth in time of order $\sqrt{n}$ as $n\rightarrow \infty$ for any fixed (even very small) $t>0$. This is not only usual exponential instability but {\it ill-posedness} (violent instability). Note that for $\hat{a}=0$  equation \eqref{eq_s} has the ``unstable" root $s=\hat{a}_0/n$ for $\hat{a}_0>0$ but it just gives exponential instability but not ill-posedness.

\subsection{Free boundary problem for an incompressible inviscid elastic fluid}

Before proceeding to our frozen coefficients problem \eqref{frozen}, \eqref{frozen_bound} we consider the technically simpler case of incompressible inviscid elastic fluid \cite{HaoWang}. Keeping the same arguments and notations as above, we write down the ``incompressible'' counterpart of problem \eqref{frozen}, \eqref{frozen_bound}:
\begin{equation}\label{frozen'}
\left\{
\begin{array}{l}
\displaystyle
\partial_tv +\nabla p -\sum_{j=1}^3\mathcal{L}_jF_j=0, \\[12pt]
\partial_tF_j -\mathcal{L}_jv =0,\quad  {\rm div}\,v =0,  \quad\mbox{in}\ \mathbb{R}_+\times \mathbb{R}^3_+,
\end{array}
\right.
\end{equation}
\begin{equation}
  \partial_t\varphi=v_1 +\hat{a}_0
\varphi, \quad
 p=\hat{a} \varphi ,  \quad\mbox{on}\ \mathbb{R}_+\times \{x_1=0\}\times\mathbb{R}^2,\label{frozen_bound'}
\end{equation}
where the boundary conditions \eqref{frozen_bound'} can be again reduced to \eqref{froz2Dd'}. Moreover, we can totally exclude $F_j$ from system \eqref{frozen'}:
\begin{equation}\label{frozen"}
\biggl(\partial_t^2-\sum_{j=1}^3\mathcal{L}_j^2\biggr) v +\nabla\partial_tp=0,\quad {\rm div}\,v =0,  \quad\mbox{in}\ \mathbb{R}_+\times \mathbb{R}^3_+.
\end{equation}

We seek the sequence of exponential solutions \eqref{exp_sol} to problem \eqref{frozen"}, \eqref{froz2Dd'}. As for the incompressible Euler equations, we easily find $\lambda=-1$ either from the dispersion relation for system \eqref{frozen"} or just from the Laplace equation $\triangle p=0$ following from this system (we again assume without loss of generality that $\omega=1$). The result of substitution of \eqref{exp_sol} with $\lambda=-1$ into the first equation in \eqref{frozen"} implies
\begin{equation}\label{94}
\bar{v}_{1n}=\frac{s}{s^2+\hat{r}^2}\,\bar{p}_{n},
\end{equation}
where $\hat{r}^2=\hat{w}_1^2+\hat{w}_2^2+\hat{w}_3^2$ and $\hat{w}_j=\widehat{F}_{2j}\omega_2+\widehat{F}_{3j}\omega_3$ ($j=1,2,3$). Taking \eqref{94} into account and substituting \eqref{exp_sol} into the boundary condition \eqref{froz2Dd'}, we get for the constant $\bar{p}_{n}$ the equation
\[
(ns-\hat{a}_0)\bar{p}_{n}-\frac{\hat{a}s}{s^2+\hat{r}^2}\,\bar{p}_{n}=0
\]
which has a non-zero solution if
\begin{equation}
\label{95}
(ns-\hat{a}_0)(s^2+\hat{r}^2)-\hat{a}s=0.
\end{equation}

We will go ahead and say that, exactly as for problem \eqref{froz2Db}, \eqref{froz2Dd}, the coefficient $\hat{a}_0$ plays no role for the existence/nonexistence of roots $s$ giving ill-posedness. Therefore, we first assume that $\hat{a}_0=0$. Then, ignoring the neutral mode $s=0$, from \eqref{95} we obtain
\begin{equation}\label{96}
s^2=-\hat{r}^2+\frac{\hat{a}}{n}.
\end{equation}
Recall that $\omega =1$, i.e. $\omega'\neq  0$ (for the 1D case $\omega' =0  $ we get $\lambda =0$). Then $\hat{r}=0$ if and only if $\hat{w}_1=\hat{w}_2=\hat{w}_3=0$, i.e., when each vector $(\widehat{F}_{2j},\widehat{F}_{3j})$ for $j=1,2,3$ is either perpendicular to $\omega'$ or zero. In other words, $\hat{r}=0$ for some $\omega'$ if and only if the three vectors $(\widehat{F}_{2j},\widehat{F}_{3j})$ , $j=1,2,3$, are collinear, i.e.,  \eqref{collin-fr} holds.

If $\hat{r}\neq 0$, then the right-hand side in \eqref{96} is negative for large $n$, and we have no ``unstable'' roots (with  $\Re s>0$) for this case. If \eqref{collin-fr} holds, we choose $\omega'$ such that $\hat{w}_1=\hat{w}_2=\hat{w}_3=0$. Then, from \eqref{96} we find the root
\[
s=\frac{\sqrt{\hat{a}}}{\sqrt{n}}
\]
giving an ill-posedness example if and only if \eqref{antiRT} holds.

Let us now $\hat{a}_0\neq 0$. Expanding \eqref{96} in powers of $1/\sqrt{n}$, for $\hat{r}\neq 0$ we find the tree roots
\[
s=\mathcal{O}\left(\frac{1}{n}\right)\quad\mbox{and}\quad s=\pm i\hat{r}+\mathcal{O}\left(\frac{1}{n}\right)
\]
for which $\Re s$ is of order $1/n$. Even if $\Re s>0$, this does not give an infinite growth of the exponential solutions \eqref{exp_sol} for a fixed time $t> 0$ as $n\rightarrow \infty$. For collinear vectors $(\widehat{F}_{2j},\widehat{F}_{3j})$ , $j=1,2,3$, we again choose $\omega'$ such that $\hat{w}_1=\hat{w}_2=\hat{w}_3=0$, i.e., $\hat{r}=0$. Then, ignoring the neutral mode $s=0$, from \eqref{95} we obtain equation \eqref{eq_s} which has root \eqref{root_s} giving an ill-posedness example if and only if \eqref{antiRT} holds. That is, we have proved Theorem \ref{t3} for incompressible elastic fluids.

\subsection{Proof of Theorem \ref{t3}}

As for problem \eqref{frozen'}, \eqref{frozen_bound'}, we first exclude $F_j$ from system \eqref{frozen}:
\begin{equation}\label{97}
\frac{1}{\hat{\rho}\hat{c}^2}\,\partial_t p+{\rm div}\,v =0,\quad
\hat{\rho} \biggl(\partial_t^2-\sum_{j=1}^3\mathcal{L}_j^2\biggr) v +\nabla\partial_tp=0, \quad\mbox{in}\ \mathbb{R}_+\times \mathbb{R}^3_+.
\end{equation}
Moreover, we reduce the boundary conditions \eqref{frozen_bound} to \eqref{froz2Dd'}. In fact, we can easily obtain the dispersion relation from the following consequence of \eqref{97}:
\[
\biggl(\partial_t^2 -\hat{c}^2\triangle -\sum_{j=1}^3\mathcal{L}_j^2 \biggr)\partial_tp=0.
\]
Substituting \eqref{exp_sol} into the last equation and ignoring the neutral mode $s=0$, we get the dispersion relation
\[
\frac{s^2}{\hat{c}^2}+\frac{\hat{r}^2}{\hat{c}^2}-\lambda^2+1=0,
\]
giving the root
\begin{equation}
\lambda = -\sqrt{1+\left(\frac{\hat{r}}{\hat{c}}\right)^2+\left(\frac{s}{\hat{c}}\right)^2},
\label{98}
\end{equation}
where without loss of generality we again assumed that $\omega =1$, and $\hat{r}$ is the same as for incompressible fluids above.

By virtue of \eqref{98}, from the second equation in \eqref{97} we have
\begin{equation}
\bar{v}_{1n}=\frac{\bar{p}_{n}s}{\hat{\rho}(s^2+\hat{r}^2)}\,\sqrt{1+\left(\frac{\hat{r}}{\hat{c}}\right)^2+\left(\frac{s}{\hat{c}}\right)^2}.
\label{99}
\end{equation}
Substituting \eqref{exp_sol} into the boundary condition \eqref{froz2Dd'} and using \eqref{99}, we get for the constant $\bar{p}_{n}$ the equation
\[
(ns-\hat{a}_0)\bar{p}_{n}-\frac{\hat{a}s}{\hat{\rho}(s^2+\hat{r}^2)}\,\sqrt{1+\left(\frac{\hat{r}}{\hat{c}}\right)^2+\left(\frac{s}{\hat{c}}\right)^2}\,\bar{p}_{n}=0
\]
which has a non-zero solution if
\begin{equation}
\label{100}
\hat{\rho}(ns-\hat{a}_0)(s^2+\hat{r}^2)-\hat{a}s\sqrt{1+\left(\frac{\hat{r}}{\hat{c}}\right)^2+\left(\frac{s}{\hat{c}}\right)^2}=0.
\end{equation}

Expanding \eqref{100} in powers of $1/\sqrt{n}$, for $\hat{r}\neq 0$ we find the tree roots
\[
s=\mathcal{O}\left(\frac{1}{n}\right)\quad\mbox{and}\quad s=\pm i\hat{r}+\mathcal{O}\left(\frac{1}{n}\right)
\]
for which $\Re s$ is of order $1/n$. As was noted above, this cannot give ill-posedness. For collinear vectors $(\widehat{F}_{2j},\widehat{F}_{3j})$ , $j=1,2,3$, we choose $\omega'$ such that $\hat{r}=0$. Then, ignoring the neutral mode $s=0$, from \eqref{100} we obtain the equation
\begin{equation}
\label{101}
\hat{\rho}s(ns-\hat{a}_0)-\hat{a}\sqrt{1+\left(\frac{s}{\hat{c}}\right)^2}=0.
\end{equation}
Assuming that $\hat{a}\neq 0$ and expanding \eqref{101} in powers of $1/\sqrt{n}$, we find the root
\[
s=\sqrt{\frac{\hat{a}}{\hat{\rho}}}\,\frac{1}{\sqrt{n}}+\mathcal{O}\left(\frac{1}{n} \right)
\]
which gives ill-posedness if and only if $\hat{a}<0$, i.e., \eqref{antiRT} holds. The proof of Theorem \ref{t3} is thus complete.

\section{Open problems}
\label{s6}

It is clear that the results of Theorems \ref{t01} and \ref{t02} for the reduced nonlinear problem in a fixed domain can be reformulated for the original free boundary problem. We have thus proved the local-in-time existence of a unique smooth solution of the free boundary problem \eqref{7}, \eqref{4} under the hyperbolicity conditions \eqref{11} and suitable compatibility conditions for the initial data, provided that either the Rayleigh-Taylor sign condition \eqref{5} or the non-collinearity condition \eqref{6} holds. At the same time, it seems that the most general ``stability'' assumption for the initial data should only require the fulfilment of  the Rayleigh-Taylor sign condition at all those points of $\Gamma (0)$ where the non-collinearity condition fails. But even for the linearized problem the corresponding well-posedness result under the assumption that the inequality in \eqref{42} holds at all those points of $\Omega_T$  where the inequality in \eqref{40} fails is yet an open problem.

Moreover, regarding the non-collinearity condition \eqref{42} for the linearized problem, formally the requirement that the front symbol is elliptic, i.e., the boundary conditions \eqref{b51b} supplemented with relations \eqref{59} for $j=1,2,3$ are  resolvable for $\nabla_{t,x'}\varphi$, is equivalent to the assumption
\[
{\rm rank}
\begin{pmatrix}
0 & 0 & 0\\
\widehat{F}_{21} & \widehat{F}_{22}& \widehat{F}_{23}\\
\widehat{F}_{31} & \widehat{F}_{32}& \widehat{F}_{33}
\end{pmatrix}=2\quad\mbox{on}\ \partial\Omega_T.
\]
It is still unclear how to deduce an a priori estimate under this most general ``non-collinearity'' assumption which is less restrictive than \eqref{42}. This is also an open problem for future research.

\end{document}